\newtheorem{theorem}{Theorem}[section]
\newtheorem{lemma}[theorem]{Lemma}
\newtheorem{proposition}[theorem]{Proposition}
\newtheorem{definition}[theorem]{Definition} \newtheorem{corollary}[theorem]{Corollary}
\newtheorem{remark}[theorem]{Remark} 
\newenvironment{proof}[1][Proof]{\removelastskip\vskip12pt
plus 1pt \noindent\em {\bf #1. }\rm}{\hfill$\Box$ \vskip.2cm}
\newcommand{\R}{\mathbb{R}}
\renewcommand{\d}{\,\mathrm{d}}
\definecolor{mauve}{rgb}{0.5,0,0.5}
\renewcommand{\leq}{\leqslant}
\renewcommand{\geq}{\geqslant}
\renewcommand{\le}{\leqslant}
\renewcommand{\ge}{\geqslant}
\renewcommand{\L}{\mathrm{L}}
\newcommand{\C}{\mathrm{C}}
\newcommand{\e}{\,\mathrm{e}\,}
\newcommand{\loc}{\mathrm{loc}}
\newcommand{\supp}{\mathop{\rm supp}}
\begin{document}

\begin{center}
{\Large \textbf{Unbounded solutions of the nonlocal heat equation}}\\
\vspace*{0.5in}
{\large
\sc Cristina Br\"andle\footnote{\noindent  Departamento de Matem\'{a}ticas, U.~Carlos III de Madrid,
28911 Legan\'{e}s, Spain\\e-mail: {\tt cristina.brandle@uc3m.es}} \& Emmanuel Chasseigne\footnote{Laboratoire de Math\'{e}matiques et Physique Th\'{e}orique, U.~F. Rabelais, Parc de Grandmont, 37200 Tours, France\\
email: {\tt emmanuel.chasseigne@lmpt.univ-tours.fr}}
\& Ra\'ul Ferreira\footnote{Departamento de , U.~Complutense de Madrid,
280xx Madrid, Spain\\
email: {\tt raul$\underline{\ }$ferreira@mat.ucm.es}}
}
\end{center}
\vspace*{0.5in}

\begin{abstract}
\noindent
We consider the Cauchy problem posed in the whole space for the
following nonlocal heat equation: $ u_t = J\ast u -u\,,$
where $J$ is  a symmetric continuous probability density. Depending on the tail of $J$,
we give a rather complete picture of the problem in optimal classes of data by:
$(i)$ estimating the initial trace of (possibly unbounded) solutions;
$(ii)$ showing existence and uniqueness results in a suitable class;
$(iii)$ giving explicit unbounded polynomial solutions.
\end{abstract}

\

\noindent \textbf{Keywords:} Non-local diffusion, initial trace, optimal classes of data

\noindent \textbf{Subject Classification:} 35A01, 35A02, 45A05

\

\section{Introduction}
\label{sect:intro}

This paper is concerned with the Cauchy problem in optimal classes of data for the
following nonlocal version of the heat equation:
\begin{equation}\label{eq:0}
 u_t = J\ast u -u,\quad(x,t)\in\R^N\times\R_+\,.
\end{equation}
Here, $J:\mathbb{R}^N\to\mathbb{R}$ is  a symmetric continuous probability density
and $f\ast g$ stands for the convolution of functions $f$ and $g$. Throughout the paper, the initial data
\begin{equation}
  \label{eq:initial.data}
  u(x,0)=u_0(x),\quad x\in\R^N,
\end{equation}
are only assumed to be locally bounded (and not necessarily nonnegative).

This equation and some of its variants have been studied by a number of authors recently
and in various directions (see for instance \cite{ChasseigneChavesRossi07} and the references therein). These works are essentially dealing with the class of either bounded or $\L^1$ initial data.
On the contrary, the aim of this paper is to study the Cauchy problem in classes of data that are not
necessarily bounded, nor integrable, so that the ``usual'' tools (Fourier transform, fixed-point theorems)
do not work.

Let us mention first that in the case of continuous and compactly supported initial data $u_0$,
the solution of \eqref{eq:0}--\eqref{eq:initial.data} can be written as:
$$u(x,t)=\e^{-t}u_0(x)+(\omega(t)\ast u_0)(x)\,,$$
where $\omega$ is the regular part of the nonlocal heat kernel (see \cite{ChasseigneChavesRossi07} and an expansion
of $\omega$ in Section \ref{sect:def.trace}).

Now, as is well-known, in the case of the ``classical'' heat equation $u_t=\Delta u$ in $\R^N\times\R_+$,
the optimal class consists of initial data of functions $u_0$ satisfying the estimate:
\begin{equation}\label{est:local.heat}
    |u_0(x)|\leq c_0 \e^{\alpha |x|^2}\,,\quad \text{for some }\alpha,c_0>0\,.
\end{equation}
Uniqueness also holds in the class of solutions satisfying this type of estimate -- see for instance \cite{JohnBook}.
In this paper, we shall prove that nonnegative weak solutions have an initial trace which is a locally integrable function (see more precise formulations
in Propositions \ref{prop:trace}).

We would like to mention that after this paper was finished in its preprint version,
we came across a recent article by Natha\"el Alibeau and Cyril Imbert \cite{AI}, so that both
articles were written independently.
In \cite{AI}, the authors also deal with unbounded solutions of a more general equation.
They also consider other interesting aspects like regularization by the equation, that we do not
investigate here. On the contrary, they essentially treat the case of the Fractional Laplacian
while we are interested here in treating more general situations than power-type kernels.

In order to explain more in detail the main differences between \cite{AI} and the present article, let us
make several important remarks:
\begin{enumerate}
	\item Let us first mention that in this paper, we could treat singular Lévy measures $\mu$ in the equation:
	$$u_t(x,t)=\int_{\R^N}\big\{u(x+z,t)-u(x,t)\big\}\d\mu(z)\,.$$
	Indeed, the question of obtaining the optimal behaviour of initial data is only linked to the tail of $\mu$
	and not to the presence of singularities at the origin. Following the notation in \cite{AI},
	we can always decompose the Lévy measure $\mu$ as the sum of its singular part near the origin,
	and the bounded measure $\mu_b=\mu\mathbf{1}_{\{|x|>1\}}$ which conveys the
	tail information. In this paper, we only deal with the $\mu_b$-part since the singular part does not play any
	role here.	We refer to Section \ref{subsect:singular} for some discussion on that issue.
	
    \item  We would like to put emphasis on the fact that in the problem we are facing, one can divide the
	measures $\mu$ (or kernels $J$) into two categories, according to the speed of decay at infinity.
	Following \cite{BrandleChasseigne09-1,BrandleChasseigne09-2}, we say that the kernel has a \textit{slow decay}
	if $J(y)=\e^{-|y|\omega(y)}$ with $\omega(y)\to0$ as $|y|\to\infty$. On the contrary, the kernel has a
	\textit{fast decay} if $\omega(y)\to\infty$. The limit case of exponential decay is somewhat in between,
	but here it may be considered as being ``slow decay''-type.
	
    \item In the case of \textit{slow decay} kernels, using elliptic barriers
	works quite well: if one can find a function $f\geq0$ and a $\lambda>0$ such that
	$J\ast f -f\leq \lambda f$, then the function $$\psi(x,t):=\e^{\lambda t}(1+f(x))$$
	is a natural supersolution of the problem. This is essentially what is done in \cite{AI} in the case $J(y)=|y|^{-N-\alpha}$,
	for which $f(x)=|x|^{\beta}$ for any $0<\beta<\alpha$ gives rise to a supersolution. Hence the problem is at least solvable
	in the class of initial data such that $|u_0(x)|\leq c_0(1+f(x))$. We construct here other explicit barriers for
	different kernels, for instance the case of \textit{tempered $\alpha$-stable laws} (and CGMY-processes, see \cite{CarrGemanMadanYor2003})
	for which $J(y)=\e^{-|z|}/|z|^{N+\alpha}$, see Theorem~\ref{thm:3}.	
	
    \item A natural related question is to prove that the barriers actually give the optimal
	behaviour for the initial data. Although this is not stated in \cite{AI} as such, the existence result obtained there
	is optimal for power-type kernels (see Section \ref{subsect:power} below). In all the slow decaying kernels we consider,
	we prove that the explicit elliptic barriers we construct are optimal.
	
    \item On the contrary, in the case of \textit{fast decaying} kernels the elliptic barriers \textbf{do not} give the optimal
	class of initial data, which does not allow to treat those cases in full generality.
	Instead, one has to use another approach by estimating directly the non local heat kernel in a different way
	and construct other supersolutions than the ones in \cite{AI}, more adapted to the problem. We consider, the typical cases of gaussian laws and compactly supported kernels.
	
    \item We also investigate related problems like the phenomenon of blow-up in finite time, asymptotic behaviour
	of unbounded solutions and we construct explicit polynomial solutions.
\end{enumerate}

Here are now our main results.

\noindent\textsc{Fast decaying kernels --} Let us take a symmetric kernel $J$ such that the support of $J$
is exactly $B_1=\{|x|\leq1\}$. It is known that in this case, equation \eqref{eq:0} shares a lot of properties with the
local heat equation, \cite{ChasseigneChavesRossi07}. So, is the class of optimal data for~\eqref{eq:0} also given by \eqref{est:local.heat}? The answer
is no: instead of the quadratic exponential growth we get a $|x|\ln|x|$-exponential growth, but the result
is similar in its form:
\begin{theorem}\label{thm:1}
    Let $J$ be a radially symmetric kernel, supported in $B_1$, let $u_0$ be
    a locally bounded initial data and $c_0>0$ arbitrary. Then the following holds:\\[6pt]
    $(i)$ If $|u_0(x)|\leq c_0\e^{\alpha|x|\ln|x|}$ for some $0<\alpha<1$, then there exists a global solution of \eqref{eq:0}--\,\eqref{eq:initial.data}.\\[6pt]
    $(ii)$ If $u_0(x)\geq c_0\e^{\beta |x|\ln|x|}$ for some $\beta>1$, then there exists no nonnegative solution of \eqref{eq:0}--\,\eqref{eq:initial.data}.\\[6pt]
    $(iii)$ For any $0<\alpha<1$, there exists at most one solution such that $|u(x,t)|\leq C(t)\e^{\alpha|x|\ln|x|}$,
    where $C(\cdot)$ is locally bounded in $[0,\infty)$.
\end{theorem}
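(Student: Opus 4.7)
\emph{Proof sketch.} The plan is to base all three parts on the formal series
\[
\mathcal{S}(x,t):=\e^{-t}\sum_{n\ge0}\frac{t^n}{n!}\bigl(J^{*n}\ast u_0\bigr)(x),\qquad J^{*0}:=\delta_0,
\]
which is the natural candidate solution of $u_t=J\ast u-u$ with datum $u_0$. Since $\supp J\subset B_1$ one has $\supp J^{*n}\subset B_n$, so each convolution only samples $u_0$ on $B_n(x)$ and is well defined for any locally bounded datum. A direct term-by-term computation will show that, whenever $\mathcal{S}$ converges locally uniformly in $(x,t)$ together with its time derivative, it is a classical solution of \eqref{eq:0}--\eqref{eq:initial.data}; the three parts then amount to quantifying this convergence under the respective hypotheses.

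For part~$(i)$, the key estimate will be
\[
\Bigl|\frac{t^n}{n!}J^{*n}\ast u_0(x)\Bigr|\le \frac{c_0\,t^n}{n!}\,\e^{\alpha(|x|+n)\ln(|x|+n)}.
\]
Stirling turns the logarithm of the right-hand side into $(\alpha-1)n\ln n+O(n+|x|\ln n)$, which tends to $-\infty$ locally uniformly in $x$ whenever $\alpha<1$. The series and its term-by-term time-differentiated version will thus converge locally uniformly, providing a global classical solution.

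For part~$(ii)$, the first step is to observe that for any nonnegative solution $u$, truncating $u_0^R:=u_0\,\mathbf{1}_{B_R}$ yields a bounded datum whose unique bounded solution equals $\mathcal{S}_R$; comparison and monotone convergence as $R\to\infty$ then force $u(x,t)\ge\mathcal{S}(x,t)$, possibly $+\infty$. To derive a contradiction I will exploit the assumption $\supp J=\overline{B_1}$: by continuity, for every small $\eta>0$ there exist a point $\bar y$ with $|\bar y|\ge1-\eta$ and constants $\rho,c_J>0$ such that $J\ge c_J$ on $B_\rho(\bar y)$. Iterating yields $J^{*n}\ge c_J^n$ on $B_{n\rho}(n\bar y)$, whence
\[
J^{*n}\ast u_0(x)\ge c_J^n\,|B_{n\rho}|\cdot c_0\,\e^{\beta\,n(1-\eta)\ln n+O(n)},
\]
so the $n$-th term of $\mathcal{S}$ is of order $\e^{(\beta(1-\eta)-1)n\ln n+O(n)}$. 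Choosing $\eta$ small enough that $\beta(1-\eta)>1$ makes $\mathcal{S}$ diverge for every $t>0$ and every $x$, contradicting finiteness of $u$.

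For part~$(iii)$, the plan is a bootstrap on Duhamel's identity: the difference $v:=u_1-u_2$ has zero datum and satisfies $v=Kv$ with $(Kv)(x,t):=\int_0^t\e^{-(t-s)}(J\ast v(\cdot,s))(x)\,\d s$. A straightforward induction using $J\ast J^{*k}=J^{*k+1}$ gives
\[
K^n v(x,t)=\int_{0<s_n<\cdots<s_1<t}\e^{-(t-s_n)}(J^{*n}\ast v(\cdot,s_n))(x)\,\d s_n\cdots\d s_1.
\]
Bounding the integrand by $2\bar C(t)\,\e^{\alpha(|x|+n)\ln(|x|+n)}$ and using the simplex volume $t^n/n!$ reproduces exactly the bound of part~$(i)$, so $|K^n v(x,t)|\to 0$ as $n\to\infty$; since $v=K^n v$ for every $n$, this forces $v\equiv0$. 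The main obstacle lies in part~$(ii)$: one must convert the qualitative fact $\supp J=\overline{B_1}$ into an \emph{effective} lower bound on $J^{*n}$ allowing concentration near points of norm arbitrarily close to $n$, which is precisely what pins down the exponent $1$ in $\e^{\beta|x|\ln|x|}$ as the sharp threshold between $(i)$ and $(ii)$.
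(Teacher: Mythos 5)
Your plan for parts $(i)$ and $(ii)$ follows essentially the same route as the paper: the paper estimates the regular part $\omega(x,t)=\e^{-t}\sum_{n\ge1}t^nJ^{*n}(x)/n!$ of the heat kernel from above (Lemma~\ref{lem:omega.upper}) and from below (Lemma~\ref{lem:J*.lower}), then feeds in the growth of $u_0$. Your upper estimate in $(i)$ is a direct rephrasing of this. Note, however, one incorrect claim in $(ii)$: it is not true that $J^{*n}\ge c_J^n$ on all of $B_{n\rho}(n\bar y)$. Writing $J\ge c_J\mathbf{1}_{B_\rho(\bar y)}$ gives $J^{*n}\ge c_J^n\,\mathbf{1}_{B_\rho(\bar y)}^{*n}$, and the $n$-fold convolution of an indicator is a bump that vanishes continuously at the boundary of $B_{n\rho}(n\bar y)$; it is not bounded below by a constant on that growing ball. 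The correct statement, which the paper proves by induction in Lemma~\ref{lem:J*.lower} with a slab of $J$ rather than a small ball, is a bound of the type $J^{*n}(x)\ge c\mu^n$ on a set whose ``reach'' in radius is $n\sigma$ with $\sigma$ \emph{strictly less than} the support radius; equivalently one can obtain $J^{*n}\ge c\mu^n$ on a ball of \emph{fixed} radius about $n\bar y$. Either corrected form still delivers $J^{*n}\ast u_0(x)\gtrsim\mu^n\e^{\beta n(1-\eta)\ln n+O(n)}$, so the divergence of $\sum t^nJ^{*n}\ast u_0/n!$ for $\beta(1-\eta)>1$ survives; the gap is a misstatement rather than a fatal error, but as written the iteration ``$J^{*n}\ge c_J^n$ on $B_{n\rho}(n\bar y)$'' does not close.

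For part $(iii)$ your argument is genuinely different from the paper's, and arguably cleaner. The paper constructs a special supersolution $\psi(x,t)=A(\omega(t+1)\ast f_\beta)(x)$ for some $\beta\in(\alpha,1/\rho)$, shows it dominates $\e^{\beta|x|\ln|x|}$ from below (Lemma~\ref{lem:psi}), and then invokes the abstract comparison machinery of Proposition~\ref{prop:comparison} and Theorem~\ref{thm:uniqueness}, which in turn requires a regularization step (Lemma~\ref{lem:trick}) to handle merely strong solutions. Your Picard/Duhamel iteration avoids all of that: from $v:=u_1-u_2$ with zero initial data you read off $v=K^nv$ directly from the variation-of-constants formula, and the $n$-fold iterate is controlled by exactly the same term-by-term bound you used in $(i)$, with the simplex volume $t^n/n!$ doing the work of the factorial. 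This is more elementary and self-contained; what the paper's approach buys instead is a comparison principle valid for general sub/supersolutions, which it reuses in the existence proof and in other sections. Both are sound; yours is the shorter route to uniqueness alone.

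Two small points worth tightening if you write this up in full: you should say a word about why term-by-term differentiation of $\mathcal{S}$ is justified (locally uniform convergence of the differentiated series follows from the same Stirling estimate), and in $(iii)$ you should note that the Duhamel identity $v(t)=\int_0^t\e^{s-t}(J\ast v)(s)\,\d s$ holds for strong solutions in the $\L^1_\loc$ sense, which is the class in which the theorem is stated.
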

Let us mention that this theorem cannot be obtained by using elliptic barriers, since they do not give the optimal
$(|x|\ln|x|)$-behaviour. At least, for any $\alpha>0$, the function $\e^{\alpha |x|}$ is an elliptic barrier which
is enough to get uniqueness results in the class of solutions and initial data that grow at most like exponentials.
On the other hand, trying a barrier like $f(x)=\e^{\alpha|x|\ln|x|}$ leads at best to the estimate
$J\ast f-f\leq \lambda |x|^\alpha f$, which does not allow to construct a suitable supersolution $\psi$ as above.

Thus, this theorem comes from a direct estimate of the ``heat kernel'' $\omega(x,t)$, using its semi-explicit
series expansion \eqref{eq:def.omega}, see Section~\ref{sect:optimal.II}.

\noindent\textsc{Blow-up in finite time --} In the case of the usual heat equation, it is well-known that if the
initial data has a critical growth of the order of a quadratic exponential, then the corresponding solution blows up in finite time.
We prove here that it is also the case for gaussian kernels:
\begin{theorem}\label{thm:2}Let $J$ be a centered gaussian with variance $\sigma^2=1/2$, let $u_0$ be
    a locally bounded initial data and $c_0>0$ arbitrary such that
	$$u_0(x)=c_0\e^{|x|(\ln|x|)^{1/2}+f(|x|)}\quad for\ some\ \alpha<0<\beta,\ \alpha s\leq f(s)\leq \beta s\,.$$
    then the minimal solution of~\eqref{eq:0}--\eqref{eq:initial.data} blows up in finite time.
\end{theorem}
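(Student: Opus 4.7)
\emph{Proof plan.} The argument works directly with the minimal solution. For non-negative initial data it is given by the Neumann series
\[
u(x,t)=\e^{-t}\sum_{k=0}^{\infty}\frac{t^{k}}{k!}\,(J^{*k}\ast u_{0})(x),
\]
so finite-time blow-up is equivalent to divergence of this series at some point $(x_{0},T^{*})$ with $T^{*}<\infty$.

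The Gaussian assumption yields the explicit form $J^{*k}(y)=(k\pi)^{-N/2}\e^{-|y|^{2}/k}$, and the hypothesis supplies the lower bound $u_{0}(y)\ge c_{0}\e^{|y|(\ln|y|)^{1/2}+\alpha|y|}$ for $|y|$ large (since $f\ge\alpha s$). Inserting this into the series, one is led to bound from below, at a suitable $x_{0}\in\R^{N}$, the integral
\[
I_{k}(x_{0}):=\int_{\R^{N}}(k\pi)^{-N/2}\e^{\Phi_{k}(y;x_{0})}\,\d y,\qquad \Phi_{k}(y;x_{0})=-\frac{|y-x_{0}|^{2}}{k}+|y|(\ln|y|)^{1/2}+\alpha|y|.
\]

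The heart of the proof is a Laplace/saddle-point lower bound for $I_{k}$. The maximum of $\Phi_{k}(\cdot;x_{0})$ is attained on a sphere of radius of order $k(\ln k)^{1/2}$, determined by $\Phi'_{k}(r^{*})=0$; a quadratic expansion at $r^{*}$ produces both the peak value $\Phi_{k}(r^{*};x_{0})$ and the Gaussian width of the Laplace approximation. Combined with Stirling's formula $k!\sim(k/\e)^{k}\sqrt{2\pi k}$, this translates into a lower bound on the general term of the series of the form $c\,(t/T^{*})^{k}$, with a finite, explicit $T^{*}>0$ depending on $c_{0}$, $\alpha$ and $N$. The series therefore diverges for every $t>T^{*}$, which is exactly the claimed blow-up.

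The main technical obstacle is the sharpness of the saddle-point estimate: the peak value, the polynomial prefactor coming from the Gaussian width, and the Stirling factor $k!\sim\e^{k\ln k-k}$ must balance so that the Neumann term retains a geometric growth in $k$. This is the point at which one sees that $|x|(\ln|x|)^{1/2}$ is the critical growth rate for a Gaussian kernel of variance $1/2$. The two-sided sandwich $\alpha s\le f(s)\le\beta s$ is exactly what is needed to make this balance work: $\alpha<0$ provides a workable lower bound on $u_{0}$, while $\beta>0$ keeps the class of admissible data locally bounded and pins down the explicit value of $T^{*}$.
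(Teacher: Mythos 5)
Your reduction to the Neumann series and the Laplace analysis of $I_k$ is a reasonable starting point, but the decisive step --- that ``the peak value, the polynomial prefactor \ldots and the Stirling factor $k!\sim\e^{k\ln k-k}$ \ldots balance so that the Neumann term retains a geometric growth in $k$'' --- fails, and by a wide margin. Carrying out your own saddle-point computation: the critical radius of $\Phi_k(r)=-r^2/k+r(\ln r)^{1/2}+\alpha r$ solves $2r/k=(\ln r)^{1/2}(1+o(1))$, so $r_k\sim\tfrac{k}{2}(\ln k)^{1/2}$, and the peak value is
$\Phi_k(r_k)=r_k\big((\ln r_k)^{1/2}-r_k/k\big)+O(r_k)=\tfrac14\,k\ln k\,(1+o(1))$.
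Against $t^k/k!=\exp\big(k\ln t-k\ln k+k+O(\ln k)\big)$ this leaves
$\tfrac{t^k}{k!}I_k(x_0)=\exp\big(-\tfrac34\,k\ln k\,(1+o(1))\big)$,
which tends to $0$ super-geometrically for \emph{every} fixed $t>0$; the prefactors (Gaussian width, $(k\pi)^{-N/2}$, the surface measure $r_k^{N-1}$) and the linear corrections $\alpha|y|$, $|x_0|$ only contribute $\e^{O(k(\ln k)^{1/2})}$ and cannot close a gap of order $k\ln k$. So no finite $T^*$ emerges from this route: the series you write down converges for all $t$. The balance you invoke would require the peak value to equal $k\ln k+O(k)$, which for a Gaussian of variance $1/2$ happens for data growing like $\e^{2|x|(\ln|x|)^{1/2}}$, not $\e^{|x|(\ln|x|)^{1/2}}$. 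As written, the proposal therefore does not establish blow-up at the stated rate, and you should be aware that your (correct) term-by-term asymptotics are actually in tension with the theorem itself.

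The paper's proof proceeds quite differently and does not estimate the series term by term against $u_0$. It first proves a pointwise two-sided bound on the kernel (Proposition~\ref{prop:est.omega.gaussian}),
$c_1\e^{-t}\e^{-\gamma|x|(\ln|x|)^{1/2}}\e^{(c_2+\ln t)|x|}\le\omega(x,t)\le c_3\e^{-\gamma|x|(\ln|x|)^{1/2}}\e^{(c_4+\ln t)|x|}$,
where the lower bound rests not on the explicit Gaussian value $J^{\ast n}(x)$ at the saddle index (which is far too small, as your computation shows) but on the ``flat'' estimate $J^{\ast n}(x)\ge c\mu^{n}$ of Lemma~\ref{lem:J*.lower.gauss}. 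Blow-up is then read off from the convolution: with $u_0\ge c_0\e^{|y|(\ln|y|)^{1/2}+\alpha|y|}$ the $(\ln|y|)^{1/2}$-terms cancel and $\int\omega(y,t)u_0(y)\d y\ge c\int\e^{(\alpha+c_2+\ln t)|y|}\d y=\infty$ as soon as $t>\e^{-\alpha-c_2}$, while the upper bound gives convergence for $t<\e^{-\beta-c_4}$; the finite blow-up time comes entirely from the $\ln t$ in the exponent, a mechanism absent from your argument. If you wish to repair your approach you must either reproduce a kernel lower bound of this type (note that Lemma~\ref{lem:J*.lower.gauss} is applied there at an index $[K]\approx|x|/(\ln|x|)^{1/2}$, so the hypothesis $x\in B_n$ deserves scrutiny) or explain why the discrepancy with the direct saddle-point asymptotics disappears --- at present it does not.
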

For more complete results on existence, nonexistence and uniqueness, see Theorem~\ref{thm:gaussian.ex}.
We conjecture that Theorem \ref{thm:2} is also valid for compactly supported kernels with a $(|x|\ln|x|)$-critical rate
instead (and more general kernels, if not all of them), we hope to address this question in a future work.

\noindent\textsc{Optimal trace for slow decaying kernels --} We prove that the estimates given by the
elliptic barriers are optimal in a number of slow decaying cases.
Let us just illustrate this with a result concerning \textit{tempered $\alpha$-stable law}-type kernels:
\begin{theorem}\label{thm:3}
    Let $J$ be a kernel satisfying
    $$\alpha_0=\sup\Big\{\alpha>0: \int J(y)\e^{|y|}(1+|y|)^{N+\alpha}\d y< \infty\Big\}<\infty\,.$$
    Let $u_0$ be a locally bounded initial data and $c_0>0$. Then the following holds:\\[6pt]
    $(i)$ If  $|u_0(x)|\le c_0\e^{|x|}(1+|x|)^{N+\alpha}$ for $0<\alpha<\alpha_0$, then there exists a  global solution of \eqref{eq:0}--\,\eqref{eq:initial.data}.\\[6pt]
    $(ii)$  If $u_0(x)\ge c_0\e^{|x|}(1+|x|)^{N+\alpha}$ for $\alpha\ge \alpha_0$, then there exists no nonnegative solution of   \eqref{eq:0}--\,\eqref{eq:initial.data}.\\[6pt]
    $(iii)$ For any $0<\alpha<\alpha_0$ there exists at most one solution such that
    $$
    |u(x,t)|\ll c_0\e^{|x|}(1+|x|)^{N+\alpha} \mbox{ locally uniformly in }\ [0,\infty).
    $$
\end{theorem}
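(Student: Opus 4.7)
The strategy for all three parts is the elliptic barrier method sketched in the introduction, with the function $f(x)=\e^{|x|}(1+|x|)^{N+\alpha}$ itself as the natural candidate. The whole theorem should reduce to showing that $f$ satisfies $J\ast f\le K_\alpha f$ with $K_\alpha<\infty$ precisely when $\alpha<\alpha_0$, and that beyond $\alpha_0$ the tail of $u_0$ is already incompatible with the convolution structure of the right-hand side of \eqref{eq:0}.

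For part $(i)$, the key computation is to bound
$$J\ast f(x)=\int J(y)\e^{|x-y|}(1+|x-y|)^{N+\alpha}\d y.$$
Using the triangle inequality $\e^{|x-y|}\le\e^{|x|}\e^{|y|}$ together with the elementary estimate $1+|x-y|\le(1+|x|)(1+|y|)$ (a consequence of $1+a+b\le(1+a)(1+b)$), one immediately obtains
$$J\ast f(x)\le f(x)\int J(y)\e^{|y|}(1+|y|)^{N+\alpha}\d y=:K_\alpha f(x),$$
and $K_\alpha<\infty$ exactly when $\alpha<\alpha_0$. Choosing $\lambda=\max(0,K_\alpha-1)$, the function $\psi(x,t)=c_0\e^{\lambda t}(1+f(x))$ is then a supersolution. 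A solution is produced by truncating $u_0$ to bounded data $u_{0,n}$, invoking the standard well-posedness of \eqref{eq:0}--\eqref{eq:initial.data} in that setting to get $u_n$, and passing to the limit via the uniform pointwise bound $|u_n|\le\psi$.

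For part $(ii)$, I would argue by contradiction. Any nonnegative solution obeys $u_t\ge -u$, hence $u(x,t)\ge\e^{-t}u_0(x)$. For the equation to make sense at $x=0$ one needs $\int J(y)u(y,t)\d y<\infty$, but symmetry of $J$ and the lower bound on $u_0$ force this integral to be at least $c_0\e^{-t}\int J(y)\e^{|y|}(1+|y|)^{N+\alpha}\d y$, which is infinite when $\alpha>\alpha_0$ by definition of $\alpha_0$. The borderline $\alpha=\alpha_0$ (where the supremum may fail to be attained) is more delicate and is the case I expect to be the main obstacle: one should absorb it by a bootstrap exploiting the arbitrariness of $c_0$, or by showing that $u(\cdot,t)$ must in fact grow strictly faster than its initial trace $\e^{-t}u_0$ for $t>0$.

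For part $(iii)$, given two solutions $u,v$ with the same initial datum in the stated class, set $w=u-v$; then $w$ starts from zero and $|w|$ inherits the growth condition $|w|\ll\e^{|x|}(1+|x|)^{N+\alpha}$. The plan is to compare $|w|$ with the one-parameter family of supersolutions $\psi_\varepsilon(x,t)=\varepsilon\e^{\lambda t}(1+f(x))$ from part $(i)$: the ``locally uniform $\ll$'' hypothesis guarantees that, for every $\varepsilon>0$ and every compact time interval $[0,T]$, one has $|w|\le\psi_\varepsilon$ outside a compact $x$-region, and the nonlocal comparison principle propagates this bound inside. Letting $\varepsilon\to0$ forces $w\equiv0$. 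Beyond the $\alpha=\alpha_0$ subtlety in $(ii)$, the other delicate point I anticipate is the rigorous setup of this comparison principle at this level of generality, which will likely need a careful approximation argument (cut-offs in $x$ or truncated kernels) rather than a direct one-shot comparison on the full space.
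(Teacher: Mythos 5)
Your proposal is correct and follows the same high-level strategy as the paper (elliptic barrier $f(x)=\e^{|x|}(1+|x|)^{N+\alpha}$, supersolution $\psi=\e^{\lambda t}f$, truncation for existence, comparison with a vanishing family for uniqueness, divergence of $J\ast u_0$ for nonexistence), but two of your intermediate steps are genuinely different and worth noting. For the barrier inequality, you combine $\e^{|x-y|}\le\e^{|x|}\e^{|y|}$ with the submultiplicativity bound $1+|x-y|\le(1+|x|)(1+|y|)$ to get $J\ast f\le K_\alpha f$ in one line with the sharp constant $K_\alpha=\int J\,\e^{|y|}(1+|y|)^{N+\alpha}$; the paper's Lemma~\ref{lemma.super.power-exp} instead splits the convolution over $B(0,|x|)$ and its complement and lands on the coarser constant $2^{N+\alpha}K_\alpha$. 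Your version is tidier and just as correct. For nonexistence, you use the elementary monotonicity $\partial_t(\e^t u)=\e^t J\ast u\ge 0$, which gives $u(\cdot,t)\ge\e^{-t}u_0$, and then note $J\ast u(\cdot,t)=\infty$; the paper instead invokes Corollary~\ref{cor:est.trace}, obtained from the minimal-solution representation $\e^{-t}u_0+\omega(t)\ast u_0$, which yields the a priori stronger conclusion $J^{\ast n}\ast u_0<\infty$ for every $n$. For this theorem only $n=1$ is needed, so your route is more economical; the paper's version is what you would want if you ever needed to iterate the moment condition. Two cautionary points: at $\alpha=\alpha_0$ both your argument and the paper's quietly use that the critical integral $\int J\,\e^{|y|}(1+|y|)^{N+\alpha_0}\d y$ diverges, which does not follow from the supremum definition alone (the paper asserts the analogous fact for power kernels ``by continuity'' without proof); and in part~(iii) your $\varepsilon\to0$ comparison is exactly what the paper formalizes in Proposition~\ref{prop:comparison} and Theorem~\ref{thm:uniqueness}, which require the convolution-regularization trick of Lemma~\ref{lem:trick} and the first-touching-point argument -- more machinery than a simple cut-off in $x$, so it is good that you flagged it.
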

This theorem applies for $J(y)=\e^{-|y|}/(1+|y|)^{N+\alpha}$, but also with some adaptations
to the singular case $J(y)=\e^{-|y|}/|y|^{N+\alpha}$, as we mentioned at the beginning of the introduction (see also
Section~\ref{subsect:singular}).

\noindent\textsc{Asymptotic behaviour --} It is well-known that if the initial data is integrable and bounded, then
$u(t)$ converges to zero as $t\to\infty$ (see \cite{ChasseigneChavesRossi07}). But if we consider solutions
given by an initial data such that $u_0(x)\to+\infty$ as $|x|\to\infty$, what can we get as asymptotic behaviour?
We prove here that there exist explicit polynomial solutions which are similar to the ones for the local heat equation.
Moreover, these solutions tend to $+\infty$ as $t\to\infty$. For instance:

\begin{theorem}\label{thm:4}
    Let $J$ be a kernel with finite second-order momentum $m_2>0$. Then
    if $u_0=|x|^2$, the solution  of~\eqref{eq:0}--\,\eqref{eq:initial.data} has the explicit form
    \[
    u(x,t)=|x|^2+m_2t.
    \]
    More generally, if $J$ has a momentum of order $2p$ and  $u_0=|x|^{2p}$, then
    $$u(x,t)=|x|^{2p}+m_{2p}t^p+o(t^p)\quad as\quad t\to\infty\,.$$
\end{theorem}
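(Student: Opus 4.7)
For the first assertion, the ansatz $\psi(x,t) := |x|^2 + m_2 t$ is verified by direct substitution. Expanding $|x-y|^2 = |x|^2 - 2x\cdot y + |y|^2$, integrating against $J(y)\d y$, and using $\int J(y)\d y = 1$, $\int J(y)\,y_i\,\d y = 0$ for each $i$ (by evenness of $J$), and $\int J(y)|y|^2\,\d y = m_2$, one obtains $(J\ast |x|^2)(x) - |x|^2 = m_2$. Hence $\psi_t = J\ast\psi - \psi$ and $\psi(\cdot,0) = u_0$. Since $\psi$ has polynomial growth, the uniqueness assertions of the previous sections (e.g., the exponential-barrier argument underlying Theorem~\ref{thm:3}) identify $\psi$ with the Cauchy solution.

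For the general case, the plan is to construct a solution that is a polynomial in $t$ with polynomial coefficients in $x$. The key algebraic fact is that the operator $L := J\ast\cdot - \mathrm{Id}$ maps every polynomial of degree $2p$ to a polynomial of even degree at most $2p-2$: writing $|x-y|^{2p} = \sum_{|\alpha|+|\beta|=2p} c_{\alpha,\beta}\,x^\alpha y^\beta$ and integrating against $J(y)\d y$, the contribution of any $\beta$ with $|\beta|$ odd vanishes (since $\int J(y) y^\beta \d y = (-1)^{|\beta|}\int J(y) y^\beta\d y$), the term with $|\alpha|=2p$ reproduces $|x|^{2p}$ and is cancelled by $-\mathrm{Id}$, and the remaining surviving terms have $|\alpha|\le 2p-2$. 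Setting $P_0(x) := |x|^{2p}$ and $P_k := \tfrac{1}{k!}L^k P_0$, each $P_k$ is a polynomial of degree $\le 2(p-k)$; in particular $P_p$ is a constant, so $LP_p=0$ automatically. Matching powers of $t$ one checks that
\[
U(x,t) := \sum_{k=0}^{p} P_k(x)\,t^k
\]
satisfies $U_t = LU$ with $U(x,0) = |x|^{2p}$; the polynomial growth of $U$ and the uniqueness results of the earlier sections then give $U = u$.

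For fixed $x$, the terms $P_k(x)\,t^k$ with $k<p$ are $O(t^{p-1})$, so $u(x,t) = P_p\, t^p + o(t^p)$ as $t\to\infty$, with $P_p = L^p|x|^{2p}/p!$ a constant to identify. A conceptually clean route is via the probabilistic representation $u(x,t) = \mathbb{E}\bigl[|x+X_t|^{2p}\bigr]$, where $X_t$ denotes the compound Poisson process on $\R^N$ with jump density $J$ and rate $1$; the top-order coefficient in $t$ of $\mathbb{E}[|X_t|^{2p}]$ can then be read off from the cumulant expansion of $X_t$, and the $o(t^p)$-remainder follows from the fact that the lower-order $P_k(x)$ are uniformly bounded on compact sets in $x$.

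I expect the principal obstacle to be the explicit identification of $P_p$ with $m_{2p}$: iterating $L$ on $|x|^{2p}$ generates many cross terms involving the lower-order moments $m_2,\ldots, m_{2p-2}$, and checking that only the pure $m_{2p}$-contribution survives at leading order requires careful combinatorial accounting, or equivalently the probabilistic shortcut above. The remaining ingredients — the degree-reduction lemma for $L$, the construction of the polynomial solution, uniqueness within the polynomial-growth class, and the $o(t^p)$ bound — are essentially formal once this step is settled.
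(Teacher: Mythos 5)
Your construction is the same one the paper uses: the paper writes the solution as $u=|x|^{2p}+\sum_{k=1}^{p}c_k(x)\,t^k/k$ with $c_1=J\ast|x|^{2p}-|x|^{2p}$ and $c_k=\frac{1}{k-1}(J\ast c_{k-1}-c_{k-1})$, which is exactly your $U=\sum_{k=0}^{p}P_k t^k$ with $P_k=\frac{1}{k!}L^kP_0$ after the change of notation $P_k=c_k/k$; the degree-reduction property of $L$ that you isolate as a lemma appears in the paper as the binomial expansion $J\ast|x|^{2p}-|x|^{2p}=\sum_{i=0}^{p-1}\binom{2p}{2i}m_{2(p-i)}|x|^{2i}$ preceding Theorem~\ref{thm:explicit}. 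One small correction on uniqueness: the natural barrier here is the polynomial one, $\psi=\e^{\lambda t}(1+|x|^{\gamma})$ from Lemma~\ref{lemma.super.power} under the moment condition~\eqref{eq:j:momento} (with $\gamma_0>2p$), rather than the exponential barrier of Theorem~\ref{thm:3}, which would impose exponential moments on $J$ that are not assumed.

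The step you flag as the ``principal obstacle'' --- identifying the constant $P_p=\frac{1}{p!}L^p|x|^{2p}$ with $m_{2p}$ --- is a genuine issue, and you are right not to claim it: the identification is in fact false for $p\ge 2$. Iterating $L$ shows that $P_p$ is a polynomial in the lower-order moments whose leading contribution comes from $m_2^p$, not from $m_{2p}$; e.g.\ in one dimension $L|x|^4=6m_2|x|^2+m_4$ and $L^2|x|^4=6m_2^2$, so $P_2=3m_2^2$ (the paper's own worked example for $u_0=|x|^4$ displays a $t^2$-coefficient built from $m_2$, not $m_4$). The paper does not prove the asymptotic claim either: it is asserted in a one-line remark (which, inconsistently, even drops the coefficient altogether). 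So the honest conclusion is that your proof establishes the first assertion and the existence, uniqueness and polynomial form in the second, and that the remaining claim $u=|x|^{2p}+m_{2p}t^p+o(t^p)$ should be read as $u=|x|^{2p}+P_p\,t^p+o(t^p)$ with $P_p=\frac{1}{p!}L^p|x|^{2p}$ an explicit constant depending on $m_2,\dots,m_{2p}$; your proposed probabilistic route via the compound Poisson representation is a sound way to compute that constant, but it will not return $m_{2p}$.
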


To end this introduction, let us mention that more general  nonexistence result for changing sign solutions can be proved provided the negative
(or positive) parts of the initial data and the solution are controlled, see Remark~\ref{rem.nonex}.
Also, similar results are easily obtained for only locally integrable data,	provided they satisfy the various estimates for
$|x|$ large. Indeed, this is just a matter of decomposing $u_0$ as the sum of a locally bounded function and an integrable function for which we can solve the problem	separately, using the linearity of the equation.

Finally, let us mention that the bounds we obtain are related to the large deviations estimates
given in \cite{BrandleChasseigne09-1,BrandleChasseigne09-2}, because in both cases we are somehow estimating the
nonlocal heat kernel of the equation, see also Section~\ref{sect:extension}.

\textbf{Organization --}
Section~\ref{sect:def.trace} shows that the initial trace exists for nonnegative solutions
and give some basic regularity results.
Then we show in Section~\ref{sect:ex.un} general existence and uniqueness results provided there exists
a positive supersolution of the problem.
Section \ref{sect:optimal} is devoted to apply this to various examples of slow decaying
kernels, and prove that we obtain optimal existence and uniqueness results. In Section~\ref{sect:optimal.II} we deal with fast decaying kernels.
In Section \ref{sect:explicit} we construct explicit polynomial solutions
which tend to $+\infty$ as $t\to\infty$ if the initial data is unbounded.
Finally we end with a section of comments, further results
and possible extensions.

\section{Preliminaries}
\label{sect:def.trace}
\setcounter{equation}{0}

First of all let us fix some notation that will be used in what follows:

\begin{itemize}
  \item[$(i)$] We denote by $B_r$ be the ball of radius $r$ centered at $0$ and by $\chi_r$ its characteristic function.
  \item[$(ii)$] Throughout the paper, $J:\R^N\to\R$ is a nonnegative symmetric continuous function such that $\int_{\R^N} J(y)\d y=1$;
the notation $J^{\ast n}$ denotes the convolution of $J$ with itself $(n-1)$-times, $n$ counting the number of $J$'s, so that
by convention $J^{\ast 0}=\delta_0\,,\ J^{\ast 1}=J$.
    \item[$(iii)$] If $\tau:\R^N\to\R$ is a smooth, nonnegative and compactly supported in $B_1$ function, such that $\int\tau=1$,
then $\tau_n$ stands for the resolution of the identity given by $\tau_n(x)=n^N\tau(nx)$.
    \item[$(iv)$] If $f,g:\R^N\to\R$ we write $f\ll g$ if $f(x)=g(x)\epsilon(x)$ with $\epsilon(x)\to0$ as $|x|\to\infty$. But moreover
we extend it as follows for time-depending functions:
\end{itemize}

\begin{definition}\label{def:neg.loc.uniform}
    If $u,v:\R^N\times[0,\infty)\to\R$, we say that $u\ll v$ locally uniformly in $[0,\infty)$ if
    $$|u(x,t)|\leq C(t)\epsilon(x)|v(x,t)|\,,$$
    for some $C\in\L^\infty_{\rm loc}([0,\infty))$ and $\epsilon(x)\to0$ as $|x|\to\infty$\,.
\end{definition}
This definition will be useful in the construction of a supersolution $\psi$ which will tend to $+\infty$ as $t\to\infty$. Notice that definition does not require  to have a uniform control for all $t\geq0$.

\subsection{The nonlocal heat kernel}

It is shown in~\cite{ChasseigneChavesRossi07} that
if both the initial data and its Fourier transform are integrable,
then the solution to~\eqref{eq:0}--\eqref{eq:initial.data} is unique and given by:
\begin{equation}
  \label{eq:u.chacharo}
  u(x,t)=\e^{-t}u_0(x)+\big(\omega(t)\ast u_0\big)(x)\,,
\end{equation}
where $\omega$ is smooth. Of course, this is true for instance if the initial data is in $\L^2(\R^N)$ or bounded and compactly supported
(we shall use this result in various constructions in the sequel). The following lemma collects some basic properties of the regular part
of the nonlocal heat kernel:

\begin{lemma}\label{lem:omega}
	The function $\omega$ is given by
	\begin{equation}\label{eq:def.omega}
    	\omega(x,t)=\e^{-t}\sum_{n=1}^\infty\frac{t^nJ^{\ast n}(x)}{n!}\,.
	\end{equation}
	Moreover, $\omega$ is a solution of the following problem:
	\begin{equation}\label{eq:omega}
	\begin{cases}
	\omega_t=J\ast\omega-\omega+\e^{-t}J\,,\\
	\omega(x,0)=0\,.
	\end{cases}
	\end{equation}
\end{lemma}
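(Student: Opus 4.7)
The plan is to first establish the series representation (2.3) and then verify the PDE (2.4) by direct termwise computation. Convergence of the series is the only analytic subtlety, and it reduces to a one-line application of Young's inequality: since $\|J\|_1=1$, one has $\|J^{\ast n}\|_\infty\le\|J\|_\infty\|J\|_1^{n-1}=\|J\|_\infty$ for every $n\ge 1$, so the series $\sum_{n\ge 1}t^nJ^{\ast n}(x)/n!$ and all its $t$-derivatives converge absolutely and uniformly on $\mathbb{R}^N\times[0,T]$ for every $T>0$. This single bound legitimises every interchange needed below.

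For the formula itself, the cleanest route is the Fourier transform. Applied to \eqref{eq:0} with, say, an $\L^1\cap\L^2$ datum, it reduces the equation to the scalar ODE $\hat u_t=(\hat J(\xi)-1)\hat u$, whose solution is $\hat u(\xi,t)=e^{-t}e^{t\hat J(\xi)}\hat u_0(\xi)$. Separating off the $n=0$ term in the power-series expansion of $e^{t\hat J}$ gives
$$\hat u(\xi,t)=e^{-t}\hat u_0(\xi)+e^{-t}\sum_{n\ge 1}\frac{t^n\hat J(\xi)^n}{n!}\hat u_0(\xi).$$
Since $\widehat{J^{\ast n}}=\hat J^{\,n}$ by the convolution theorem, inverting term by term (justified by the uniform bound above) and comparing with \eqref{eq:u.chacharo} identifies the regular part of the heat kernel as the stated series (2.3).

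To verify (2.4), I would differentiate the series in $t$ termwise. The factor $e^{-t}$ produces the term $-\omega$, and shifting the summation index $n\mapsto m+1$ in the remaining sum gives
$$\omega_t=-\omega+e^{-t}J+e^{-t}\sum_{m\ge 1}\frac{t^m J^{\ast(m+1)}}{m!}.$$
Writing $J^{\ast(m+1)}=J\ast J^{\ast m}$ and commuting $J\ast$ with the infinite sum rewrites the last term as $J\ast\omega$, which is the desired equation. The initial condition $\omega(\cdot,0)=0$ is immediate, since every term in the series carries a positive power of $t$.

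The only step needing real care is the justification of termwise $t$-differentiation and of exchanging $J\ast$ with the infinite sum; both reduce to dominated convergence using the explicit majorant $t^n\|J\|_\infty/n!$ provided by Young's inequality, so there is no genuine obstacle once that bound is in hand.
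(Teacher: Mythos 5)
Your proposal follows essentially the same route as the paper: Fourier transform to identify the series representation of $\omega$, then termwise differentiation with an index shift to verify the PDE, and inspection of the series to get $\omega(\cdot,0)=0$. The only difference is cosmetic — you justify convergence via the $\L^\infty$ bound $\|J^{\ast n}\|_\infty\le\|J\|_\infty$ from Young's inequality, whereas the paper records the $\L^1$ bound $\|J^{\ast n}\|_1=1$; both are immediate and serve the same purpose, and if anything your choice is the more directly useful one for legitimising the termwise manipulations.
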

\begin{proof}
	If $u_0$ and its Fourier transform are integrable, we can write the solution in frequency variables as follows:
	$$
	 \hat{u}(\xi,t)=\e^{-t}\Big(1+\sum_{n=1}^\infty\frac{t^n}{n!}\hat{J}^n(\xi)\Big)\hat u_0(\xi)\,.
	$$
	Hence, going back to the original variables we get
	$$
    u(x,t)= \e^{-t}\Big(\delta_0(x)+\sum_{n=1}^\infty\frac{t^n}{n!}J^{*n}(x)\Big)*u_0(x)
    =\e^{-t}u_0(x)+\big(\omega(t)\ast u_0\big)(x)\,,
	$$
	where $\omega$ is given by~\eqref{eq:def.omega}. Notice that since $\|J^{\ast n}\|_1=\|J\|_1=1$\,, then the series converges in $\L^1$.
	Moreover, a direct computation shows that $\omega(x,0)=0$ (in the continuous sense) and
	$$\begin{aligned}
    \omega_t & =-\omega+\e^{-t}\Big(J+\sum_{n=2}^\infty\frac{nt^{n-1}J^{*n}}{n!} \Big)=
	-\omega+\e^{-t}\Big(J+\sum_{n=1}^\infty\frac{t^{n}J^{*(n+1)}}{n!} \Big)\\
	& =-\omega+\e^{-t}\Big(J+J*\sum_{n=1}^\infty\frac{t^{n}J^{*n}}{n!} \Big)=J\ast\omega -\omega+\e^{-t}J\,.
	\end{aligned}$$
	This ends the lemma.
\end{proof}

\subsection{Weak and Strong Solutions}

Let us now specify what are the notions of solution that we use throughout the paper:

\begin{definition}\label{def:weak}
    A weak solution of \eqref{eq:0} is a function $u\in\L^1_{\rm loc}(\R^N\times\R_+)$
    such that the equation holds in the sense of distributions.
\end{definition}

We may also, as usual, consider strong and classical solutions:
\begin{definition}\label{def:strong}
    Let $u_0\in \L^1_\loc(\R^N)$.

    \noindent (i) A strong solution of \eqref{eq:0} is a function $u\in \C^0\big([0,\infty);\L^1_{\rm loc}(\R^N)\big)$
    such that $u_t,J\ast u\in \L^1_\loc(\R^N\times\mathbb{R}_+\big)$, the equation
    is satisfied in the $\L^1_\loc$-sense and such that $u(x,0)=u_0(x)$
    almost everywhere in $\R^N$.

    \noindent (ii) A classical solution of \eqref{eq:0} is a solution such that moreover
    $u,u_t,J\ast u\in \C^0(\R^N\times[0,\infty))$ and the equation holds in the classical sense
    everywhere in $\R^N\times[0,\infty)$.

    \noindent (iii) A sub or supersolution is defined as usual with inequalities instead of equalities
    in the equation and for the initial data.

\end{definition}

The following result proves that in fact, nonnegative weak solutions are strong. This will allow us
to consider only strong solutions in the rest of the paper, even for changing sign solutions which will
be constructed as the difference between two nonnegative solutions. But let us also mention here
that actually, the same result will be valid for changing sign solutions provided they belong to the uniqueness
class, see Section \ref{sect:extension}.

\begin{proposition}\label{prop:trace}
    Let $u$ be a nonnegative weak solution of \eqref{eq:0}. Then $u$ has an initial trace $u(x,0^+)$ which
    is a nonnegative $\L^1_\loc$ function, and $u$ is a strong solution of \eqref{eq:0}.
    If moreover $u(x,0^+)$ is continuous, then $u$ is in fact a classical solution.
\end{proposition}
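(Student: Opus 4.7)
The plan is to exploit the positivity structure of the equation by passing to the auxiliary function $v(x,t):=\e^t u(x,t)$. Multiplying the distributional equation by $\e^t$ gives $v_t=\e^t(J\ast u)$ in $\mathcal{D}'(\R^N\times(0,\infty))$, and the right-hand side is a nonnegative element of $\L^1_{\loc}$ (convolution of the $\L^1$ function $J$ with the nonnegative $\L^1_{\loc}$ function $u$). Therefore, by the standard fact that a distribution on an interval whose derivative is a nonnegative measure agrees a.e.\ with a nondecreasing function, for a.e.\ $x\in\R^N$ the slice $t\mapsto v(x,t)$ admits a nondecreasing representative on $(0,\infty)$. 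This monotonicity is the backbone of everything that follows.

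As a first consequence, for a.e.\ $x$ the limit $u_0(x):=\lim_{t\to 0^+}v(x,t)=\inf_{t>0}v(x,t)$ exists in $[0,+\infty]$. Choosing any $t_0>0$ for which $u(\cdot,t_0)\in\L^1_{\loc}$ (possible by Fubini), the bound $0\le u_0(x)\le \e^{t_0}u(x,t_0)$ shows that $u_0\in\L^1_{\loc}(\R^N)$. Integrating the identity $v_t=\e^t(J\ast u)$ from $0^+$ to $t$ and invoking monotone convergence then yields the Duhamel representation
\begin{equation*}
u(x,t)=\e^{-t}u_0(x)+\e^{-t}\int_0^t \e^r (J\ast u)(x,r)\,\d r.
\end{equation*}
From this formula, classical continuity properties of translations in $\L^1_{\loc}$ together with continuity of the time integral give $u\in\C^0\big([0,\infty);\L^1_{\loc}(\R^N)\big)$ with $u(\cdot,0)=u_0$, while termwise differentiation gives $u_t,J\ast u\in\L^1_{\loc}$ and recovers the equation in the $\L^1_{\loc}$ sense; hence $u$ is a strong solution.

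For the classical statement, suppose $u_0$ is continuous. Since $J$ is continuous and $u(\cdot,r)$ is controlled by the monotone majorant $\e^{t_0-r}u(\cdot,t_0)\in\L^1_{\loc}$ for $r\le t_0$, a dominated convergence argument shows that $(J\ast u)(\cdot,r)$ is continuous in $x$ for each $r$, and the same majorant allows Fubini to upgrade this to joint continuity of the integral term in the Duhamel formula. Consequently $u$ is continuous on $\R^N\times[0,\infty)$, and substituting back into the equation yields continuity of $u_t$ and of $J\ast u$, so $u$ is a classical solution. The main technical point I expect is the first step, namely rigorously deducing the a.e.\ monotonicity of $v(x,\cdot)$ from the sole distributional inequality $v_t\ge 0$, and then selecting good representatives consistently so that the pointwise bounds, the Duhamel formula, and the continuity arguments of the last step all refer to the same version of $u$.
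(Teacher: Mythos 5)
Your proof is correct and follows essentially the same route as the paper: the substitution $v=\e^t u$, the observation that $v_t=J\ast v\ge 0$ gives monotonicity in $t$ and hence a trace, the bound $0\le v(x,0^+)\le \e^{t_0}u(x,t_0)\in\L^1_\loc$, the integrated (Duhamel) identity to deduce $J\ast u,\,u_t\in\L^1_\loc$, and the same continuity bootstrap for the classical case. If anything, you are more explicit than the paper about passing from the distributional inequality $v_t\ge0$ to a.e.\ pointwise monotonicity of the time slices, which is a worthwhile precision.
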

\begin{proof}
    We consider the auxiliary function $v(x,t):=\e^t u(x,t)$ which satisfies:
    $$ \partial_t v(x,t)=J\ast v\ge 0\,.$$
    This proves that the limit $v(x,0^+)$ is defined.
    Now, since $u\in\L^1_\loc(\R^N\times\R_+)$, then $u(\cdot,t)\in\L^1_\loc(\R^N)$ for almost
    any $t>0$, so that for such a time,
    $$0\leq u(x,0^+)=v(x,0^+)\leq v(x,t)= \e^{t}u(x,t)\in\L^1_\loc(\R^N)\,.$$
    This proves that the initial trace of $u$ is indeed a $\L^1_\loc$
    function. Even more, this also proves that  $u(\cdot,t)\in \L^1_\loc(\mathbb R^N)$ functions for all
    $t>0$. Now, we can write
\begin{equation}\label{eq:integrated.v}
    v(x,t)=v(x,0^+)+\int_0^t (J\ast v)(x,s)\d s\,,
\end{equation}
    which implies that $J\ast v$ and then $J\ast u$ are $\L^1_\loc(\R^N\times\R_+)$ functions.
    Since $u$ is already a $\L^1_\loc$ function, then $u_t$ is also a $\L^1_\loc$ function. Hence $u$ is a strong solution.
    Finally, if $u(x,0^+)=v(x,0^+)$ is continuous, then clearly by \eqref{eq:integrated.v},
    $v$ is continuous in space and time, up to $t=0$. Hence $u$ is continuous and the equation holds
    in the classical sense.
\end{proof}
The following technical trick will be useful in order to get some continuity for comparison results:

\begin{lemma}\label{lem:trick}
    Let $u$ be a strong solution to~\eqref{eq:0}--\eqref{eq:initial.data}. Then
    for any smooth and compactly supported function $\tau:\R^N\to\R$, the function
    $$u^{\tau}(x,t):=(\tau\ast u)(x,t)=\int_{\R^N}\tau(x-y)u(y,t)\d y$$
    is a classical solution of~\eqref{eq:0}--\eqref{eq:initial.data} with initial data
    $\tau\ast u_0$.
\end{lemma}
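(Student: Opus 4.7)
The plan is to transfer the regularity of $\tau$ onto $u^\tau$ via the convolution, and then use associativity of convolution to move the operator $J\ast(\cdot)-(\cdot)$ across $\tau\ast(\cdot)$.

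First I would establish spatial smoothness. Since $\tau\in\C^\infty_c(\R^N)$ and $u(\cdot,t)\in\L^1_\loc(\R^N)$ for every $t\geq 0$, standard differentiation under the integral sign yields $u^\tau(\cdot,t)\in\C^\infty(\R^N)$ for each $t$, with $\partial^\alpha_x u^\tau = (\partial^\alpha \tau)\ast u$. Because $u\in\C^0([0,\infty);\L^1_\loc)$, the map $t\mapsto u^\tau(x,t)$ is continuous for every $x$, and in fact $u^\tau$ is jointly continuous on $\R^N\times[0,\infty)$ by dominated convergence (using that $\tau$ has compact support, the integral is taken over a fixed compact set).

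Next I would handle the time derivative. Since $u_t\in\L^1_\loc(\R^N\times\R_+)$, for every test ball I can write, for almost every $t$,
\[
u^\tau(x,t)-u^\tau(x,0) = \int_0^t (\tau\ast u_s)(x,s)\d s = \int_0^t \bigl(\tau\ast(J\ast u - u)\bigr)(x,s)\d s,
\]
where I used Fubini to interchange the time integral with the convolution in $y$ and the $\L^1_\loc$-equation satisfied by $u$. By the associativity/commutativity of convolution (which holds because $\tau$ is compactly supported and $u(\cdot,s),J\in\L^1_\loc$), one has $\tau\ast(J\ast u)=J\ast(\tau\ast u)=J\ast u^\tau$. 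Thus
\[
u^\tau(x,t)-u^\tau(x,0) = \int_0^t \bigl(J\ast u^\tau - u^\tau\bigr)(x,s)\d s.
\]
The integrand on the right-hand side is now jointly continuous in $(x,s)$ (since $u^\tau$ is and $J\ast u^\tau$ inherits continuity by dominated convergence, $J$ being continuous and $u^\tau(\cdot,s)$ locally bounded), so the fundamental theorem of calculus gives $u^\tau_t = J\ast u^\tau - u^\tau$ everywhere, in the classical sense.

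Finally, the initial condition $u^\tau(x,0)=(\tau\ast u_0)(x)$ follows immediately from the strong-solution definition $u(\cdot,0)=u_0$ in $\L^1_\loc$ and the fact that convolving with $\tau$ is continuous from $\L^1_\loc$ into $\C^0$. The main (modest) obstacle is the technical justification that one may commute convolution in $y$ with the time integral and with $J\ast$; this is routine Fubini, made safe by the compact support of $\tau$ and the local integrability built into the definition of strong solution.
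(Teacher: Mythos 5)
Your proof is correct and follows essentially the same route as the paper's: commute $\tau\ast(\cdot)$ with $\partial_t$ and with $J\ast(\cdot)-(\cdot)$, then observe that the resulting function is continuous, so the equation holds classically. The paper compresses these commutations into one line (``elementary properties of the convolution''); your detour through the integrated identity and the fundamental theorem of calculus merely makes explicit the upgrade from the $\L^1_\loc$-equation to a pointwise one.
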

\begin{proof}
    Since $\tau$ is compactly supported and smooth, the convolutions $\tau\ast u_0$
    and $u^{\tau}$ are well-defined and moreover, elementary properties of the convolution show that
    $$\partial_t(u^{\tau})=\tau\ast u_t = \tau\ast(J\ast u) - \tau\ast u = J\ast u^{\tau} -u^{\tau}\,.$$
    Finally, $u^{\tau}$ is clearly continuous hence it is a classical solution
    and its initial trace is $\tau\ast u_0$.
\end{proof}
Of course, the same result is valid for sub/super solutions: since $\tau_n$ is nonnegative, the convolution
maintains the inequality.

\subsection{Supersolutions}

In order to establish a full theory of existence and uniqueness for solution of~\eqref{eq:0}--\eqref{eq:initial.data} we will use special supersolutions
$\psi$ that verify:
\begin{equation}\label{eq:def.psi}
    \begin{aligned}
    &\psi\in\mathrm{C}^0(\R^N\times[0,\infty))\,,\psi\geq0\,,\\\
    & \psi(x,t)\to+\infty\text{ as }|x|\to\infty\text{ uniformly for }t\in[0,\infty)\,,\\
    &\psi_t\geq J\ast\psi-\psi\,.
    \end{aligned}
\end{equation}

A typical way to construct such supersolutions is to use barriers, \text{i.e.,} functions
$f:\mathbb{R}^N\to \mathbb{R}$ continuous, nonnegative, with $f(x)\to\infty$ as $|x|\to\infty$ and such that for some $\lambda>0$,
\begin{equation}
  \label{eq:J.f}
  J*f-f\leq \lambda f, \qquad \lambda>0.
\end{equation}
Then $\psi(x,t):=\e^{\lambda t}f(x)$ satisfies \eqref{eq:def.psi}. For instance, if $J$ has a finite second-order momentum $m_2$,
then $f(x)=x^2$ fulfills the requirements, with $\lambda=m_2$, this is a calculus similar to the one performed in Section~\ref{sect:explicit}.

Another approach that we shall use in the case of fast decaying kernels is the following:
since $\omega$ satisfies~\eqref{eq:omega}, then it is a supersolution of equation \eqref{eq:0}. Thus, if $f$ is nonnegative and such that
the spatial convolution of $\omega$ and $f$ converges, then
the function $(x,t)\mapsto\big(\omega(t+1)\ast f\big)(x)$ is also a supersolution.

\section{Existence and uniqueness of unbounded solutions}
\label{sect:ex.un}
\setcounter{equation}{0}

\subsection{Comparison}

Comparison in the class of bounded solutions is well-known if $J$ is compactly supported. We extend  now the comparison result to more general kernels and not only for the class of bounded solutions. To do so, we need to control the points where  the maximum is attained, which is done by using the supersolution $\psi$ defined above.

\begin{proposition}\label{prop:comparison}
   	Let $\psi$ satisfy~\eqref{eq:def.psi}. Let  $\underline{u}$ be a strong subsolution of~\eqref{eq:0} and $\bar u$ a
	strong supersolution of~\eqref{eq:0} such that $\underline u(x,0)\leq \bar u(x,0)$. If $\underline u-\bar{u}\ll \psi$
	locally uniformly in $[0,\infty)$ then $\underline{u}\leq \bar u$ almost everywhere in $\R^N\times\R_+$.
\end{proposition}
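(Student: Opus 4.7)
The plan is to set $w := \underline u - \bar u$, observe that by linearity $w$ is a strong subsolution of $w_t \le J\ast w - w$ with $w(\cdot,0)\le 0$, and then argue $w \le 0$ almost everywhere by a maximum-principle-type argument applied to a doubly-perturbed, spatially regularized version of $w$.

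The key algebraic trick I intend to use is to work with the modified function $v := w - \delta\psi - \eta t$, where $\delta,\eta>0$ will be sent to zero at the end. Subtracting $\delta\psi$ forces $v(\cdot,0)\le 0$ (since $\psi\ge 0$) and, thanks to the hypothesis $\underline u-\bar u\ll\psi$ locally uniformly in $[0,\infty)$, also forces $v(x,t)\le 0$ for $|x|$ larger than some radius $R=R(\delta,T)$, uniformly on each slab $[0,T]$. Subtracting $\eta t$ upgrades the subsolution inequality into the \emph{strict} form $v_t \le J\ast v - v - \eta$. Once both ingredients are in place, a positive supremum of $v$ on $\R^N\times[0,T]$ would have to be attained at an interior point $(x_0,t_0)$ of the compact set $\overline{B}_R\times[0,T]$, where $\partial_t v\ge 0$ and $J\ast v - v = \int J(x_0-y)(v(y,t_0)-v(x_0,t_0))\d y \le 0$, contradicting the strict inequality. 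So $v\le 0$, and then letting $\eta\to 0$ followed by $\delta\to 0$ delivers $w\le 0$.

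The main obstacle I anticipate is regularity: strong solutions in the sense of Definition~\ref{def:strong} only lie in $\C^0([0,\infty);\L^1_\loc(\R^N))$, so pointwise maxima and the pointwise sign of $\partial_t v$ at $(x_0,t_0)$ are not literally defined. To circumvent this I plan to invoke Lemma~\ref{lem:trick} and carry out the whole argument on the spatially regularized functions $\underline u^{\tau_n}$, $\bar u^{\tau_n}$ and $\psi^{\tau_n}$, which are classical sub/supersolutions, applying the perturbation trick to $v_n := \underline u^{\tau_n} - \bar u^{\tau_n} - \delta\psi^{\tau_n} - \eta t$. The growth hypothesis survives convolution in a usable form because $\tau_n$ has support of diameter $O(1/n)$: for $|x|$ large enough (depending on $\delta,T,n$) the inequality $|\underline u^{\tau_n}-\bar u^{\tau_n}|(x,t)\le (\delta/2)\,\psi^{\tau_n}(x,t)$ holds on $[0,T]$, keeping $v_n\le 0$ outside a compact set. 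After deducing $v_n\le 0$, I would pass to the limit $n\to\infty$ using $\tau_n\ast f\to f$ in $\L^1_\loc$, and finally let $\eta,\delta\to 0$ to conclude $\underline u\le\bar u$ almost everywhere.
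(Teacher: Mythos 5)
Your proposal is correct and follows essentially the same route as the paper: perturb $\underline u-\bar u$ by $\delta\psi$ so that the hypothesis $\underline u-\bar u\ll\psi$ confines any positive value to a compact set, introduce a device making the differential inequality strict, reach a contradiction at an extremal point/time, and handle mere strong solutions by mollifying with $\tau_n$ via Lemma~\ref{lem:trick} before passing to the limit. The only differences are cosmetic: the paper uses the factor $\e^t$, subtracts a constant $\delta$, and tracks the first time the level $-\delta/2$ is reached, whereas you subtract $\eta t$ and evaluate the inequality at a maximum point; both are the same maximum-principle argument.
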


\begin{proof}
    We shall do the proof in two steps: we assume first that the sub and supersolutions are continuous, and then use
    a regularization procedure to extend the result.

    \noindent\textsc{Step 1 --} Assume that both  $\underline{u}$ and $\bar u$ are continuous and
    consider the function
    \[
    w^\delta(x,t)=\e^t(\underline u(x,t)-\bar u(x,t)- \delta \psi(x,t)-\delta)\,,
    \]
    with $\delta>0$. This function satisfies
    $$
    \partial_t (w^\delta)-J*w^\delta\le 0.
    $$
    Let $(x_0,t_0)$ be the first point at which $w^\delta$ reaches the level $-\delta/2$ and assume that $t_0$ is finite.
	Since $\underline u-\bar u\ll \psi$ locally uniformly, we get that $x_0<\infty$.
    Moreover, since the function $w^\delta$ is continuous and $w^\delta(x,0)\le -\delta$, then $t_0>0$.

    Notice that $J*w^\delta\le -\delta/2$ for $t\in(0,t_0)$. Then, $\partial_t (w^\delta)\le 0$ for $t\in (0,t_0)$, which is a contradiction.
    Therefore, $w^\delta\le -\delta$ for all time $t>0$. Finally taking $\delta\to 0$, we obtain the desired result.

    \noindent\textsc{Step 2 --} Now we need to relax the continuity assumption.
    To this end, we use Lemma \ref{lem:trick} with a suitable resolution of the identity $\tau_n$ (see preliminaries)
	to see that $\underline{u}_n:=\tau_n\ast \underline{u}$ is a continuous subsolution and
    similarly, ${\bar u}_n:=\tau_n\ast{\bar u}$ is a continuous supersolution. Moreover, since $\tau_n\geq0$,
    their initial data are ordered:
    $$\underline{u}_n(x,0^+)=\tau_n\ast \underline u_0\leq \tau_n\ast \bar u_0={\bar u}_n(x,0^+)\,.$$
    In order to use Step 1 for the convolutions $\underline u_n$ and $\bar u_n$, let us notice first that if $\psi$ satisfies \eqref{eq:def.psi}, then
    $\psi_n:=\tau_n\ast\psi$ is also a supersolution (again, this is just convoluting the inequation). We only have to verify that
    $\underline{u}_n-\bar u_n\ll\psi_n$ locally uniformly.

    Actually, this comes from the fact that by definition, $\underline{u}-\bar u\ll\psi$ and since we take the convolution with a nonnegative $\tau_n$,
    $$\underline{u}_n-\bar{u}_n\leq c(t)\tau_n\ast(\epsilon(\cdot)\psi(\cdot,t))(x)\leq\epsilon'(x)(\tau_n\ast\psi)(x)\,,$$
    where $\epsilon'(x)=\max\{\epsilon(y): |y-x|\leq1/n\}$ still verifies $\epsilon'(x)\to0$ as $|x|\to\infty$.
    Hence, $\underline{u}_n-\bar{u}_n\ll\psi_n$, and we apply Step 1 with $\underline{u}_n, {\bar u}_n$ and $\psi_n$,
    which yields:
    $$\underline{u}_n\leq {\bar u}_n\quad\text{in}\quad\R^N\times[0,\infty)\,.$$
    Finally we pass to the limit as $n\to\infty$. Since $\underline{u}$ and $\bar u$ are locally integrable,
    we get $\underline{u}\leq\bar u$ almost everywhere.
\end{proof}

As direct corollary, we have the following uniqueness result,
which is valid only in a suitable class of solutions, as it is the case for the local heat equation.

\begin{theorem}
\label{thm:uniqueness}
    Let  $\psi$ satisfy~\eqref{eq:def.psi}.
    Then there exists at most one strong solution $u$ of~\eqref{eq:0}--\,\eqref{eq:initial.data}
    such that $|u|\ll\psi$ locally uniformly in $[0,\infty)$.
\end{theorem}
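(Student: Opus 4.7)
The plan is to deduce the theorem as a direct consequence of the comparison principle (Proposition~\ref{prop:comparison}), applied twice with the roles of the two candidate solutions interchanged. Assume $u_1$ and $u_2$ are two strong solutions of \eqref{eq:0}--\eqref{eq:initial.data} with the same initial datum $u_0$, both satisfying $|u_i|\ll\psi$ locally uniformly in $[0,\infty)$. Since a strong solution is simultaneously a strong sub- and supersolution, and since $u_1(x,0)=u_2(x,0)=u_0(x)$ almost everywhere, the only hypothesis of Proposition~\ref{prop:comparison} that remains to be verified is that the difference $u_1-u_2$ still satisfies $u_1-u_2\ll\psi$ locally uniformly in $[0,\infty)$.

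This verification is the only piece of actual work, and it is elementary. By assumption, for $i=1,2$ there exist $C_i\in\L^\infty_{\loc}([0,\infty))$ and functions $\epsilon_i:\R^N\to\R_+$ with $\epsilon_i(x)\to0$ as $|x|\to\infty$ such that
\[
|u_i(x,t)|\leq C_i(t)\,\epsilon_i(x)\,\psi(x,t).
\]
Setting $C(t):=C_1(t)+C_2(t)\in\L^\infty_{\loc}([0,\infty))$ and $\epsilon(x):=\max(\epsilon_1(x),\epsilon_2(x))$, which still tends to $0$ at infinity, the triangle inequality gives
\[
|u_1(x,t)-u_2(x,t)|\leq C(t)\,\epsilon(x)\,\psi(x,t),
\]
so that $u_1-u_2\ll\psi$ locally uniformly in $[0,\infty)$ in the sense of Definition~\ref{def:neg.loc.uniform}.

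Applying Proposition~\ref{prop:comparison} with $\underline u=u_1$ and $\bar u=u_2$ yields $u_1\leq u_2$ a.e. in $\R^N\times\R_+$. Exchanging the roles of $u_1$ and $u_2$ and repeating the same argument gives $u_2\leq u_1$ a.e., hence $u_1=u_2$ a.e., which is the claimed uniqueness. There is no genuine obstacle: the only mildly delicate point is the observation that the class defined by $|u|\ll\psi$ locally uniformly is stable under differences, which is what allows us to feed the difference of two candidate solutions into the comparison principle.
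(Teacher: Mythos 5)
Your proof is correct and follows essentially the same route as the paper: apply Proposition~\ref{prop:comparison} twice with the roles of the two solutions exchanged, after noting that the class $\{|u|\ll\psi\}$ is stable under differences. The only difference is that you spell out the triangle-inequality verification of $u_1-u_2\ll\psi$, which the paper states without further comment.
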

\begin{proof}
	If two solutions $u$ and $v$ satisfy $|u|,|v|\ll\psi$ locally uniformly in $[0,\infty)$ then
	we have at the same time $u-v\ll\psi$ and $v-u\ll\psi$ locally uniformly in $[0,\infty)$. Using
	these solutions as sub- and supersolutions allows us to obtain $u\leq v$ and $v\leq u$ so that
	they are identical.
\end{proof}

\subsection{Construction of a solution}

\begin{theorem}\label{thm:ex}
    Let $\psi$ satisfy~\eqref{eq:def.psi}.
    Let $u_0$ be locally bounded and assume that for some $c_0>0$,
    $$|u_0(x)|\leq c_0\psi(x,0)\quad\text{in }\R^N\,.$$ Then
    there exists a solution $u$ of~\eqref{eq:0}{\rm--}\eqref{eq:initial.data}, which is given by
	$$
	u(x,t)=\e^{-t}u_0 (x)+ (\omega(t)\ast u_0)(x).
	$$
    Moreover, the following estimate holds: $|u(x,t)|\leq c_0\psi(x,t)$ in $\R^N\times\R_+$.
\end{theorem}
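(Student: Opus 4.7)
My plan is to define $u$ directly via the candidate representation from Lemma~\ref{lem:omega},
\[
u(x,t) := \e^{-t} u_0(x) + (\omega(t) \ast u_0)(x),
\]
and then verify both the pointwise bound $|u| \le c_0 \psi$ and the fact that $u$ is a solution. The central difficulty will be giving a meaning to $\omega(t) \ast u_0$ when $u_0$ can grow as fast as $\psi(\cdot, 0)$, since no integrability of $u_0$ against the nonlocal heat kernel is available a priori.

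\textbf{Step 1: controlling the convolution.} I would first bootstrap on $\psi$ itself. Setting $\psi^n := \psi(\cdot, 0)\chi_n$ (bounded and compactly supported), the function $V^n(x,t) := \e^{-t} \psi^n(x) + (\omega(t) \ast \psi^n)(x)$ is a bounded classical solution with data $\psi^n$ by the $\L^1\cap \L^\infty$ theory recalled in~\eqref{eq:u.chacharo}. Proposition~\ref{prop:comparison}, whose argument adapts here because $V^n$ is bounded while $\psi\to\infty$ uniformly in $t$ (so the auxiliary function $\e^t(V^n - \psi - \delta\psi - \delta)$ still tends to $-\infty$ at spatial infinity), yields $V^n \le \psi$. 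Since $\psi^n \nearrow \psi(\cdot, 0)$ and $\omega \ge 0$, monotone convergence then gives
\[
\e^{-t}\psi(x,0) + (\omega(t) \ast \psi(\cdot,0))(x) \le \psi(x,t),
\]
so in particular $\omega(t) \ast \psi(\cdot, 0) < \infty$ pointwise. Combined with $|u_0| \le c_0 \psi(\cdot, 0)$, this makes $\omega(t) \ast u_0$ absolutely convergent and immediately delivers $|u(x,t)| \le c_0 \psi(x,t)$.

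\textbf{Step 2: verifying the equation.} I would then repeat the truncation with $u_0^n := u_0\, \chi_n$: each $u^n := \e^{-t} u_0^n + \omega(t) \ast u_0^n$ is a bounded classical solution with data $u_0^n$. Dominated convergence with dominant $c_0\,\omega(t) \ast \psi(\cdot, 0)$ gives $u^n \to u$ pointwise, and the uniform bound $|u^n| \le c_0 \psi \in \L^1_\loc$ promotes this to $\L^1_\loc$-convergence on any finite time interval. Passing to the limit in the equation distributionally shows that $u$ is a weak solution; decomposing $u_0 = u_0^+ - u_0^-$ (both dominated by $c_0 \psi(\cdot, 0)$) and applying Proposition~\ref{prop:trace} to the nonnegative weak solutions constructed from $u_0^\pm$ will upgrade $u$ to a strong solution, completing the proof. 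The main obstacle is Step~1: giving a meaning to the convolution of $\omega$ with an unbounded function, which I resolve by the self-referential trick of using $\psi$ simultaneously as the dominant and as the initial datum and letting comparison close the loop.
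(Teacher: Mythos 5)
Your proof is correct and rests on the same machinery as the paper's: truncation of the data, the representation formula \eqref{eq:u.chacharo} for the bounded truncated problems, comparison against $\psi$ via Proposition~\ref{prop:comparison}, and passage to the limit. The one genuine difference is organizational. The paper splits $u_0=(u_0)_+-(u_0)_-$ from the outset, builds each piece as a \emph{monotone} limit of the truncated solutions, and obtains both the convergence of $\omega(t)\ast(u_0)_\pm$ and the bound $u^\pm\le c_0\psi$ simultaneously from the comparison $u_n^\pm\le c_0\psi$ plus monotone convergence. You instead first run the scheme with $\psi(\cdot,0)\chi_n$ as data to derive the key inequality $\e^{-t}\psi(x,0)+(\omega(t)\ast\psi(\cdot,0))(x)\le\psi(x,t)$; this makes $\omega(t)\ast u_0$ absolutely convergent for \emph{every} $u_0$ dominated by $c_0\psi(\cdot,0)$ in one stroke, yields the estimate $|u|\le c_0\psi$ without splitting signs, and lets you pass to the limit in the truncated problems by dominated rather than monotone convergence, reserving the positive/negative decomposition for the final appeal to Proposition~\ref{prop:trace}. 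What your version buys is a cleaner treatment of sign-changing data and an explicit justification that the stated representation formula actually converges; what the paper's version buys is monotonicity, which it exploits again in Corollary~\ref{cor:est.trace} and in the construction of the minimal solution. Both routes share the same small imprecision in invoking Proposition~\ref{prop:comparison} (its hypothesis $\underline u-\bar u\ll\psi$ fails literally when $\bar u$ is comparable to $\psi$), and you handle it the way the paper implicitly does, by noting that boundedness of the subsolution together with $\psi\to\infty$ suffices to localize the first touching point.
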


\begin{proof}
   We separate the positive and negative parts of the initial data $(u_0)_+$ and $(u_0)_-$, and solve
   separately the two problems by linearity of equation~\eqref{eq:0}.
   Hence, let $u_n^+$ be the unique solution,
    see~\cite{ChasseigneChavesRossi07}, of the truncated problem
    \[
         \begin{array}{l@{\qquad}l}
          (u_n)_t=J*u_n-u_n, & (x,t)\in \mathbb{R}^n\times (0,\infty),\\[5pt]
          u_n(x,0)=(u_0)_+(x)\chi_n(x), & x\in\mathbb{R}^n.
        \end{array}
    \]
    Since, $u_n(x,0)\in\L^2(\R^N)$ the representation formula~\eqref{eq:u.chacharo} holds.
	Moreover, since $u_n(x,0)\in\L^\infty(\R^N)$, then
    \begin{equation}\label{e.aprox}
    u_n^+(x,t)=\e^{-t}u_n(x,0)+\big(\omega(t)\ast u_n(x,0)\big)(x)\le \|u_n(x,0)\|_\infty\,.
    \end{equation}
    So, we have obviously $u_n^+\ll\psi$ locally uniformly in $[0,\infty)$. Therefore, we can apply
    the comparison principle, Proposition~\ref{prop:comparison}, to get

    $(i)$ the sequence $\{u_n^+\}$ is monotone nondecreasing;

    $(ii)$ $u_n^+(x,t)\leq \bar c_0 \psi(x,t)$.

    Hence, there exists a limit
    $u^+$ defined in all $\mathbb{R}^N\times(0,\infty)$. We have to check that it is in fact a solution of~\eqref{eq:0}.
    Passage to the limit in the equation is done by using the dominated convergence for the convolution
    term: the limit function $u^+$ is in $\L^1_{\loc}(\mathbb{R}^N\times(0,\infty))$
    and $J\ast u_n^+$ converges to $J\ast u^+$ in $\L^1_\loc(\R^N)$, thus we recover a strong solution with initial data $(u_0)_+$\,.
	Moreover, passing to the limit in~\eqref{e.aprox} and in point $(ii)$ above we obtain
	$$
	u^+(x,t)=\e^{-t}(u_0)_+(x) + (\omega(t)\ast (u_0)_+)(x)\,,\qquad
	u^+(x,t)\leq \bar c_0\psi(x,t)\,.
	$$

    The same construction, using again $c_0\psi$ as a nonnegative supersolution gives a solution $u^-$
    with initial data $(u_0)_-$, the negative part of $u_0$.
    Thus, $u=u^+-u^-$ is a solution with initial data $u_0=(u_0)_+-(u_0)_-$.
    Comparison with $\psi$ is straightforward.
\end{proof}

\begin{remark}\rm Several remarks are to be made now:
\begin{enumerate}
    \item The solution $u^+$ constructed in this theorem can be used as a minimal solution in the class of nonnegative solutions.
    Indeed, assume that $v$ is another nonnegative solution with the same initial data $u_0\geq0$.
    Hence $v$ is also a supersolution and it can be used to play the role of $c_0\psi$.
    Thus, $u_n^+\leq v$ and passing to the limit we get $u\leq v$.
    \item There is a gap between the uniqueness and existence theorems: one requires that $u\ll\psi$ to get uniqueness
    while we are only able to construct a solution comparable to $\psi$. Thus, if one wants a global result of existence and uniqueness,
    one shall use two functions $\psi_1\ll\psi_2$ satisfying both \eqref{eq:def.psi}:
    there exists $u\leq \psi_1$ and it is unique, because $u\leq \psi_1\ll \psi_2$.
\end{enumerate}
\end{remark}

We end this section with a first estimate of the initial trace, using the same construction as for the existence theorem.
Notice that this estimate only concerns nonnegative (or nonpositive) initial data, but since
$|u_0|=u_0^+ - u_0^-$\,, then we shall be able to use the argument to obtain some precise estimates
also for changing sign solutions in the next section.

\begin{corollary}\label{cor:est.trace}
    Let $u$ be a nonnegative solution of \eqref{eq:0}--\,\eqref{eq:initial.data}.
    Then the initial trace $u_0=u(\cdot,0^+)\geq0$ of $u$
    satisfies the estimate:
    $$\text{For any }n\geq1,\ (J^{\ast n}\ast u_0)(x) <\infty\,.$$
\end{corollary}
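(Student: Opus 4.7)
The plan is to iterate the integral formulation of the equation satisfied by the auxiliary function $v(x,t):=\e^t u(x,t)$ already used in the proof of Proposition~\ref{prop:trace}, thereby producing a pointwise lower bound for $v$ in the form of a partial sum of the series suggested by Lemma~\ref{lem:omega}. From that proof we know that $v\in\L^1_\loc(\R^N\times\R_+)$ is nonnegative, admits the initial trace $u_0\geq0$, and satisfies the integrated equation
\[
v(x,t)=u_0(x)+\int_0^t (J\ast v)(x,s)\d s\qquad\text{for a.e. }x\in\R^N\text{ and every }t>0.
\]

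The core step is to show by induction on $n\geq0$ that, for a.e. $x\in\R^N$ and every $t>0$,
\[
v(x,t)\ \geq\ \sum_{k=0}^{n}\frac{t^k}{k!}\,(J^{\ast k}\ast u_0)(x).
\]
The base case $n=0$ is immediate from $v\geq u_0$. For the inductive step I would substitute the bound for $v(\cdot,s)$ into the integrated equation and then interchange the spatial convolution $J\ast$ with the time integral using Tonelli's theorem, which is legitimate here because every function in sight is nonnegative. The algebraic identity $J\ast J^{\ast k}=J^{\ast(k+1)}$ and the elementary integration $\int_0^t s^k\d s=t^{k+1}/(k+1)$ then deliver the $(n+1)$-th partial sum.

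Finally, since $v\in \L^1_\loc(\R^N\times\R_+)$, the value $v(x,t)$ is finite for a.e. $(x,t)$; fixing such a pair with $t>0$ and using the nonnegativity of every summand in the displayed bound forces each individual term $\frac{t^k}{k!}(J^{\ast k}\ast u_0)(x)$ to be finite, hence $(J^{\ast k}\ast u_0)(x)<\infty$ for a.e. $x\in\R^N$ and every $k\geq1$. I do not foresee a real obstacle here: the only subtle point is the Fubini--Tonelli interchange in the inductive step, and the nonnegativity of all quantities makes it automatic.
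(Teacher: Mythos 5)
Your argument is correct, but it takes a genuinely different route from the paper. The paper proves the corollary by re-running the existence construction of Theorem~\ref{thm:ex}: it truncates the data to $u_0\chi_n$, invokes the representation formula $u_n=\e^{-t}u_0\chi_n+\omega(t)\ast(u_0\chi_n)$ for these $\L^2$ data, uses comparison to see that the monotone limit (the minimal solution) is dominated by the given solution $u$ and hence finite, and concludes that $\omega(t)\ast u_0<\infty$, whence each term $J^{\ast n}\ast u_0$ of the series \eqref{eq:def.omega} is finite. Your proof bypasses all of that machinery: starting from the integrated identity \eqref{eq:integrated.v} for $v=\e^t u$ established in Proposition~\ref{prop:trace}, you iterate it to obtain the pointwise lower bound $v(x,t)\geq\sum_{k=0}^{n}\frac{t^k}{k!}(J^{\ast k}\ast u_0)(x)$, and finiteness of $v$ a.e.\ kills each nonnegative summand. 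The Tonelli interchange you flag is indeed harmless. Your route is more elementary and self-contained (no appeal to the representation formula of \cite{ChasseigneChavesRossi07}, to comparison, or to the minimal-solution construction), and as a bonus it yields, after letting $n\to\infty$ by monotone convergence, the quantitative bound $u(x,t)\geq\e^{-t}u_0(x)+(\omega(t)\ast u_0)(x)$ for any nonnegative solution, i.e.\ minimality of the convolution solution. The one discrepancy is that you conclude $(J^{\ast n}\ast u_0)(x)<\infty$ only for a.e.\ $x$, while the paper upgrades this to \emph{every} $x$ via the continuity of $\omega$ (equivalently of $J^{\ast n}$); if the everywhere statement is wanted, you would need to add that observation, since your induction only controls $v$ off a null set.
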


\begin{proof}
    We use the same construction as above, considering $u_0\chi_n$ as initial data. The sequence $u_n$ of solutions with initial data $u_0\chi_n$ is monotone
    nondecreasing (recall that here $u_0\geq0$). Moreover, since $u_0\chi_n\in\L^2(\R^N)$ the representation formula holds:
    $$u_n(x,t)=\e^{-t}u_0(x)\chi_n(x)+\big(\omega(t)\ast(u_0\chi_n)\big)(x)\,.$$
    Since there exists a solution $u$, then the minimal solution $\underline{u}$ constructed as in the previous proof, as the limit of $u_n$, has to be
    finite almost everywhere. This implies that
    $$\big(\omega(t)\ast u_0\big)(x)<\infty\quad a.e.\ \text{in } \R^N\,,$$
    and actually everywhere since $\omega$ is continuous.
    Using the explicit formula for $\omega$, see \eqref{eq:def.omega}, we deduce that for any $n\geq1$, we have necessarily
    $J^{\ast n}\ast u_0<\infty$ everywhere in $\R^N$.
\end{proof}

\section{Optimal behaviour of the initial data for slow decaying kernels}
\setcounter{equation}{0}
\label{sect:optimal}

Using the results of the previous section we are able to
find the  optimal class of initial data such that there exists a
unique solution to~\eqref{eq:0}--\,\eqref{eq:initial.data}.
As we pointed out in the Introduction, the main point here is that $J$ decays slow at infinity. In fact, we will consider three different $J$'s, all of them decaying at infinity slower than $\exp(- \alpha|x|)$, $\alpha>0$. All three cases are treated in the same way: using elliptic barriers. That is, the
main point consists in finding the biggest function $f$ that verifies hypothesis~\eqref{eq:J.f}. The existence
of such a function implies the existence of the supersolution $\psi$.

\subsection{Power-type kernels}
\label{subsect:power}
As mentioned in the Introduction, we recover in this subsection the existence and uniqueness results in~\cite{AI} for power-type kernels. We also give complementary results concerning nonexistence of solutions and optimality of the barriers.

Let us suppose that
\begin{equation}\label{eq:j:momento}
\gamma_0:=\sup\Big\{\gamma>0: \int J(y)|y|^\gamma\d y< \infty\Big\}\in
(0,\infty)\,.
\end{equation}
That is, let us assume that $J$ has momentum of order $\gamma$ for
all $\gamma< \gamma_0$. Observe that, by continuity of the
function $\gamma \mapsto \int J(y)|y|^{\gamma}\d y$, we get that the
momentum of order $\gamma_0$ is infinite.
Kernels satisfying condition~\eqref{eq:j:momento} are for example those with potential decay $J(x)\sim|x|^{-(\gamma_0+N)}$
for $|x|$ large.

\begin{lemma}  \label{lemma.super.power}
  Let $J$ be a kernel satisfying~\eqref{eq:j:momento}. For any $\gamma<\gamma_0$ there exists $\lambda=\lambda(J,\gamma)$ such that $J$ verifies~\eqref{eq:J.f} with $f(x)=1+|x|^\gamma$.
\end{lemma}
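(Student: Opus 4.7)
The plan is to reduce the inequality \eqref{eq:J.f} to a pointwise elementary bound on $|x-y|^\gamma$, which will then be integrated against $J$.

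First I would rewrite the left-hand side of \eqref{eq:J.f}. Since $\int J=1$ and $f(x)=1+|x|^\gamma$, one has
\[
J\ast f(x) - f(x) \;=\; \int_{\R^N} J(y)\,|x-y|^\gamma\,\d y \;-\; |x|^\gamma,
\]
so the problem reduces to controlling $\int J(y)|x-y|^\gamma\d y$ in terms of $1+|x|^\gamma$. The natural moment appearing on the right-hand side is $m_\gamma:=\int J(y)|y|^\gamma\d y$, which is finite precisely because $\gamma<\gamma_0$ (this is the only place the hypothesis \eqref{eq:j:momento} is used).

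The key step is the elementary inequality: for any $\gamma>0$ and any $\varepsilon>0$ there exists $C_\varepsilon=C_\varepsilon(\gamma)>0$ such that
\[
(a+b)^\gamma \;\leq\; (1+\varepsilon)\,a^\gamma + C_\varepsilon\, b^\gamma, \qquad \forall\, a,b\geq 0.
\]
This follows by dichotomy on the ratio $b/a$: if $b/a$ is small, expand $(1+b/a)^\gamma$; if $b/a$ is large, bound $a+b$ by a multiple of $b$. Applying it with $a=|x|$, $b=|y|$ and using $|x-y|\leq |x|+|y|$, I get
\[
|x-y|^\gamma \;\leq\; (1+\varepsilon)\,|x|^\gamma + C_\varepsilon\,|y|^\gamma.
\]

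Integrating against $J(y)\,\d y$ and subtracting $|x|^\gamma$ then yields
\[
J\ast f(x) - f(x) \;\leq\; \varepsilon\,|x|^\gamma + C_\varepsilon\, m_\gamma \;\leq\; \bigl(\varepsilon + C_\varepsilon m_\gamma\bigr)\bigl(1+|x|^\gamma\bigr),
\]
where I bounded $|x|^\gamma\leq f(x)$ and $1\leq f(x)$ in the last step. Fixing, say, $\varepsilon=1$ and setting $\lambda:=1+C_1\,m_\gamma$ gives \eqref{eq:J.f} with this explicit $\lambda=\lambda(J,\gamma)$.

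There is no real obstacle here: the argument is essentially a separation-of-variables inequality combined with the moment assumption. The only subtlety worth noting is that one might be tempted to use the symmetry of $J$ to kill the first-order term (writing $\tfrac{1}{2}\int J(y)[|x-y|^\gamma+|x+y|^\gamma-2|x|^\gamma]\d y$ and invoking concavity of $s\mapsto s^{\gamma/2}$ when $\gamma\leq 2$), which would yield the sharper bound $J\ast f-f\leq m_\gamma$; however, this refinement is not needed for the qualitative statement of the lemma, and the elementary inequality above has the advantage of covering all $\gamma<\gamma_0$ uniformly, including the range $\gamma>2$.
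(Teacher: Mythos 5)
Your proof is correct and follows essentially the same route as the paper's: both reduce \eqref{eq:J.f} to a pointwise separation bound of the form $|x-y|^\gamma \lesssim |x|^\gamma + |y|^\gamma$ and then integrate against $J$, using only the finiteness of $m_\gamma$ for $\gamma<\gamma_0$. The paper implements the separation via $|x-y|\le 2\max\{|x|,|y|\}$ and a split of the integral over $B(0,|x|)$ and its complement (giving $\lambda=2^\gamma\max\{1,m_\gamma\}$), while you use the $\varepsilon$-weighted inequality $(a+b)^\gamma\le(1+\varepsilon)a^\gamma+C_\varepsilon b^\gamma$; the difference is purely cosmetic.
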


\begin{proof}
Since $|x-y|\le 2 \max \{ |x|,\, |y|\},$ we have
$$
    \begin{aligned}
       J\ast (1+|x|^\gamma)-(1+|x|^\gamma)& = J\ast |x|^\gamma-|x|^\gamma\\& \le J\ast |x|^\gamma= \int_{\mathbb{R}^N} J(y)\,|x-y|^\gamma\d y\\
       &=\int_{B(0,|x|)}J(y) |x-y|^\gamma\d y+ \int_{\mathbb R^N\setminus B(0,|x|)}J(y) |x-y|^\gamma\d y\\
        &\le \displaystyle  |x|^\gamma 2^\gamma\int_{B(0,|x|)}J(y) \d y + 2^\gamma\int_{\mathbb R^N\setminus B(0,|x|)} J(y)|y|^\gamma\d y\\
        &\le \displaystyle \lambda(|x|^\gamma + 1),
\end{aligned}
$$
which holds if $\lambda\geq2^\gamma \max\Big\{1, \int_{\mathbb R^N} J(y)|y|^\gamma\d y\Big\}$.
\end{proof}

\begin{theorem}
    \label{thm:powertype}
    Let $J$ be a kernel satisfying~\eqref{eq:j:momento}. Let $u_0$ be a locally bounded initial data
    such that $|u_0(x)|\le c_0(1+|x|^\gamma)$ for
     $c_0>0$ and $0<\gamma<\gamma_0$. Then,    there exists a  global solution of \eqref{eq:0}--\,\eqref{eq:initial.data}, which satisfies
    $$
    |u(x,t)|\ll c_0 (1+|x|^{\gamma}) \mbox{ locally uniformly in }\ [0,\infty).
    $$
    Moreover, $u$ is the unique solution in this class.
\end{theorem}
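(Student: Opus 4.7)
The plan is to combine the elliptic barrier from Lemma~\ref{lemma.super.power} with the general existence result (Theorem~\ref{thm:ex}) and uniqueness result (Theorem~\ref{thm:uniqueness}). The only subtlety is the gap between these two noted in the Remark after Theorem~\ref{thm:ex}: existence yields a bound $|u|\le c_0\psi$ while uniqueness requires $|u|\ll\psi$. This will be bridged by introducing two barriers $\psi_1\ll\psi_2$ built from two exponents $\gamma<\gamma'<\gamma_0$.

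First, since $\gamma<\gamma_0$, I would fix an auxiliary exponent $\gamma'\in(\gamma,\gamma_0)$. Applying Lemma~\ref{lemma.super.power} at $\gamma$ and at $\gamma'$ gives constants $\lambda,\lambda'>0$ such that $f_\gamma(x):=1+|x|^\gamma$ and $f_{\gamma'}(x):=1+|x|^{\gamma'}$ both verify the barrier inequality~\eqref{eq:J.f}. Setting
\[
\psi_1(x,t):= e^{\lambda t}(1+|x|^\gamma),\qquad \psi_2(x,t):= e^{\lambda' t}(1+|x|^{\gamma'}),
\]
both functions are continuous, nonnegative, tend to $+\infty$ as $|x|\to\infty$ uniformly for $t$ in compact intervals, and are supersolutions, so they belong to the class \eqref{eq:def.psi}.

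Next, for existence, the hypothesis $|u_0(x)|\le c_0(1+|x|^\gamma)=c_0\psi_1(x,0)$ allows me to invoke Theorem~\ref{thm:ex} with the barrier $\psi_1$: it produces a strong solution of~\eqref{eq:0}--\eqref{eq:initial.data} satisfying
\[
|u(x,t)|\le c_0\psi_1(x,t)=c_0 e^{\lambda t}(1+|x|^\gamma).
\]

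Finally, for uniqueness the key observation is that $\psi_1\ll\psi_2$ locally uniformly on $[0,\infty)$: the ratio $\psi_1/\psi_2$ equals $e^{(\lambda-\lambda')t}\,(1+|x|^\gamma)/(1+|x|^{\gamma'})$, whose time factor is locally bounded in $t$ and whose spatial factor tends to $0$ as $|x|\to\infty$ because $\gamma<\gamma'$. Combining this with the existence bound gives $|u|\le c_0\psi_1\ll \psi_2$ locally uniformly, so $u$ lies in the uniqueness class of Theorem~\ref{thm:uniqueness} associated with $\psi_2$, and the solution is unique there. The only real obstacle is the existence/uniqueness gap; once the two-exponent trick is in place, each step is a direct citation of the abstract theorems of Section~\ref{sect:ex.un}.
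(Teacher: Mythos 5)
Your proposal is correct and takes essentially the same route as the paper, whose proof is the one-liner ``apply Theorems~\ref{thm:uniqueness} and~\ref{thm:ex} together with Lemma~\ref{lemma.super.power}, choosing $\psi=c_0\e^{\lambda t}(1+|x|^\gamma)$.'' Your two-barrier device $\psi_1\ll\psi_2$ with $\gamma<\gamma'<\gamma_0$ is precisely the fix the authors themselves describe in the remark following Theorem~\ref{thm:ex} to bridge the existence bound $|u|\le c_0\psi$ and the uniqueness hypothesis $|u|\ll\psi$, so you have simply made explicit a detail the paper leaves to that remark.
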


\begin{proof}
It follows directly from theorems~\ref{thm:uniqueness},~\ref{thm:ex} and Lemma~\ref{lemma.super.power} by choosing
$\psi=c_0 \e^{\lambda t}(1+|x|^\gamma)$.
\end{proof}

\begin{theorem}
Let $J$ be a kernel satisfying~\eqref{eq:j:momento}. Assume that $u_0(x)\ge c_0(1+|x|^\gamma)$ for $c_0>0$ and $\gamma\ge \gamma_0$. Then there exists no nonnegative solution of   \eqref{eq:0}--\,\eqref{eq:initial.data}.
\end{theorem}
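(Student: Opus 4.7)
The plan is to derive a contradiction from the trace estimate in Corollary~\ref{cor:est.trace}. If a nonnegative solution existed with initial data $u_0 \geq c_0(1+|x|^\gamma)$, then in particular the initial trace would have to satisfy $(J \ast u_0)(x) < \infty$ for every $x \in \R^N$. I would evaluate this at $x=0$, which by symmetry of $J$ gives
$$
(J \ast u_0)(0) \;=\; \int_{\R^N} J(y)\, u_0(y)\,\d y \;\geq\; c_0 \int_{\R^N} J(y)\bigl(1+|y|^\gamma\bigr)\,\d y\,.
$$

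The next step is to argue that the right-hand side is infinite whenever $\gamma \geq \gamma_0$. When $\gamma > \gamma_0$, this is immediate from the definition of $\gamma_0$ as a supremum: any strictly larger exponent yields a divergent momentum. The borderline case $\gamma = \gamma_0$ is precisely the content of the remark just after \eqref{eq:j:momento}, which asserts by continuity of $\gamma \mapsto \int J(y)|y|^\gamma\,\d y$ on the set where it is finite that the $\gamma_0$-th momentum is itself infinite (otherwise one could extend slightly beyond $\gamma_0$, contradicting the supremum). In either case we conclude $(J \ast u_0)(0) = +\infty$, contradicting the trace estimate, and therefore no nonnegative solution can exist.

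The argument is very short because all the real work has already been done: the hard step was Corollary~\ref{cor:est.trace}, which upgrades the mere existence of a nonnegative weak solution into a pointwise integrability estimate on $u_0$ against $J$. The only mild subtlety to double-check is the endpoint $\gamma = \gamma_0$, where one must rely on the continuity-of-momentum remark rather than on the definition of $\gamma_0$ directly; everything else reduces to the trivial bound $u_0 \geq c_0(1+|x|^\gamma)$ composed with a single convolution.
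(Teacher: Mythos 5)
Your proof is essentially the paper's proof: both argue that Corollary~\ref{cor:est.trace} forces $J\ast u_0<\infty$ pointwise, and that this is incompatible with the growth $u_0\ge c_0(1+|x|^\gamma)$ once $\gamma\ge\gamma_0$, using the remark after \eqref{eq:j:momento} that the $\gamma_0$-momentum of $J$ diverges. The only thing worth flagging is your parenthetical justification of the endpoint: finiteness of $\int J(y)|y|^{\gamma_0}\d y$ would \emph{not} by itself let you push past $\gamma_0$ (take $J(y)\sim|y|^{-N-\gamma_0}(\ln|y|)^{-2}$ at infinity, for which $\gamma_0$ is attained with a finite momentum), so ``contradicting the supremum'' is not the right reason; you should simply invoke the paper's stated remark, which is really a hypothesis on the tail of $J$ (true e.g.\ for the pure power-law kernels the paper has in mind) rather than a consequence of $\gamma_0$ being a supremum.
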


\begin{proof}
By Corollary \ref{cor:est.trace}, if there exists a nonnegative solution, then $J\ast u_0<\infty$, which is a contradiction with the growth of $u_0$.
\end{proof}

\begin{remark}\label{rem.nonex}
  {\rm If $u_0$ is a changing sign initial data, the non-existence theorem holds in the class of solutions, whose negative (or positive) part  is somehow controlled. More precisely, if we consider initial data $u_0$ such that
  $J*(u_0)_-< \infty$ and  $J*(u_0)_+= \infty$, then there are no solutions in the class $\{u_-\leq \psi\}$. Indeed, assuming there exists a solution $u$ in this class, then we can consider the nonnegative supersolution $v=u+\psi$, which allows us to construct the minimal solution $\underline u$ with initial data $v_0=u_0+\psi(0)$ as in Theorem~\ref{thm:ex}. But then, $J\ast v_0$ has to be finite, which contradicts the hypothesis on the initial data since of course $J\ast\psi(0)<\infty$.}
\end{remark}

\subsection{Exponential-type kernels}

If the initial data is an exponential type one, the analogous of
having momentum of order $\gamma$, see~\eqref{eq:j:momento}, is
\begin{equation}
  \label{eq:momento.exponencial}
  \gamma_0=\sup\Big\{\gamma>0: \int J(y)\e^{\gamma|y|}\d y< \infty\Big\}\in (0,\infty).
\end{equation}
For instance, this is the case for $J(x)\sim c_0 \e^{-\gamma_0 |x|}$.

\begin{lemma} \label{lemma.super.exponential}
Let $J$ be a kernel satisfying~\eqref{eq:momento.exponencial}. For any $\gamma<\gamma_0$ there exists $\lambda=\lambda(J,\gamma)$ such that $J$ verifies~\eqref{eq:J.f} with $f(x)= \e^{\gamma|x|}$.
\end{lemma}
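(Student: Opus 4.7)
The plan is to use the triangle inequality $|x-y|\le|x|+|y|$, which turns the exponential barrier $f(x)=\e^{\gamma|x|}$ into a multiplicative one: $f(x-y)\le f(x)\,\e^{\gamma|y|}$. This is much cleaner than in the polynomial case treated in Lemma~\ref{lemma.super.power}, because exponentials factor additively while powers do not, so no splitting of the integration domain into $B(0,|x|)$ and its complement is required.

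Inserting this factorization into the convolution yields the pointwise bound
\[
(J\ast f)(x)=\int_{\R^N} J(y)\e^{\gamma|x-y|}\d y \;\le\; f(x)\int_{\R^N} J(y)\e^{\gamma|y|}\d y.
\]
Setting $C(\gamma):=\int_{\R^N} J(y)\e^{\gamma|y|}\d y$, which is finite because $\gamma<\gamma_0$ and by hypothesis~\eqref{eq:momento.exponencial}, one obtains $(J\ast f-f)(x)\le (C(\gamma)-1)f(x)$, so the choice $\lambda:=C(\gamma)-1$ does the job.

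It remains only to argue that $\lambda$ is strictly positive, as required by~\eqref{eq:J.f}. Since $J$ is a continuous probability density, it cannot be concentrated at the origin, so $\e^{\gamma|y|}>1$ on a set of positive $J$-measure, giving $C(\gamma)>1$. There is no genuine obstacle here; the only thing worth emphasizing is that the construction is tight in the sense that $C(\gamma)\uparrow\infty$ as $\gamma\uparrow\gamma_0$, which is consistent with the expected optimality of the critical exponent $\gamma_0$ and will be used in the companion existence/uniqueness theorem paralleling Theorem~\ref{thm:powertype}.
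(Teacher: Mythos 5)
Your proof is correct and follows essentially the same route as the paper: both rest on the triangle inequality $|x-y|-|x|\le|y|$ to factor the exponential out of the convolution, yielding $J\ast f\le C(\gamma)f$ with $C(\gamma)=\int J(y)\e^{\gamma|y|}\d y<\infty$ for $\gamma<\gamma_0$. The only (harmless) difference is that you keep the $-f$ term to get the slightly sharper constant $\lambda=C(\gamma)-1$, whose positivity you correctly justify, whereas the paper simply drops $-f$ and takes $\lambda=C(\gamma)$.
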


\begin{proof}
   The proof follows as before. We only note that $|x-y|-|x|\le |y|$, then
    \[
        \begin{aligned}
        J\ast \e^{\gamma|x|}-\e^{\gamma|x|}&\le J\ast \e^{\gamma |x|}=\e^{\gamma|x|}\int_{\mathbb{R}^N} J(y) \e^{\gamma|x-y|-\gamma|x|} \,dy\\
        &\le \e^{\gamma|x|} \int_{\mathbb{R}^N} J(y) \e^{\gamma|y|} \,dy\\
        &\le \lambda\e^{\gamma|x|} ,
    \end{aligned}
    \]
    which is true, if   $\lambda\geq\int_{\mathbb{R}^N} J(y)\e^{\gamma|y|}\d y$.
\end{proof}

Hence, following the same arguments given in the previous subsection, with  $\psi(x,t)=\e^{\lambda t}\e^{\gamma|x|}$, we obtain the following results:

\begin{theorem}
        \label{thm:exponentialtype}
    Let $J$ be a kernel satisfying~\eqref{eq:momento.exponencial}. Let $u_0$ be a locally bounded initial data
    such that $|u_0(x)|\le c_0\e^{\gamma|x|}$ for
     $c_0>0$ and $0<\gamma<\gamma_0$. Then,    there exists a  global solution of \eqref{eq:0}--\,\eqref{eq:initial.data}, which satisfies
    $$
    |u(x,t)|\ll c_0\e^{\gamma|x|} \mbox{ locally uniformly in }\ [0,\infty).
    $$
    Moreover, $u$ is the unique solution in this class.
\end{theorem}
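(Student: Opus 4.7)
The plan is to follow the same short path as in Theorem~\ref{thm:powertype}, using Lemma~\ref{lemma.super.exponential} in place of Lemma~\ref{lemma.super.power} to supply the exponential barrier. Fix $\gamma\in(0,\gamma_0)$; by Lemma~\ref{lemma.super.exponential} there is $\lambda=\lambda(J,\gamma)$ such that $f(x):=\e^{\gamma|x|}$ satisfies $J\ast f-f\leq\lambda f$. I would then set
$$
\psi(x,t)\,:=\,\e^{\lambda t}\e^{\gamma|x|}
$$
and verify the three requirements of \eqref{eq:def.psi}: continuity and nonnegativity are obvious; $\psi(x,t)\to+\infty$ as $|x|\to\infty$ uniformly on any compact subset of $[0,\infty)$ because $\e^{\lambda t}\geq1$ there; and $\psi_t=\lambda\psi\geq J\ast\psi-\psi$ reduces, after factoring the time-dependent exponential, to the pointwise barrier inequality above.

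Since the hypothesis reads $|u_0(x)|\leq c_0\e^{\gamma|x|}=c_0\,\psi(x,0)$, Theorem~\ref{thm:ex} applies directly and produces a strong global solution $u$ of~\eqref{eq:0}--\eqref{eq:initial.data} with the pointwise bound
$$
|u(x,t)|\leq c_0\,\psi(x,t)=c_0\e^{\lambda t}\e^{\gamma|x|}\quad\text{in }\R^N\times[0,\infty).
$$

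To upgrade this to the strict ``$\ll$'' control and to obtain uniqueness, I would use the two-supersolutions trick spelled out in the remark after Theorem~\ref{thm:ex}. Because $\gamma<\gamma_0$ strictly, one can pick an auxiliary exponent $\gamma'\in(\gamma,\gamma_0)$ and apply Lemma~\ref{lemma.super.exponential} again to obtain $\lambda'=\lambda(J,\gamma')$ and a second supersolution $\psi'(x,t):=\e^{\lambda' t}\e^{\gamma'|x|}$. The ratio $\psi/\psi'$ equals $\e^{(\lambda-\lambda')t}\e^{(\gamma-\gamma')|x|}$; with $C(t):=c_0\e^{(\lambda-\lambda')t}\in\L^\infty_{\rm loc}([0,\infty))$ and $\epsilon(x):=\e^{(\gamma-\gamma')|x|}\to0$ as $|x|\to\infty$, this exactly fits Definition~\ref{def:neg.loc.uniform}. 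Hence the solution constructed above satisfies $|u|\ll\psi'$ locally uniformly, and any other solution in the class of the theorem satisfies the same. Theorem~\ref{thm:uniqueness} applied with $\psi'$ then identifies $u$ as the unique such solution.

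The only genuine obstacle is this mismatch between the non-strict bound supplied by Theorem~\ref{thm:ex} and the strict ``$\ll$'' control required by Theorem~\ref{thm:uniqueness}, and it is resolved precisely by the room left between $\gamma$ and $\gamma_0$: the strict inequality $\gamma<\gamma_0$ is what makes the intermediate exponent $\gamma'$ available, and once it is chosen the rest of the argument is a purely formal concatenation of the barrier lemma with the abstract existence and uniqueness results of Section~\ref{sect:ex.un}.
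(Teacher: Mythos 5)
Your proposal is correct and follows essentially the same route as the paper, which simply invokes Lemma~\ref{lemma.super.exponential} and repeats the power-type argument with $\psi(x,t)=\e^{\lambda t}\e^{\gamma|x|}$ via Theorems~\ref{thm:ex} and~\ref{thm:uniqueness}. Your explicit use of the intermediate exponent $\gamma'\in(\gamma,\gamma_0)$ to reconcile the non-strict bound from Theorem~\ref{thm:ex} with the strict ``$\ll$'' hypothesis of Theorem~\ref{thm:uniqueness} is exactly the two-supersolutions device the paper mentions in the remark after Theorem~\ref{thm:ex}, spelled out more carefully than in the paper itself.
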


\begin{theorem}
Let $J$ be a kernel satisfying~\eqref{eq:momento.exponencial}. Assume that $u_0(x)\ge c_0\e^{\gamma|x|}$ for $c_0>0$ and $\gamma\ge \gamma_0$. Then there exists no nonnegative solution of   \eqref{eq:0}--\,\eqref{eq:initial.data}.
\end{theorem}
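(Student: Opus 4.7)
The plan is to argue by contradiction and to reduce the statement immediately to Corollary~\ref{cor:est.trace}, mirroring the proof of the nonexistence theorem for power-type kernels given just above. Suppose that a nonnegative solution $u$ of \eqref{eq:0}--\eqref{eq:initial.data} exists. By Proposition~\ref{prop:trace} its initial trace is a nonnegative $\L^1_\loc(\R^N)$ function, which by the initial condition must coincide with the prescribed datum $u_0$. Corollary~\ref{cor:est.trace} then forces
\[
(J*u_0)(x)<\infty\qquad\text{for every }x\in\R^N.
\]

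I would specialise this finiteness statement at $x=0$. Using the symmetry of $J$ together with the pointwise lower bound $u_0(y)\geq c_0\e^{\gamma|y|}$,
\[
(J*u_0)(0)=\int_{\R^N}J(y)\,u_0(y)\,\d y \;\geq\; c_0\int_{\R^N}J(y)\,\e^{\gamma|y|}\,\d y.
\]
For $\gamma>\gamma_0$, the integral on the right-hand side equals $+\infty$ straight from the definition~\eqref{eq:momento.exponencial} of $\gamma_0$, which contradicts the finiteness of $(J*u_0)(0)$ obtained from Corollary~\ref{cor:est.trace}.

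The borderline value $\gamma=\gamma_0$ is the only subtle point, and I would handle it exactly as in Section~\ref{subsect:power}. The map $\gamma\mapsto\int_{\R^N}J(y)\e^{\gamma|y|}\d y$ is nondecreasing on $(0,\infty)$, takes the value $+\infty$ for every $\gamma>\gamma_0$, and by monotone convergence satisfies $\int J(y)\e^{\gamma|y|}\d y \nearrow \int J(y)\e^{\gamma_0|y|}\d y$ as $\gamma\uparrow\gamma_0$; combined with the supremum characterisation of $\gamma_0$, this forces the critical exponential moment itself to be infinite, and the same contradiction applies.

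Thus the main, and really only, obstacle is this borderline step, which is the exact analogue of the continuity argument already used for power-type kernels; once it is granted, the proof reduces to a one-line application of Corollary~\ref{cor:est.trace} combined with the symmetry of $J$.
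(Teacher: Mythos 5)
Your argument is exactly the paper's: the theorem is proved there only by reference to the power-type case, whose proof is the same one-line application of Corollary~\ref{cor:est.trace} — a nonnegative solution forces $(J\ast u_0)(x)<\infty$, while the lower bound on $u_0$ gives $(J\ast u_0)(0)\geq c_0\int J(y)\e^{\gamma|y|}\d y=\infty$. The one caveat is that your monotone-convergence step for the borderline case $\gamma=\gamma_0$ does not actually force $\int J(y)\e^{\gamma_0|y|}\d y=\infty$ (the increasing limit of finite integrals can be finite: for $J(y)\sim\e^{-\gamma_0|y|}(1+|y|)^{-N-2}$ the critical exponential moment converges even though every supercritical one diverges), but this is precisely the same implicit assumption the paper makes with its ``by continuity'' remark in Section~\ref{subsect:power}, so it is an inherited rather than a new gap.
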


Moreover, from Remark \ref{rem.nonex} we also have nonexistence for solutions with changing sign if we control the negative part of the initial data.

\subsection{Tempered $\alpha$-stable type kernels}
\label{subsect:alpha.stable}
Assume now that  the kernel $J$  verifies
\begin{eqnarray}
  \label{eq:momento.power-exp}
      \gamma_0&=&\sup\Big\{\gamma>0: \int J(y)\e^{\gamma|y|}\d y< \infty\Big\}\in (0,\infty),\\
      \alpha_0&=&\sup\Big\{\alpha>0: \int J(y)\e^{\gamma_0|y|}(1+|y|)^{N+\alpha}\d y< \infty\Big\}\in (0,\infty).
   \label{eq:momento.power-expII}
  \end{eqnarray}
This is the case for instance if we consider $J(x)\sim\dfrac{\e^{-\gamma_0|x|}}{(1+|x|)^{N+\alpha_0}}$ as $|x|\to\infty$.

It is easy to check that $f(x)=\e^{\gamma|x|}$, with $\gamma<\gamma_0$ is a barrier, but it is not optimal as the following lemma shows:

\begin{lemma}  \label{lemma.super.power-exp}
Let $J$ be a kernel satisfying~\eqref{eq:momento.power-exp}. For any $\alpha<\alpha_0$ there exists $\lambda=\lambda(J,\alpha)$ such that $J$ verifies~\eqref{eq:J.f} with $f(x)=\e^{\gamma_0|x|}(1+|x|)^{N+\alpha}$.
\end{lemma}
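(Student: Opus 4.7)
Following the pattern of Lemmas \ref{lemma.super.power} and \ref{lemma.super.exponential}, the plan is to estimate $J\ast f$ directly by bounding $f(x-y)$ pointwise in terms of $f(x)$ times a factor depending only on $y$ that is integrable against $J$. I would avoid the split into $B(0,|x|)$ and its complement that was used in the previous lemmas, because a simple sub-multiplicativity identity absorbs both the exponential and the polynomial correction simultaneously.

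The key observation is that for any $x,y\in\R^N$, the triangle inequality yields
$$e^{\gamma_0|x-y|} \leq e^{\gamma_0|x|}\,e^{\gamma_0|y|},$$
and, since $(1+|x|)(1+|y|) = 1+|x|+|y|+|x||y| \geq 1+|x|+|y| \geq 1+|x-y|$, raising to the positive power $N+\alpha$ gives
$$(1+|x-y|)^{N+\alpha} \leq (1+|x|)^{N+\alpha}(1+|y|)^{N+\alpha}.$$
Multiplying these two estimates produces $f(x-y) \leq f(x)\cdot e^{\gamma_0|y|}(1+|y|)^{N+\alpha}$.

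Plugging this into $J\ast f(x)=\int J(y)f(x-y)\,\d y$ and factoring out $f(x)$ yields
$$J\ast f(x) \leq f(x) \int_{\R^N} J(y)\,e^{\gamma_0|y|}(1+|y|)^{N+\alpha}\,\d y =: C_\alpha\, f(x).$$
By hypothesis \eqref{eq:momento.power-expII} and the assumption $\alpha<\alpha_0$, the constant $C_\alpha$ is finite. Setting $\lambda := \max\{C_\alpha-1,\varepsilon\}$ for some arbitrary $\varepsilon>0$ immediately gives $J\ast f - f \leq \lambda f$, which is \eqref{eq:J.f}.

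There is essentially no technical obstacle here: the only subtle point is verifying that the submultiplicative bound $(1+|x-y|)\leq(1+|x|)(1+|y|)$ is compatible with the exponent $N+\alpha>0$, which is automatic. What deserves to be emphasized rather than proved is that this argument is \emph{sharp in $\alpha$}: the constant $C_\alpha$ diverges as $\alpha\uparrow\alpha_0$, so the barrier $f$ saturates precisely at the critical exponent defined by \eqref{eq:momento.power-expII}, which is exactly why Theorem~\ref{thm:3} can later claim optimality of this class.
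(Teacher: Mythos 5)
Your proof is correct and takes a genuinely cleaner route than the paper's. The paper mimics Lemma~\ref{lemma.super.power}: after peeling off the exponential via $|x-y|-|x|\leq|y|$, it splits the remaining integral over $B(0,|x|)$ and its complement and controls the polynomial factor by $1\leq(1+|x-y|)\leq 2(1+\max\{|x|,|y|\})$, ending with $\lambda\geq 2^{N+\alpha}\int J(y)\e^{\gamma_0|y|}(1+|y|)^{N+\alpha}\,\d y$. You instead observe that the barrier $f(x)=\e^{\gamma_0|x|}(1+|x|)^{N+\alpha}$ is itself submultiplicative, because $1+|x-y|\leq 1+|x|+|y|\leq(1+|x|)(1+|y|)$ and $\e^{\gamma_0|x-y|}\leq\e^{\gamma_0|x|}\e^{\gamma_0|y|}$, so $f(x-y)\leq f(x)\,\e^{\gamma_0|y|}(1+|y|)^{N+\alpha}$ pointwise and $J\ast f\leq C_\alpha f$ in one step, with $C_\alpha=\int J(y)\e^{\gamma_0|y|}(1+|y|)^{N+\alpha}\,\d y$. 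This avoids the near/far decomposition entirely and gives a sharper constant (no factor $2^{N+\alpha}$). The reason the paper's authors could not use this trick in Lemma~\ref{lemma.super.power} is that their power-type barrier is $1+|x|^\gamma$ rather than $(1+|x|)^\gamma$, which is not submultiplicative; here the barrier has precisely the multiplicative form that makes your argument go through. One tiny cosmetic remark: since $J$ is a probability density, $C_\alpha\geq 1$ automatically, and in fact $C_\alpha>1$ because $J$ is not a point mass, so the $\max\{C_\alpha-1,\varepsilon\}$ safeguard is unnecessary, though harmless.
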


The proof follows the same lines as in Subsection~\ref{subsect:power}. We prefer to write here the power as $N+\alpha$ instead of $\alpha$, in order to be more consistent with the usual notation for tempered $\alpha$-stable laws.

\begin{proof}
Let us check that $f(x)=\e^{\gamma_0|x|}(1+|x|)^{N+\alpha}$ is in fact a barrier:
$$
    J*\e^{\gamma_0|x|}(1+|x|)^{N+\alpha}-\e^{\gamma_0|x|}(1+|x|)^{N+\alpha}\leq \lambda \e^{\gamma_0|x|}(1+|x|)^{N+\alpha},
$$
for some $\lambda$ to be chosen later.
Indeed, Since $f$ is a positive function and  $|x-y|-|x|\le |y|$  we have that
$$
    \begin{aligned}
J\ast f-f\le J\ast f &= \int_{\mathbb{R}^N}J(y) \e^{\gamma_0|x-y|}(1+|x-y|)^{N+\alpha} \d y\\
            &= \e^{\gamma_0|x|} \int_{\mathbb{R}^N}J(y) \e^{\gamma_0|x-y|-\gamma_0 |x|} (1+|x-y|)^{N+\alpha}\d y\\
            &\leq \e^{\gamma_0|x|}\int_{\mathbb{R}^N}J(y) \e^{\gamma_0|y|} (1+|x-y|)^{N+\alpha}\d y =\e^{\gamma_0|x|} I.
    \end{aligned}
 $$
 In order to treat the power term, we note that $1\le (1+|x-y|)\le 2 (1+\max \{ |x|,\, |y|\})$, then
$$
    \begin{aligned}
 I&= \int_{B(0,|x|)}J(y) \e^{\gamma_0|y|} (1+|x-y|)^{N+\alpha} \d y +\int_{\mathbb{R}^N\setminus B(0,|x|)}J(y) \e^{\gamma_0|y|} (1+|x-y|)^{N+\alpha}\d y \\
&\le 2^{N+\alpha}(1+|x|)^{N+\alpha} \int_{B(0,|x|)}J(y) \e^{\gamma_0|y|} \d y+ 2^{N+\alpha}\int_{\mathbb{R}^N\setminus B(0,|x|)}J(y) \e^{\gamma_0|y|}(1+|y|)^{N+\alpha}\d y\\
&\le 2^{N+\alpha}(1+|x|)^{N+\alpha}\Big( \int_{B(0,|x|)}J(y) \e^{\gamma_0|y|} \d y+\int_{\mathbb{R}^N\setminus B(0,|x|)}J(y) \e^{\gamma_0|y|}(1+|y|)^{N+\alpha}\d y\Big).
    \end{aligned}
 $$
  Hence, using~\eqref{eq:momento.power-expII} it is enough to take $\lambda\geq 2^{N+\alpha}\int_{\mathbb{R}^N}J(y)\e^{\gamma_0 |y|}(1+|y|)^{N+\alpha}\d y$ to get the result.
\end{proof}

The results of existence, uniqueness and non-uniqueness are straightforward.
\begin{theorem}
        \label{thm:exponential-power-type}
    Let $J$ be a kernel satisfying~\eqref{eq:momento.power-exp}. Let $u_0$ be a locally bounded initial data
    such that $|u_0(x)|\le c_0\e^{\gamma|x|}(1+|x|)^{N+\alpha}$ for
     $c_0>0$, $0<\gamma<\gamma_0$ and $0<\alpha<\alpha_0$ . Then,    there exists a  global solution of \eqref{eq:0}--\,\eqref{eq:initial.data}, which satisfies
    $$
    |u(x,t)|\ll c_0\e^{\gamma|x|}(1+|x|)^{N+\alpha} \mbox{ locally uniformly in }\ [0,\infty).
    $$
    Moreover, $u$ is the unique solution in this class.
\end{theorem}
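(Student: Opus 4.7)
The plan is to follow exactly the template set by the two previous subsections: build a supersolution of the form $\psi(x,t)=\e^{\lambda t}f(x)$ from the barrier supplied by Lemma~\ref{lemma.super.power-exp}, then quote Theorems~\ref{thm:ex} and~\ref{thm:uniqueness}.

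\emph{Step 1 (sharpen the barrier).} Lemma~\ref{lemma.super.power-exp} as stated uses the critical exponential weight $\e^{\gamma_0|x|}$, but I would first observe that its proof goes through verbatim with $\gamma_0$ replaced by any $\gamma<\gamma_0$. The only place where \eqref{eq:momento.power-expII} is used is to make the integral $\int J(y)\e^{\gamma_0|y|}(1+|y|)^{N+\alpha}\d y$ finite, and for $\gamma<\gamma_0$ we have the domination
\[
\int J(y)\e^{\gamma|y|}(1+|y|)^{N+\alpha}\d y
= \int J(y)\e^{(\gamma-\gamma_0)|y|}\e^{\gamma_0|y|}(1+|y|)^{N+\alpha}\d y
\le \int J(y)\e^{\gamma_0|y|}(1+|y|)^{N+\alpha}\d y<\infty
\]
for every $\alpha<\alpha_0$. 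This produces a constant $\lambda=\lambda(J,\gamma,\alpha)$ such that $f(x):=\e^{\gamma|x|}(1+|x|)^{N+\alpha}$ satisfies $J\ast f-f\le\lambda f$.

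\emph{Step 2 (existence).} Set $\psi(x,t):=\e^{\lambda t}f(x)$. Then $\psi$ is continuous, nonnegative, $\psi(x,t)\to\infty$ as $|x|\to\infty$ uniformly in $t\in[0,\infty)$, and $\psi_t=\lambda\psi\ge J\ast\psi-\psi$; hence $\psi$ satisfies~\eqref{eq:def.psi}. The hypothesis on the initial datum reads $|u_0(x)|\le c_0f(x)=c_0\psi(x,0)$, so Theorem~\ref{thm:ex} furnishes a global strong solution $u$ of~\eqref{eq:0}--\eqref{eq:initial.data} given by $u(x,t)=\e^{-t}u_0(x)+(\omega(t)\ast u_0)(x)$, satisfying $|u(x,t)|\le c_0\psi(x,t)$.

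\emph{Step 3 (uniqueness).} Here comes the one subtle point, namely the mismatch between the $\le$ estimate delivered by Theorem~\ref{thm:ex} and the $\ll$ hypothesis required by Theorem~\ref{thm:uniqueness}, as flagged in the remark following Theorem~\ref{thm:ex}. I would resolve it exactly as that remark suggests: pick a second exponent $\gamma'\in(\gamma,\gamma_0)$ (or alternatively $\alpha'\in(\alpha,\alpha_0)$), repeat Step~1 to produce a larger supersolution $\psi_2(x,t)=\e^{\lambda' t}\e^{\gamma'|x|}(1+|x|)^{N+\alpha}$ of type~\eqref{eq:def.psi}, and note that $\psi\ll\psi_2$ locally uniformly in $[0,\infty)$. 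The constructed solution then automatically satisfies $|u|\le c_0\psi\ll c_0\psi_2$, and any other solution $v$ with $|v|\ll c_0\,\e^{\gamma|x|}(1+|x|)^{N+\alpha}$ also satisfies $|v|\ll c_0\psi_2$; Theorem~\ref{thm:uniqueness} applied with $\psi_2$ then forces $u=v$.

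The main obstacle in the argument is really only the bookkeeping of Step~3 (the $\le$ versus $\ll$ gap); everything else reduces to verifying that the tempered integrability condition~\eqref{eq:momento.power-expII} still controls the convolution $J\ast f$ when one uses the strictly subcritical weight $\e^{\gamma|x|}$, which is an elementary domination as above.
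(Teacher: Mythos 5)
Your proof is correct and follows the paper's intended route exactly: the paper's own argument for this theorem is a one-line appeal to Lemma~\ref{lemma.super.power-exp} together with Theorems~\ref{thm:ex} and~\ref{thm:uniqueness}, via the supersolution $\psi(x,t)=\e^{\lambda t}\e^{\gamma|x|}(1+|x|)^{N+\alpha}$. You are in fact more careful than the paper on the two points it glosses over, namely replacing $\gamma_0$ by a subcritical $\gamma<\gamma_0$ in the barrier lemma (justified by your domination argument) and closing the $\le$ versus $\ll$ gap with a second, strictly larger supersolution as suggested in the remark after Theorem~\ref{thm:ex}.
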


\begin{theorem}
Let $J$ be a kernel satisfying~\eqref{eq:momento.power-exp}. Assume that $u_0(x)\ge c_0\e^{\gamma|x|}(1+|x|)^{N+\alpha} $ for $c_0>0$, $\gamma\ge \gamma_0$ and $\alpha\ge \alpha_0$. Then there exists no nonnegative solution of   \eqref{eq:0}--\,\eqref{eq:initial.data}.
\end{theorem}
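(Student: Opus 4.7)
The plan is to argue by contradiction, using Corollary~\ref{cor:est.trace} exactly as in the previous two nonexistence theorems. Suppose that a nonnegative solution $u$ of~\eqref{eq:0}--\eqref{eq:initial.data} exists. Corollary~\ref{cor:est.trace} applied with $n=1$ then forces $(J\ast u_0)(x)<\infty$ for every $x\in\R^N$. Evaluating at $x=0$ and exploiting the symmetry $J(-y)=J(y)$, this reads
$$
\int_{\R^N} J(y)\,u_0(y)\d y<\infty.
$$

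Next I would insert the hypothesized lower bound $u_0(y)\ge c_0\e^{\gamma|y|}(1+|y|)^{N+\alpha}$. Since $\gamma\ge\gamma_0$ and $\alpha\ge\alpha_0$, monotonicity of both $\gamma\mapsto \e^{\gamma|y|}$ and $\alpha\mapsto(1+|y|)^{N+\alpha}$ gives
$$
+\infty>(J\ast u_0)(0)\ge c_0\int_{\R^N}J(y)\,\e^{\gamma|y|}(1+|y|)^{N+\alpha}\d y\ge c_0\int_{\R^N}J(y)\,\e^{\gamma_0|y|}(1+|y|)^{N+\alpha_0}\d y.
$$
The contradiction will follow once I prove that the rightmost integral is infinite, which is a critical-exponent statement about the definitions~\eqref{eq:momento.power-exp}--\eqref{eq:momento.power-expII}.

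To handle this critical case, I would mimic the continuity argument already used in Subsection~\ref{subsect:power} for the momentum function. Consider
$$
F(\alpha):=\int_{\R^N}J(y)\,\e^{\gamma_0|y|}(1+|y|)^{N+\alpha}\d y,
$$
which is nondecreasing in $\alpha$. By monotone convergence, $F$ is continuous (valued in $[0,+\infty]$): for $\alpha_n\downarrow\alpha_0$ split the integrand over $\{|y|\le1\}$ (handled by dominated convergence) and $\{|y|>1\}$ (handled by monotone convergence from above on the pointwise-decreasing family $(1+|y|)^{N+\alpha_n}$). Since $F(\alpha)=+\infty$ for every $\alpha>\alpha_0$ by the very definition of $\alpha_0$ as a supremum, continuity forces $F(\alpha_0)=+\infty$ as well, and likewise at $\gamma=\gamma_0$.

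The main obstacle is therefore not the existence argument itself, which is a one-line application of Corollary~\ref{cor:est.trace}, but the boundary case $\gamma=\gamma_0$, $\alpha=\alpha_0$ where neither supremum is attained strictly: the integral must be shown to diverge at the critical pair, which is exactly what the continuity of $F$ provides. Away from this boundary (i.e.\ when at least one of the inequalities is strict), the divergence is immediate from the definitions and no continuity argument is needed.
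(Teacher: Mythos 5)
Your overall strategy is the same as the paper's: the paper proves the analogous power-type nonexistence result (Subsection~\ref{subsect:power}) by a one-line appeal to Corollary~\ref{cor:est.trace}, and it states the present theorem without proof, asserting that it is ``straightforward.'' You correctly identify Corollary~\ref{cor:est.trace} as the engine and correctly observe that the strict cases $\gamma>\gamma_0$ or $\alpha>\alpha_0$ are immediate. The only substance is the critical pair $\gamma=\gamma_0$, $\alpha=\alpha_0$, and this is where your argument breaks down.

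The step ``handled by monotone convergence from above on the pointwise-decreasing family $(1+|y|)^{N+\alpha_n}$'' is not valid. Monotone convergence for a \emph{decreasing} sequence of nonnegative functions requires the first term to have finite integral; here $\int_{\{|y|>1\}}J(y)\e^{\gamma_0|y|}(1+|y|)^{N+\alpha_1}\d y=+\infty$ for $\alpha_1>\alpha_0$, so the hypothesis fails. (The standard counterexample $g_n=\mathbf{1}_{[n,\infty)}$, with $\int g_n=\infty$ while $\int\lim g_n=0$, shows why.) Thus you have not established right-continuity of $F$ at $\alpha_0$, and the conclusion $F(\alpha_0)=+\infty$ does not follow.

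Worse, the conclusion is genuinely false in general: $F$ can be left-continuous but jump from a finite value to $+\infty$ exactly at $\alpha_0$. Concretely, take $\gamma_0=0$ (a pure power-type kernel) and
$$J(y)\sim\frac{1}{|y|^{2N+\alpha_0}\,(\ln|y|)^{2}},\qquad|y|\to\infty.$$
Then $F(\alpha_0)=\int J(y)(1+|y|)^{N+\alpha_0}\d y\sim\int_2^\infty\frac{\d r}{r(\ln r)^{2}}<\infty$, while $F(\alpha)=\infty$ for every $\alpha>\alpha_0$, so $\alpha_0$ is indeed the supremum in~\eqref{eq:momento.power-expII} yet $F(\alpha_0)<\infty$. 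For such a $J$ the conclusion you want --- and, strictly read, the theorem itself at the exact exponents --- fails; the same phenomenon already contradicts the unproved assertion in Subsection~\ref{subsect:power} that ``the momentum of order $\gamma_0$ is infinite.'' This is an imprecision inherited from the paper, but your attempted repair of it via continuity does not work, and no argument of that type can, because the statement being aimed at is false without an additional assumption (e.g.\ explicitly requiring $\int J(y)\e^{\gamma_0|y|}(1+|y|)^{N+\alpha_0}\d y=+\infty$). Away from the boundary case your proof is fine and matches the paper's intent.
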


\section{Optimal behaviour of the initial for fast decaying kernels}
\setcounter{equation}{0}
\label{sect:optimal.II}

We face now the cases of fast decaying kernels. Although in these cases the constructions of barriers is also possible, we do not get the optimal behaviour and we need to develop a different approach.

The main point here consists in estimating the nonlocal heat kernel, $$\omega(x,t)=\e^{-t}\sum_{n=1}^\infty\dfrac{t^n}{n!}J^{\ast n}(x),$$
 associated to the equation, in order to get the optimal behaviour for the initial data.

\subsection{Compactly supported kernels}
\label{subsec:compact}

In this section we assume that $J$ is radially symmetric and compactly supported in $B_\rho$
for some $\rho>0$.

\begin{proposition}\label{prop:est.omega.compact}
    Let $J$ be a kernel such that $\mathrm{supp}(J)=B_\rho$ for some $\rho>0$. Then there exists two constants $c_3,c_4>0$
	and for any $0<\sigma<\rho$ there exist two constant $c_1,c_2$ depending on $\sigma$ such that
    \begin{equation}\label{est.compact}
    c_1\e^{-t}\e^{-(1/\sigma)|x|\ln |x|}\e^{(c_2+\ln t)|x|}\leq\omega(x,t)\leq c_3\e^{-t}\e^{-(1/\rho)|x|\ln |x|}\e^{(c_4+\ln t)|x|}\,.
    \end{equation}
\end{proposition}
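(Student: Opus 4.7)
The plan is to analyze the series $\omega(x,t) = \e^{-t}\sum_{n\geq 1}\frac{t^n J^{\ast n}(x)}{n!}$ term-by-term, exploiting the compact support of $J$ in both directions. The central observation for the upper bound is that $\supp(J^{\ast n})\subset B_{n\rho}$, so $J^{\ast n}(x)=0$ whenever $n<|x|/\rho$, while Young's inequality (using $\|J\|_1=1$) gives $\|J^{\ast n}\|_\infty \leq \|J\|_\infty$. I would therefore bound $\omega(x,t)\leq \e^{-t}\|J\|_\infty \sum_{n\geq n_0} t^n/n!$ with $n_0:=\lceil |x|/\rho\rceil$. For $|x|$ large enough that $n_0>2\e t$, successive terms decrease at least geometrically, so the tail is controlled by twice its first term. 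Stirling's lower bound $n_0!\geq (n_0/\e)^{n_0}\sqrt{2\pi n_0}$ then gives $t^{n_0}/n_0!\leq C(\e t/n_0)^{n_0}/\sqrt{n_0}$, and substituting $n_0\approx |x|/\rho$ followed by taking logarithms produces the required exponent $-(1/\rho)|x|\ln|x|+O(|x|)$, the $\ln t$ correction arising naturally from $\ln(\e t/n_0)$. The range of small $|x|$ or moderate $t$ is absorbed into the constants $c_3,c_4$.

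For the lower bound I would keep only one well-chosen term of the series. Given $\sigma<\rho$, the continuity of $J$ combined with $\supp(J)=B_\rho$ allow me to fix $\sigma_1\in(\sigma,\rho)$ and $m>0$ with $J\geq m\chi_{\sigma_1}$, so $J^{\ast n}\geq m^n \chi_{\sigma_1}^{\ast n}$. Setting $n:=\lceil |x|/\sigma\rceil$, the point $y:=x/n$ satisfies $|y|\leq \sigma<\sigma_1$. Writing $\xi_i=y+\eta_i$ for $i<n$ and $\xi_n=y-\sum_{i<n}\eta_i$ produces a decomposition $x=\sum_i\xi_i$ with each $\xi_i\in B_{\sigma_1}$ provided $|\eta_i|<\delta$ for every $i<n$ and $\bigl|\sum_{i<n}\eta_i\bigr|<\delta$, where $\delta:=\sigma_1-\sigma$. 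Consequently $J^{\ast n}(x)\geq m^n V_n$, where $V_n$ is the Lebesgue measure of this constrained ``tube''.

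The main technical hurdle is bounding $V_n$ from below by an expression whose logarithm is at worst $O(n)$; anything decaying like $n^{-\Theta(n)}$ would corrupt the leading $-(1/\sigma)|x|\ln|x|$ behavior. For this I would appeal to the local central limit theorem (or a saddle-point / large-deviations argument) for the sum of $(n-1)$ i.i.d.\ uniform-on-$B_\delta$ variables, yielding $V_n\geq c\,|B_\delta|^{n-1}\,n^{-N/2}\,\e^{-An}$ for some constants $A,c>0$ depending only on $\sigma,\sigma_1,N$. Combining $J^{\ast n}(x)\geq m^n V_n$ with Stirling applied to $t^n/n!$ and substituting $n\approx|x|/\sigma$ then gives the lower bound with exponent $-(1/\sigma)|x|\ln|x|+O(|x|)$; the extra $\e^{-An}$, $n^{-N/2}$ and Stirling prefactors all contribute $O(|x|)$ or smaller terms that are absorbed into the choice of $c_1,c_2$.
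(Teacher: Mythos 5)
Your upper bound follows the paper's route exactly: $\supp(J^{\ast n})\subset B_{n\rho}$ kills the terms with $n<|x|/\rho$, Young's inequality gives $\|J^{\ast n}\|_\infty\le\|J\|_\infty$, the tail $\sum_{n\ge K}t^n/n!$ is compared to its first term (the paper via the identity $\sum_{n\ge K}t^n/n!\le \e^t t^K/K!$, you via geometric decay of consecutive terms when $K>2\e t$), and Stirling finishes. The lower bound is genuinely different. The paper proves the key estimate $J^{\ast n}\ge c\mu^n$ on $B_{n\sigma}$ by induction, taking $\mu$ to be the $J$-mass of a thin slab $\{\sigma'<y_N<\rho\}$ near the edge of the support and checking that convolving against that slab maps a point of $B_{(n+1)\sigma}$ back into $B_{n\sigma}$. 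You instead minorize $J$ by $m\chi_{\sigma_1}$ and lower-bound $\chi_{\sigma_1}^{\ast n}(x)$ by the volume of your tube via the local central limit theorem. Where it applies, your argument in fact gives slightly more: since the uniform law on $B_\delta$ is centered, the probability that $|\sum_{i<n}\eta_i|<\delta$ is of order $n^{-N/2}$, so you may take $A=0$, and the polynomial loss is indeed harmless at the level of the exponentials. The paper's induction is more elementary (no local CLT needed) and, more importantly, more robust, for the reason below.

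The one genuine gap is the claim that continuity of $J$ together with $\supp(J)=B_\rho$ yields $J\ge m\chi_{\sigma_1}$ for some $m>0$. This does not follow: $\supp(J)=B_\rho$ only says that $\{J>0\}$ is dense in $B_\rho$, and a continuous $J$ may vanish on an interior sphere $\{|y|=r\}$ with $r<\sigma_1$ (in dimension one, $J(y)=c\,(1-|y|)_+(|y|-r)^2$), in which case no such $m$ exists. The paper's proof sidesteps this because it only needs $\int_{\{\sigma'<y_N<\rho\}}J>0$ and $\min_{B_{n_0\sigma}}J^{\ast n_0}>0$, both of which do follow from density of $\{J>0\}$ in $B_\rho$. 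Your argument is repairable: $J^{\ast2}$ is continuous and strictly positive on the interior of $B_{2\rho}$, hence dominates $m\chi_{2\sigma_1}$ for any $\sigma_1<\rho$, so you can run the tube computation on $J^{\ast2}$ and handle odd $n$ by one extra convolution. A smaller, shared caveat: dismissing the regime $K\le 2\e t$ as "absorbed into the constants" is not innocent, since $c_3,c_4$ are not allowed to depend on $t$; the paper's own write-up has the same slack (it silently drops an $\e^t$ factor), and the estimate is in any case only applied with $t$ ranging over compact sets and $|x|\to\infty$, where both arguments are sound.
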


We begin with the upper estimate:

\begin{lemma}\label{lem:omega.upper}
    Let $J$ be a kernel such that $\mathrm{supp}(J)=B_\rho$ for some $\rho>0$.
	Then there exist $c_3,c_4>0$ such that the following upper estimate holds:
    \begin{equation*}
   \sum_{n=1}^\infty\dfrac{t^n}{n!}J^{\ast n}(x)\leq c_3\e^{-(1/\rho)|x|\ln |x| + (c_4+\ln t)|x|}\,.
    \end{equation*}
\end{lemma}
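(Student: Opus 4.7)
The plan is to exploit the support condition on $J$ to truncate the series and then estimate the remaining exponential sum via Stirling's formula. An easy induction gives $\mathrm{supp}(J^{\ast n})\subseteq B_{n\rho}$, so $J^{\ast n}(x)=0$ whenever $n<|x|/\rho$. Setting $n_0:=\lceil |x|/\rho\rceil$, the target sum reduces to
$$\sum_{n\geq 1}\frac{t^n}{n!}J^{\ast n}(x)=\sum_{n\geq n_0}\frac{t^n}{n!}J^{\ast n}(x).$$
I would then control each factor $J^{\ast n}(x)$ by the trivial pointwise bound $J^{\ast n}(x)\leq \|J\|_\infty$, which follows from Young's convolution inequality together with $\|J\|_1=1$ (note $J\in\L^\infty$ since it is continuous and compactly supported).

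The core estimate becomes $\sum_{n\geq n_0} t^n/n!$. Stirling in the form $n!\geq (n/\e)^n$ gives $t^n/n!\leq (\e t/n)^n$. Since the ratio of consecutive terms in $\sum t^n/n!$ equals $t/(n+1)$, the tail decays geometrically with ratio $\leq 1/2$ as soon as $n\geq 2\e t$. I would therefore split into two regimes. In the main regime $n_0\geq 2\e t$ (equivalently $|x|\geq 2\e t\rho$) the tail is dominated by a constant multiple of its first term $(\e t/n_0)^{n_0}$. In the complementary regime $|x|<2\e t\rho$ the sum is bounded by the full exponential $\e^t$; there the right-hand side of the claimed inequality dominates $\|J\|_\infty\,\e^t$ trivially once $c_3,c_4$ are adjusted, because in that range $|x|\ln|x|$ is controlled by $|x|(1+\ln t)$ up to constants depending only on $\rho$.

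Finally, substituting $n_0\simeq |x|/\rho$ into $(\e t/n_0)^{n_0}$ and taking logarithms yields
$$n_0\bigl(1+\ln t-\ln n_0\bigr)=-\tfrac{1}{\rho}|x|\ln|x|+\tfrac{|x|}{\rho}\bigl(1+\ln\rho+\ln t\bigr)+O(1),$$
which is exactly of the form $-(1/\rho)|x|\ln|x|+(c_4+\ln t)|x|$ once the $\rho$-dependent prefactors (and the rounding error from $|x|/\rho$ to $n_0$, which is $O(1)$ in the exponent) are absorbed into $c_3$ and $c_4$. The main obstacle I expect is stitching the two regimes together cleanly so that the resulting constants are genuinely independent of $x$ and $t$: this is pure bookkeeping, but it must be done with care so that no $x$- or $t$-dependent losses creep in through the Stirling remainder or the transition between the sub- and super-critical ranges of $n_0$ versus $t$.
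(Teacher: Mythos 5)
Your strategy is the same as the paper's: kill the terms with $n<|x|/\rho$ using $\supp(J^{\ast n})\subseteq B_{n\rho}$, bound the surviving $J^{\ast n}(x)$ by $\|J\|_\infty$ (via $\|J^{\ast n}\|_\infty\leq\|J\|_\infty\|J\|_1^{n-1}=\|J\|_\infty$), and estimate the tail $\sum_{n\geq K}t^n/n!$ with $K\approx|x|/\rho$ via Stirling. The only technical difference in the main estimate is that the paper evaluates the tail through the integral remainder formula $\sum_{n\geq K}t^n/n!=\int_0^t\frac{(t-s)^K}{K!}\e^s\d s\leq \e^t t^K/K!$, whereas you use the ratio of consecutive terms; both yield the same leading behaviour $t^K/K!\approx\e^{-(1/\rho)|x|\ln|x|+(c+\ln t)|x|}$.

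The one place where you genuinely diverge is the two-regime split, and that is where your argument breaks. In the regime $|x|<2\e t\rho$ you claim that $\|J\|_\infty\e^t$ is dominated by $c_3\e^{-(1/\rho)|x|\ln|x|+(c_4+\ln t)|x|}$ after adjusting constants. It is not: fix $|x|=2$, say, and let $t\to\infty$; the left-hand side grows like $\e^t$ while the right-hand side is only polynomial in $t$, so no $t$-independent $c_3,c_4$ can work. Controlling $\ln|x|$ by $1+\ln t$ does not save you, because the exponent you gain is proportional to $|x|$, not to $t$. To be fair, you have put your finger on a genuine soft spot of the paper's own proof, which produces the factor $\e^t$ from the remainder formula and then silently drops it in the next display; the lemma as literally stated (constants uniform in $t$) fails in exactly this regime, since $\sum_n t^nJ^{\ast n}(x)/n!$ grows exponentially in $t$ for fixed $x$. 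The estimate is only ever applied for $t$ in a compact interval $[0,T]$ (the later proofs write the constant as $c(T)$), so the correct reading --- and the correct repair of both your proof and the paper's --- is to let $c_3$ depend, locally boundedly, on $t$; with that convention the $\e^t$ factor is harmless, your second regime disappears, and the single Stirling estimate suffices.
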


\begin{proof}
    First it is clear that since $J$ has unit mass, then $\|J^{\ast n}\|_\infty\leq \|J\|_\infty=c>0$.
    Second, since the support of $J^{\ast n}$ is exactly $B_{(n+1)\rho}$, then it is enough
    to estimate, for fixed $t>0$, the function
    $$
        f(x):=\sum_{n\ge\big[|x|/\rho\big]}\frac{t^n}{n!}
    $$
    where $[r]$ stands for the entire part of $r>0$, that is, the greatest integer below (or equal to) $r$.
    Using the Taylor expansion of the exponential, we get that
    \begin{equation}\label{eq:f(x)}
        \sum_{n\ge K}\frac{t^n}{n!}=\int_0^t\frac{(t-s)^K}{K!}\e^s\d s\le \frac{\e^t t^K}{K!}\,.
    \end{equation}
    Then we use Stirling's formula for the factorial: $K!=\sqrt{2\pi K}(K/{\rm e})^K(1+O(1/K))$
    to obtain that there exists $c'>0$ such that:
    $$
        \sum_{n\ge K}\frac{t^n}{n!}\le c' \e^t t^K K^{-1/2}\Big(\frac{{\rm e}}{K}\Big)^K\,.
    $$
    Coming back with $K=\big[|x|/\rho\big]$ we then obtain:
    $$
        \sum_{n\ge\big[|x|/\rho\big]}\frac{t^n}{n!}\leq cc't^{[|x|/\rho]}\e^{-(1/2)\ln[|x|/\rho]} e^{-[|x|/\rho](\ln [|x|/\rho]-1)}\,.
    $$
    Then it follows that for some $c_3,c_4>0$,
    $$
       \sum_{n=1}^\infty\dfrac{t^n}{n!}J^{\ast n}(x)\leq c_3 \e^{-(1/\rho) |x|\ln |x| + (c_4+\ln t)|x|/\rho}\,,
    $$
    which is the desired estimate.
\end{proof}

\begin{lemma}\label{lem:J*.lower}
    For any $0<\sigma<\rho$, there exist $c>0$ and $\mu\in(0,1)$ depending only on $J$ and $\sigma$ such that
    \begin{equation}
    \label{eq:J*.lower}
        \forall n\geq1\,,\ \forall\,x\in B_{n\sigma}\,,\quad J^{\ast n}(x)\geq c\mu^n\,.
    \end{equation}
\end{lemma}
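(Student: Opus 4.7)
The plan is to establish the bound by induction on $n$, driven by a single favourable region on which $J$ is uniformly bounded below. First I would locate this region: because $\supp(J)=B_\rho$ forces $\{J>0\}$ to be dense in $B_\rho$, and because $J$ is radially symmetric and continuous, I can pick a radius $r^*\in(\sigma,\rho)$ with $J(y)>0$ on the sphere $|y|=r^*$, and then shrink to $\delta>0$ and $\eta>0$ with $\sigma<r^*-\delta<r^*+\delta<\rho$ and $J(y)\geq\eta$ for every $y$ in the annulus $A:=\{r^*-\delta\leq|y|\leq r^*+\delta\}$.

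Next I would run the induction. Assuming $J^{*n}(z)\geq c\mu^n$ for every $z\in B_{n\sigma}$, restricting the integration in
\[
J^{*(n+1)}(x)=\int_{\R^N} J^{*n}(x-y)\,J(y)\,\d y
\]
to $\{y\in A,\ x-y\in B_{n\sigma}\}$ immediately produces
\[
J^{*(n+1)}(x)\;\geq\;\eta\,c\mu^n\,\bigl|A\cap B_{n\sigma}(x)\bigr|.
\]
The step will close with $\mu:=\eta\tilde c$ as soon as I exhibit a constant $\tilde c>0$ such that $|A\cap B_{n\sigma}(x)|\geq\tilde c$ uniformly for all $n\geq n_0$ and all $x\in B_{(n+1)\sigma}$.

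The hard part will be this uniform geometric estimate, because the worst case $|x|=(n+1)\sigma$ makes the thin annulus $A$ (a fixed set near the origin) and the huge ball $B_{n\sigma}(x)$ (centred far away) overlap only in a thin cap on the far side of $A$. Writing $y=r\omega$ with $r\in[r^*-\delta,r^*+\delta]$ and $\omega\in S^{N-1}$, the constraint $|y-x|\leq n\sigma$ becomes $\omega\cdot\hat x\geq\tau_n(r)$ where $\tau_n(r)=(r^2+|x|^2-n^2\sigma^2)/(2r|x|)$. A direct expansion at $|x|=(n+1)\sigma$ gives $\tau_n(r)=\sigma/r+O(1/n)$ uniformly in $r\in[r^*-\delta,r^*+\delta]$, and since we arranged $\sigma<r^*-\delta\leq r$ the limit $\sigma/r$ is strictly less than $1$. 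Thus for $n\geq n_0$ the set of admissible $\omega$ contains a spherical cap of surface measure bounded below uniformly in $n$, and integrating over $r$ yields $\tilde c$.

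Finally I would handle the finitely many small values $n<n_0$ and check $\mu\in(0,1)$. For each $n\geq 2$ a short Baire/density argument shows that $J^{*n}$ is continuous and strictly positive on the open ball $B_{n\rho}$, hence bounded below on the compact set $\overline{B_{n\sigma}}$; shrinking $c$ if necessary absorbs these exceptional cases. The bound $\mu=\eta\tilde c<1$ comes from $\eta|A|\leq\int J=1$ and $\tilde c\leq|A|$, both of which are strict under our choice of $A$ because $J$ is positive outside $A$ as well, again by density of $\supp(J)$.
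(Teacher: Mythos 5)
Your argument is correct and follows the same inductive skeleton as the paper: fix a favourable region near the boundary of $\supp J$, show that for $x\in B_{(n+1)\sigma}$ the translates $x-y$ of that region land in $B_{n\sigma}$ so the induction hypothesis applies, and absorb the finitely many small $n$ by positivity of $J^{\ast n}$ on $\overline{B_{n\sigma}}$. The only real difference is the choice of favourable region and the resulting geometry. The paper exploits radial symmetry of $J^{\ast n}$ to rotate $x$ onto the $e_N$-axis and then integrates over the slab $\{\sigma'<y_N<\rho\}$ with $\sigma'=(\sigma+\rho)/2$; since $|x_N-y_N|\le (n+1)\sigma-\sigma'$ and the transverse coordinates are bounded by $\rho$, the bound $|x-y|\le n\sigma$ is a one-line Pythagorean computation, and the induction closes exactly with $\mu=\int_{\text{slab}}J$. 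You instead fix a full annulus $A$ where $J\ge\eta$ and must estimate $|A\cap B_{n\sigma}(x)|$ from below via a spherical-cap argument, yielding $\mu=\eta\tilde c$ with the extra geometric constant $\tilde c$. Both routes work; the slab is a bit more economical, while your annulus makes explicit that only the positivity of $J$ on some shell inside $(\sigma,\rho)$ matters. Two small points to tighten: you verify the cap bound only at $|x|=(n+1)\sigma$ and declare it the worst case — this is true because $\partial_{|x|}\tau_n(r)=(|x|^2-r^2+n^2\sigma^2)/(2r|x|^2)>0$ for $|x|>r$, but it should be stated; and for $\mu<1$ the clean chain is $\mu=\eta\tilde c\le\eta|A|\le\int_A J<\int J=1$, using only that $J>0$ on a set of positive measure outside $A$ (you do not need $\tilde c<|A|$ to be strict).
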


\begin{proof}
	For some technical reason, we need to introduce $\sigma'=(\sigma+\rho)/2\in(\sigma,\rho)$. Since $J$ is symmetric and compactly
	supported in $B_\rho$, noting $y=(y_1,\dots,y_N)$ we set
    $$\mu:=\int_{\{\sigma'<y_N<\rho\}}J(y)\d y>0\,,$$
    which is invariant under rotations of the coordinates.
	Recall also that $\supp(J^{\ast n})=B_{n\rho}$ which strictly contains $B_{n\sigma}$,
    so that for $n_0\geq1$ to be fixed later, we can define
    $$
    c:=\min_{B_{n_0\sigma}} J^{\ast n_0}/\mu>0\,.
    $$
    With this choice, property \eqref{eq:J*.lower} is valid for $n=n_0$
    and we shall show the property by induction for $n>n_0$.
    So we us assume that in $B_{n\sigma}$, $J^{\ast n}\geq c\mu^n$ and
    for $x\in B_{(n+1)\sigma}$ let us estimate:
    $$
        J^{\ast(n+1)}(x)=\int_{\R^N}J(x-y)J^{\ast n}(y)\d y=\int_{\R^N}J(y)J^{\ast n}(x-y)\d y\,.
    $$
    We assume without loss of generality that $x=(0,\dots,0,x_N)$
    $$
        J^{\ast(n+1)}(x)\geq\int_{\{ \sigma'<y_N<\rho \}}J(y)J^{\ast n}(x-y)\d y\,.
    $$
	Then since $|x|<(n+1)\sigma$, if $y$ satisfies $\sigma'<y_N<\rho$, we have:
    $$
        \begin{aligned}
        |x-y|^2= & |x_N-y_N|^2+\sum_{i=1}^{(N-1)}|y_i|^2\,,\\
        \leq & \Big((n+1)\sigma-\sigma'\Big)^2+\rho^2\,,\\
        \leq & (n\sigma+\sigma-\sigma')^2+\rho^2\,.\\
        \end{aligned}
    $$
	Since $\sigma<\sigma'$ we have for $n$ big enough that $(n\sigma+\sigma-\sigma')^2+\rho^2\leq(n\sigma)^2$
	and it follows that for such $x,y$, $|x-y|\leq n\sigma$. Hence we fix $n_0=n_0(\sigma,\sigma',\rho)$
	such that for any $n\geq n_0$ and $x\in B_{(n+1)\sigma}$ we have:
    $$
        J^{\ast(n+1)}(x)\geq\int_{\{ \sigma'<y_N<\rho \}}(c\mu^n)J(y)\d y=c\mu^{n+1}\,.
    $$
	Thus the property is proved by induction for $n>n_0$. Finally, the cases when $n\leq n_0$ only concern a finite number
	of terms so that, up to redefining $c$ and $\mu$ taking the min over all the first terms, we obtain~\eqref{eq:J*.lower}
	for any $n\geq1$.
\end{proof}

\begin{proof}[Proof of Proposition \ref{prop:est.omega.compact}]
    The upper estimate follows directly from Lemma \ref{lem:omega.upper}, so we turn to the lower estimate. We
    first notice that for any fixed $n$
    $$\omega(x,t)\geq \e^{-t}\frac{t^nJ^{\ast n}(x)}{n!}\,.$$
	If we take $n=[|x|/\sigma]-1$ then $x\in B_{n\sigma}$ and we may use
    Lemma~\ref{lem:J*.lower}:  there exists $c,\mu>0$ depending on $\sigma$ such that
    $$\omega(x,t)\geq c\e^{-t}\frac{(\mu t)^{[|x|/\sigma]}}{[|x|/\sigma]!}\,.$$
    Thus, using again Stirling's formula for the factorial, we obtain that for some $c,c'>0$:
    $$\omega(x,t)\geq c\e^{-t}\e^{-(1/\sigma)|x|\ln |x|+(c'+\ln (\mu t)|x|}\,.$$
    Hence, we have obtained indeed that there exists $c_1,c_2>0$ such that
    $$\omega(x,t)\geq c_1\e^{-t}\e^{-(1/\sigma)|x|\ln |x|}\e^{(c_2+\ln t)|x|}\,,$$
    which ends the proof.
\end{proof}

\begin{theorem}
    Let $J$ be a radially symmetric kernel, compactly supported in $B_\rho$ for some $\rho>0$. Let $u_0$ be a locally bounded initial data and $c_0>0$. Then the following holds:\\[6pt]
    $(i)$ If $|u_0(x)|\leq c_0\e^{\alpha|x|\ln|x|}$ for some $0<\alpha<1/\rho$, then there exists a global solution of \eqref{eq:0}--\,\eqref{eq:initial.data}.\\[6pt]
    $(ii)$ If $u_0(x)\geq c_0\e^{\beta |x|\ln|x|}$ for some $\beta>1/\rho$, then there exists no nonnegative solution of \eqref{eq:0}--\,\eqref{eq:initial.data}.
\end{theorem}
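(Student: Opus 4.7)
The plan is to apply Theorem~\ref{thm:ex} for part~$(i)$, building the required supersolution via the heat kernel $\omega$, and to use the lower estimate of Proposition~\ref{prop:est.omega.compact} combined with the minimal-solution construction underlying Corollary~\ref{cor:est.trace} for part~$(ii)$.

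\emph{Part $(i)$.} Fix an auxiliary exponent $\alpha'\in(\alpha,1/\rho)$ and set $f(x):=\e^{\alpha'|x|\ln|x|}$, extended continuously at the origin. Define
$$
\psi(x,t):=\bigl(\omega(t+1)\ast f\bigr)(x)+\e^{\lambda t}(1+|x|^2),
$$
with $\lambda>0$ chosen so that the second summand is a supersolution associated to the classical barrier $1+|x|^2$ (valid since $J$ has compact support, hence finite second moment). That second summand clearly satisfies~\eqref{eq:def.psi}, providing in particular a uniform-in-$t$ growth at infinity. For the first summand, the upper estimate of Proposition~\ref{prop:est.omega.compact} shows that the integrand in $\omega(t+1)\ast f$ is controlled, for $|y|$ large, by a function decaying like $\e^{(\alpha'-1/\rho)|y|\ln|y|}$ up to lower-order factors; since $\alpha'<1/\rho$, the convolution is finite and continuous on $\R^N\times[0,\infty)$. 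The identity $\omega_t-J\ast\omega+\omega=\e^{-t}J\ge 0$ makes $\omega$ a supersolution of~\eqref{eq:0}, and convolution with the nonnegative $f$ transmits this property. Hence $\psi$ fulfils~\eqref{eq:def.psi}.

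It remains to verify $\psi(x,0)\ge C^{-1}|u_0(x)|$ for some $C>0$. By continuity and positivity of $\omega(\cdot,1)$ near the origin, there is $c>0$ with $\omega(z,1)\ge c$ for $|z|\le 1/2$, so $(\omega(1)\ast f)(x)\ge c\int_{|z|\le 1/2}f(x-z)\d z$. A Taylor expansion of $r\mapsto r\ln r$ on $[|x|-1/2,|x|+1/2]$ gives $f(x-z)\ge f(x)\,|x|^{-\alpha'/2}\e^{-\alpha'/2}$ for $|x|$ large, whence $(\omega(1)\ast f)(x)\ge c'\,\e^{\alpha'|x|\ln|x|-(\alpha'/2)\ln|x|}\ge C^{-1}\e^{\alpha|x|\ln|x|}$ for $|x|$ large, using $\alpha'>\alpha$. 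On bounded sets, continuity together with the added summand $1+|x|^2$ handles the rest. Theorem~\ref{thm:ex} then produces the desired global solution.

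\emph{Part $(ii)$.} Suppose, for contradiction, that a nonnegative solution $u$ exists. Exactly as in the proof of Corollary~\ref{cor:est.trace}, truncating $u_0$ by $\chi_n$ yields bounded solutions $u_n$ that can be compared with $u$ (the comparison being legitimate thanks to the classical barrier $\e^{\lambda t}(1+|x|^2)$), so the monotone limit $\underline{u}(x,t)=\e^{-t}u_0(x)+(\omega(t)\ast u_0)(x)$ is finite everywhere. In particular $(\omega(t)\ast u_0)(x)<\infty$. Since $\beta>1/\rho$ we may pick $\sigma\in(1/\beta,\rho)$, and then $\beta-1/\sigma>0$. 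The lower bound of Proposition~\ref{prop:est.omega.compact} together with $u_0(y)\ge c_0\,\e^{\beta|y|\ln|y|}$ gives
$$
(\omega(t)\ast u_0)(x)\ge c_1c_0\,\e^{-t}\int \e^{-(1/\sigma)|y-x|\ln|y-x|+(c_2+\ln t)|y-x|+\beta|y|\ln|y|}\d y.
$$
For $|y|\to\infty$ at fixed $x$ we have $|y-x|\ln|y-x|=|y|\ln|y|+O(|y|)$, so the exponent is asymptotic to $(\beta-1/\sigma)|y|\ln|y|\to+\infty$; the integral diverges, the desired contradiction.

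\emph{Main obstacle.} The subtle step lies in~$(i)$: convolving $\omega(\cdot,1)$ with $f$ introduces a polynomial loss of order $|x|^{-\alpha'/2}$, because $r\mapsto r\ln r$ oscillates by $O(\ln|x|)$ on unit-size intervals. This prevents taking the natural choice $f=\e^{\alpha|x|\ln|x|}$ directly; one must insert a strictly larger exponent $\alpha'\in(\alpha,1/\rho)$, and the strict inequality $\alpha<1/\rho$ in the hypothesis provides exactly the slack needed to absorb the loss.
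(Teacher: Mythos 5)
Your proposal is correct. Part $(ii)$ is essentially the paper's argument: truncate $u_0$, compare the bounded approximations with the putative solution $u$ via an elliptic barrier, conclude that $\omega(t)\ast u_0<\infty$, and contradict this with the lower bound of Proposition~\ref{prop:est.omega.compact}; you merely spell out the divergence of the convolution (via $|y-x|\ln|y-x|=|y|\ln|y|+O(|y|)$) that the paper only asserts. Part $(i)$ takes a genuinely different, and in fact more structured, route. The paper does not build a supersolution satisfying~\eqref{eq:def.psi} at all: it observes directly from the upper bound on $\omega$ that $\omega(t)\ast u_0$ converges, notes that the truncated solutions $u_n$ are uniformly dominated by $\phi(x,t)=\e^{-t}u_0(x)+(\omega(t)\ast u_0)(x)$, and passes to the monotone limit as in Theorem~\ref{thm:ex}. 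You instead manufacture a bona fide $\psi$ — the shifted-kernel convolution $\omega(t+1)\ast f$ with $f=\e^{\alpha'|x|\ln|x|}$, $\alpha'\in(\alpha,1/\rho)$, plus the quadratic barrier $\e^{\lambda t}(1+|x|^2)$ to guarantee the uniform-in-$t$ growth at infinity required by~\eqref{eq:def.psi} — and then invoke Theorem~\ref{thm:ex} as a black box. This is precisely the device the paper reserves for the uniqueness statement (Lemma~\ref{lem:psi} and Theorem~\ref{thm:sop.compact.minimal}), imported into the existence proof. What your version buys is that existence comes packaged with the comparison estimate $|u|\leq C\psi$ and sits squarely inside the framework of Section~\ref{sect:ex.un}; the price is the extra lower bound $(\omega(1)\ast f)(x)\gtrsim\e^{\alpha|x|\ln|x|}$ at $t=0$, which you correctly handle by absorbing the $|x|^{-\alpha'/2}$ loss into the slack $\alpha'>\alpha$. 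The paper's version avoids that step entirely because its dominating function is the limit solution itself rather than a supersolution that must majorize $u_0$ pointwise at $t=0$. Both arguments are sound.
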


\begin{proof}
We consider the sequence $\{u_n\}$, given by the solution of (\ref{eq:0}) with initial data $u_n(x,0)=u_0(x)\chi_n$. This functions are given by convolution:
$$
u_n(x,t)=\e^{-t}u_0(x)\chi_n(x)+(\omega(t)\ast u_0\chi_n)(x).
$$

\noindent$(i)$ -- If $|u_0(x)|\leq c_0\e^{\alpha|x|\ln|x|}$ for some $0<\alpha<1/\rho$, then $\omega(t)\ast u_0 $ is defined for any $t>0$, and the sequence $\{u_n\}$ is uniformly bounded by $\phi(x,t)=\e^{-t}u_0(x)+\omega(t)\ast u_0$. Therefore, following the same argument given in the proof of Theorem \ref{thm:ex} we get a global minimal solution, which is given by convolution.

\noindent$(ii)$ -- Let us assume that $u_0(x)\geq c_0\e^{\beta |x|\ln|x|}$ for some $\beta>1/\rho$, and that there exists a nonnegative solution $u$.
Then notice first that there exists also a $\sigma\in(0,\rho)$ such that $\beta>1/\sigma$ and either the negative or the positive part
of $u_0$ satisfies a similar estimate, for instance:
$$(u_0)_+(x)\geq c_0\e^{\beta |x|\ln|x|}\,.$$
We use an approximation by compactly supported initial data $(u_0)_+\chi_n$ and we see that by comparison, in $\R^N\times\R_+$ we have
$$\e^{-t}(u_0)_+\chi_n + \omega(t)\ast (u_0)_+\chi_n\leq |u|<\infty\,.$$
But on the other hand, since $\beta>1/\sigma$, $\omega(t)\ast (u_0)_+\chi_n\to +\infty$ as $n\to\infty$, which is a contradicton
with the bound above. The conclusion is that no solution can exist.

\end{proof}

\begin{remark}\rm Several remarks are to be made:
\begin{enumerate}
    \item Actually the estimates on $\omega$ allow us to treat more delicate situations like for instance (taking $\rho=1$ for simplicity),
    $u_0(x)\sim\e^{|x|\ln|x|-|x|\ln(\ln|x|)}$ which gives rise to a global solution.
	\item Finer estimates (in $x$) can be obtained by using a better expansion of the factorial -- which
    is known, but anyway, the $(c+\ln t)|x|$-terms cannot be avoided which implies an estimate of the order of $|x|$
    in the exponential.

	\item An interesting question concerns the critical growth, when $u_0(x)$ behaves like $\e^{(1/\rho)|x|\ln|x|}$.
	We shall see in the gaussian case below -- see Theorem \ref{thm:gaussian.ex} $(iii)$ -- that solutions with critical initial data
	blow up in finite time, as it is the case for the local heat equation.
	We are convinced that it is also the case for compactly supported kernels but the lower estimate of $\omega$ which implies
	a $\sigma$ strictly less than $\rho$ does not allow us to conclude.

    \item The same arguments as in Remark~\ref{rem.nonex} are valid in this case, replacing the condition on the initial data by $\omega(t)\ast (u_0)_-<\infty$ and $\omega(t)\ast(u_0)_+=\infty$ for any $t>0$.
\end{enumerate}
\end{remark}

Now we turn to uniqueness results. In order to do so, we use the fact that, given a function $f\geq0$, the function $\omega(t)\ast f$ is a supersolution of~\eqref{eq:0}.
We shall prove first that the constructed solution remains inside a suitable class of solutions, and then use the supersolution provided by
$\omega(t+1)*f$ in order to apply the comparison
principle and get uniqueness.

\begin{proposition}
    Let $J$ be a radially symmetric kernel, compactly supported in $B_\rho$ for some $\rho>0$. Let $u_0$ be a locally bounded initial data,  $\alpha\in(0,1/\rho)$ and $c_0>0$. If  $|u_0(x)|\leq c_0\e^{\alpha|x|\ln|x|}$, there exists $A>0$ such that
    $$|u(x,t)|\leq C\e^{\beta|x|\ln|x|}\,.$$
    for any $\beta\in(\alpha,1/\rho)$.
 \end{proposition}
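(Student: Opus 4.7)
By Theorem~\ref{thm:ex} the constructed solution obeys the pointwise bound $|u(x,t)|\leq \e^{-t}|u_0(x)|+(\omega(t)\ast|u_0|)(x)$. The first summand is clearly $\leq c_0\e^{\alpha|x|\ln|x|}\leq C\e^{\beta|x|\ln|x|}$ since $\beta>\alpha$, so the whole task reduces to estimating the convolution term, and I would do this by exploiting the uniform upper bound on $\omega$ from Lemma~\ref{lem:omega.upper}:
\begin{equation*}
(\omega(t)\ast|u_0|)(x)\leq C(t)\int_{\R^N}\e^{-(1/\rho)|x-y|\ln|x-y|+c(t)|x-y|+\alpha|y|\ln|y|}\,\d y.
\end{equation*}

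Fix $\beta\in(\alpha,1/\rho)$ and choose a small parameter $\eta>0$ with $\alpha(1+\eta)<\beta$. I would partition the integration domain by comparing $|y|$ with $|x|$ into three regions $A=\{|y|\leq(1-\eta)|x|\}$, $B=\{(1-\eta)|x|<|y|<(1+\eta)|x|\}$, $D=\{|y|\geq(1+\eta)|x|\}$, and show that each contributes at most $C(t)\e^{\beta|x|\ln|x|}$. On $B$ one uses $\alpha|y|\ln|y|\leq\alpha(1+\eta)|x|\ln|x|+O(|x|)$ to pull the growth factor out; together with the trivial estimate $\int_{\R^N}\omega(x-y,t)\,\d y\leq 1$, this gives a contribution $\leq C\e^{\alpha(1+\eta)|x|\ln|x|+O(|x|)}\leq C\e^{\beta|x|\ln|x|}$ by the choice of $\eta$. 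On $A$ and on the near part of $D$ (say $|y|\leq 2|x|$) one has $|x-y|\geq\eta|x|$, so $\omega$ carries decay at rate at least $(\eta/\rho)|x|\ln|x|$; optimising the exponent $\alpha|y|\ln|y|-(1/\rho)|x-y|\ln|x-y|$ shows that its maximum is attained at the interface $|y|\approx(1\pm\eta)|x|$ and equals $[\alpha+\eta(\alpha-1/\rho)]|x|\ln|x|+O(|x|)$, which is strictly below $\alpha|x|\ln|x|$ because $\alpha<1/\rho$.

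The main obstacle will be the far-field part of $D$, namely $|y|\geq 2|x|$: the crude triangle estimate $|x-y|\leq|x|+|y|\leq\tfrac{3}{2}|y|$ would force the stronger requirement $\alpha<2/(3\rho)$. To capture the whole range $\alpha<1/\rho$ one must use the sharper Taylor expansion
\begin{equation*}
|x-y|\ln|x-y|\geq|y|\ln|y|-|x|\ln|y|-O(|x|), \qquad |y|\geq 2|x|,
\end{equation*}
so that the dominant decay $(\alpha-1/\rho)|y|\ln|y|<0$ absorbs the error $(|x|/\rho)\ln|y|$ uniformly in $|y|$. The resulting exponent is decreasing in $|y|$ for $|y|\geq 2|x|$ and its value at $|y|=2|x|$ is $(2\alpha-1/\rho)|x|\ln|x|+O(|x|)$; since $\alpha<1/\rho$ yields $2\alpha-1/\rho<\alpha<\beta$, this far-field contribution is again bounded by $C\e^{\beta|x|\ln|x|}$, completing the estimate.
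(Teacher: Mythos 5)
Your argument is correct, but it takes a more laborious route than the paper. The paper's proof is essentially one line: it bounds $\omega(y,t)\leq c\,\e^{-(1/\rho)|y|\ln|y|+c(T)|y|}$ (as you do) and then applies the global subadditivity inequality
\begin{equation*}
|x-y|\ln|x-y|\;\leq\;|x|\ln|x|+|y|\ln|y|+|x|+|y|,
\end{equation*}
which follows from $\ln(1+t)\leq t$. This pulls the factor $\e^{\alpha|x|\ln|x|+\alpha|x|}$ out of the convolution and reduces everything to the convergence of the single $x$-independent integral $\int\e^{(\alpha-1/\rho)|y|\ln|y|+C|y|}\d y<\infty$; the linear term $\alpha|x|$ is then absorbed into $\beta|x|\ln|x|$ for $|x|$ large, exactly as in your region $B$. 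Your three-region decomposition with the interface optimisation and the far-field Taylor expansion reaches the same conclusion over the full range $\alpha<1/\rho$, and it has the virtue of making explicit where the exponent is extremal and of not requiring one to guess the subadditivity inequality; the price is a longer case analysis. Two small points: in your far-field display the error term should be $O(|y|)$ rather than $O(|x|)$ (it comes from $(|y|-|x|)\ln 2$), which is harmless since it is dominated by $(\alpha-1/\rho)|y|\ln|y|$ and equals $O(|x|)$ at the relevant endpoint $|y|=2|x|$; and the monotonicity of the far-field exponent holds only for $|x|$ beyond a threshold, with small $|x|$ handled by local boundedness — the same caveat the paper hides in ``for $x$ large enough''.
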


\begin{proof}
Let us split again the initial data into its positive and negative part, $|u_0|=(u_0)_++(u_0)_-$. Then
$$
u^+(x,t)=\e^{-t}(u_0)_+(x)+\big(\omega(t)\ast (u_0)_+\big)(x)\le \e^{-t} c_0\e^{\alpha|x|\ln|x|}+ c_0  \omega(t)\ast e^{\alpha|x|\ln|x|}
$$
and the same holds for $u^-$.
For $t\in(0,T)$, we have the estimate $\omega(x,t)\leq c\e^{-(1/\rho)|x|\ln|x|+c(T)|x|}$\,,
    for some $c>0$ and $c(T)\in\R$. Hence,
$$
\begin{aligned}
    (\omega(t)\ast \e^{\alpha|x|\ln|x|})(x)&\leq c'\int_{\R^N}\e^{-(1/\rho)|y|\ln|y|+c'(T)|y|+\alpha|x-y|\ln|x-y|}\d y\,,\\
        &\leq c'\int_{\R^N}\e^{-(1/\rho)|y|\ln|y|+c'(T)|y|+\alpha|x|\ln|x|+\alpha|y|\ln|y|+\alpha|x|+\alpha|y|}\d y\,,\\
        & \leq c'\e^{\alpha|x|\ln|x|+\alpha|x|}\int_{\R^N}\e^{(\alpha-1/\rho)|y|\ln|y|+(c'(T)+\alpha)|y|}\d y\,,\\
        & \leq c''\e^{\alpha|x|\ln|x|+\alpha|x|}\,.
    \end{aligned}
$$
Summing up, we get that for all $\beta\in (\alpha,1/\rho)$ and $x$
large enough,
$$
u^+(x,t)\le C \e^{\alpha|x|\ln|x|+\alpha|x|}\le  \e^{\beta |x|\ln|x|} \quad\mbox{and}\quad  u^-(x,t)\le C e^{\alpha|x|\ln|x|+\alpha|x|}\le  \e^{\beta |x|\ln|x|}
$$
Finally, we note that as $T$ is arbitrary,
$$
|u(x,t)|\leq  C  \e^{\beta |x|\ln|x|}
$$  for all $t\ge 0$.

\end{proof}

In the following, we denote by $f_\beta$ the function $f_\beta(x)=\e^{\beta|x|\ln|x|}$.

\begin{lemma}\label{lem:psi}
Let $J$ be a radially symmetric kernel, compactly supported in $B_\rho$ for some $\rho>0$. Let $u_0$ be a locally bounded function, $\alpha\in(0,1/\rho)$ and $c_0>0$. If  $|u_0(x)|\leq c_0\e^{\alpha|x|\ln|x|}$, then for any $T>0$,
$\beta\in(\alpha,1/\rho)$ and $A>0$ large enough the function
$$
\psi(x,t):=A(\omega(t+1)\ast f_\beta)(x)
$$
is a supersolution  of \eqref{eq:0}--\,\eqref{eq:initial.data} on $\R^N\times[0,T]$.
\end{lemma}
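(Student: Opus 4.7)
The plan is to verify the two defining conditions of a supersolution: the differential inequality $\psi_t \geq J\ast\psi - \psi$ and the initial ordering $\psi(\cdot,0) \geq u_0$. First I would check that $\omega(t+1) \ast f_\beta$ is well-defined and smooth on $\R^N \times [0,T]$. Using the upper estimate of Proposition~\ref{prop:est.omega.compact}, $\omega(x,t+1) \leq C_T\,\e^{-(1/\rho)|x|\ln|x|}\e^{C'_T|x|}$ uniformly in $t \in [0,T]$, together with the hypothesis $\beta < 1/\rho$, the integrand in the convolution decays like $\e^{(\beta - 1/\rho)|y|\ln|y|}$ at infinity, so the integral converges absolutely and one may differentiate under the integral sign.

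The differential inequality is essentially built in. By Lemma~\ref{lem:omega}, $\omega_t = J\ast\omega - \omega + \e^{-t} J$. Convolving in space with $f_\beta \geq 0$ (which commutes with $J\ast$ and with $\partial_t$), shifting $t \mapsto t+1$, and multiplying by $A > 0$ yields
\[
\psi_t = J\ast\psi - \psi + A\e^{-(t+1)}(J\ast f_\beta) \geq J\ast\psi - \psi,
\]
since the remainder is nonnegative.

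The main step is the initial condition. Since $|u_0| \leq c_0 f_\alpha$ and $\psi \geq 0$, it suffices to find $c > 0$ such that $(\omega(1)\ast f_\beta)(x) \geq c f_\alpha(x)$ pointwise in $\R^N$, after which $A := c_0/c$ works. Retaining only the first term in the series~\eqref{eq:def.omega} gives $\omega(\cdot,1) \geq \e^{-1} J$, reducing the task to a lower bound on $J\ast f_\beta$. For $|x| > \rho + 1$ and $y \in \supp J \subseteq B_\rho$ one has $|x-y| \geq |x| - \rho > 1$, so $f_\beta(x-y) \geq \e^{\beta(|x|-\rho)\ln(|x|-\rho)}$; combining with $\int J = 1$ gives $(J\ast f_\beta)(x) \geq \e^{\beta(|x|-\rho)\ln(|x|-\rho)}$. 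Expanding $(|x|-\rho)\ln(|x|-\rho) = |x|\ln|x| - \rho\ln|x| + O(1)$ for large $|x|$, the ratio $(J\ast f_\beta)(x)/f_\alpha(x)$ grows like $\e^{(\beta-\alpha)|x|\ln|x|}/|x|^{\beta\rho}$, which tends to $+\infty$ since $\beta > \alpha$. On the bounded set $\{|x| \leq \rho + 1\}$, both $J\ast f_\beta$ and $f_\alpha$ are continuous and strictly positive, so their ratio has a positive lower bound there as well.

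The only mild obstacle is stitching the two regimes into a single uniform constant $c$, but this is immediate from compactness. The key structural point is that the interval $\beta \in (\alpha, 1/\rho)$ allows both steps simultaneously: $\beta < 1/\rho$ is what makes the convolution with $\omega$ converge, while $\beta > \alpha$ produces the exponential gap needed to absorb the polynomial loss $|x|^{\beta\rho}$ arising from the compact support of $J$.
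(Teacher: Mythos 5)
Your proof is correct, and the first two steps (absolute convergence of the convolution via the upper bound of Proposition~\ref{prop:est.omega.compact}, and the differential inequality obtained by convolving \eqref{eq:omega} with $f_\beta$ and discarding the nonnegative remainder $A\e^{-(t+1)}J\ast f_\beta$) coincide with the paper's argument. Where you genuinely diverge is the lower bound at $t=0$. The paper invokes the full lower estimate $\omega(y,t+1)\geq c\,\e^{-(1/\sigma)|y|\ln|y|+c(T)|y|}$ from Proposition~\ref{prop:est.omega.compact} (hence, indirectly, the induction of Lemma~\ref{lem:J*.lower}), restricts the convolution integral to $\{|x-y|>|x|\}$, and shows by an orthant argument ($\{x_iy_i<0\ \forall i\}\subset\{|x-y|>|x|\}$ plus radial symmetry of the integrand) that the resulting factor is bounded below uniformly in $x$; this yields the stronger bound \eqref{eq:lower.behavior.psi}, namely $\omega(t+1)\ast f_\beta\geq c_1 f_\beta$ for all $t\in[0,T]$, which is then reused in the proof of Theorem~\ref{thm:sop.compact.minimal}. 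You instead truncate the series to its first term, $\omega(\cdot,1)\geq \e^{-1}J$, and bound $J\ast f_\beta$ directly using the compact support of $J$ and the monotonicity of $s\mapsto s\ln s$; this is more elementary and bypasses Proposition~\ref{prop:est.omega.compact} entirely for this step, at the cost of a polynomial loss: you get $\omega(1)\ast f_\beta\gtrsim |x|^{-\beta\rho}f_\beta(x)$ rather than $\gtrsim f_\beta(x)$. The gap $\beta>\alpha$ absorbs this loss, so your bound fully suffices for the lemma as stated (and would in fact still suffice for the comparison in Theorem~\ref{thm:sop.compact.minimal}, since $|x|^{-\beta\rho}f_\beta$ still dominates $C(t)f_\alpha$), but it does not literally reproduce \eqref{eq:lower.behavior.psi}. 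The one cosmetic point worth making explicit is the positivity and continuity of $x\mapsto (J\ast f_\beta)(x)$ on the compact set $\{|x|\leq\rho+1\}$, which you correctly note follows from $J$ being a continuous probability density and $f_\beta>0$.
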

\begin{proof}
    We assume here that $\rho=1$ in order to simplify the proof a little bit,
    the modifications being straightforward in the other cases.
    Since $\omega$ satisfies \eqref{eq:omega}, the function $\psi$ verifies
    $$\psi_t=J\ast \psi -\psi + A\e^{-(t+1)}J\ast f_\beta\geq J\ast \psi -\psi\,,$$
    so that we have a supersolution of the equation, and we need now to compare the initial data.

    We start from the fact that for $t\in(0,T)$, and any $\sigma>1$, we have the estimate
	$$\omega(x,t+1)\geq c\e^{-(1/\sigma)|x|\ln|x|+c(T)|x|}\,,$$
    for some $c>0$ and $c(T)\in\R$. Hence,
    $$\begin{aligned}
    (\omega(t+1)\ast f_\beta)(x)&\geq c\int_{\R^N}\e^{-(1/\sigma)|y|\ln|y|+c(T)|y|+\beta|x-y|\ln|x-y|}\d y\,,\\
        &\geq c\int_{\{ |x-y|>|x| \}}\e^{-(1/\sigma)|y|\ln|y|+c(T)|y|+\beta|x|\ln|x|}\d y\,,\\[2mm]
        & \geq c(x)\e^{\beta|x|\ln|x|}\,,
    \end{aligned}$$
    where the function $c(x)$ is given by
    $$
     c(x)=c\int_{\{|x-y|>|x|\}}\e^{-(1/\sigma)|y|\ln|y|+c(T)|y|}\d y\,.
    $$
	We claim that the constant $c(x)$ is uniformly bounded from below. Indeed,
	since $$|x-y|^2=|x|^2+|y|^2-2\sum x_iy_i\,,$$ the set $\{|x-y|>|x|\}$ contains at least
	$\{y_i x_i<0 \mbox{ for } i=1,\cdots,N\}$. Moreover, since the integrand is radial and integrable, we have
	$$
	c(x)\geq c\int_{\{\forall i,\ x_iy_i<0\}}\e^{-|y|\ln|y|+c(T)|y|}\d y=c\int_{\{\forall i,\ y_i>0\}}\e^{-|y|\ln|y|+c(T)|y|}\d y=c_1>0\,,
	$$
	and thus we get that
    \begin{equation}\label{eq:lower.behavior.psi}
        (\omega(x,t+1)\ast f_\beta)(x)\geq c_1\e^{\beta|x|\ln|x|}\,.
    \end{equation}
    This implies that if $A>c_0/c_1$\,, then $\psi(0)=(\omega(1)\ast f_\beta)\geq u_0$.
\end{proof}

\begin{theorem}
\label{thm:sop.compact.minimal}
    Let $J$ be a radially symmetric kernel, compactly supported in $B_\rho$ for some $\rho>0$. Then for any
	$0<\alpha<1/\rho$, there exists a unique solution of \eqref{eq:0}--\,\eqref{eq:initial.data}
	such that $|u(x,t)|\leq C(t)\e^{\alpha|x|\ln|x|}$, where $C(\cdot)$ is locally bounded in $[0,\infty)$.
\end{theorem}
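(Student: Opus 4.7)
The plan is to read off existence from part $(i)$ of the preceding theorem together with the preceding proposition, and to derive uniqueness from the comparison principle (Proposition~\ref{prop:comparison}) by constructing a suitable supersolution via Lemma~\ref{lem:psi}. The constructed solution $u(x,t)=\e^{-t}u_0(x)+(\omega(t)\ast u_0)(x)$ already belongs to the claimed class by the previous proposition (up to the infinitesimal enlargement of $\alpha$ that this proposition allows), so the real work is showing that no other solution of the same initial value problem can live in this class.

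For the uniqueness step, I would fix $T>0$ and let $u_1,u_2$ be two strong solutions with the same initial data, each satisfying $|u_i(x,t)|\leq C(t)\,\e^{\alpha|x|\ln|x|}$ on $\R^N\times[0,T]$. Pick $\beta\in(\alpha,1/\rho)$, set $f_\beta(x):=\e^{\beta|x|\ln|x|}$, and for $A>0$ to be fixed later define
$$\psi(x,t):=A\,(\omega(t+1)\ast f_\beta)(x).$$
By Lemma~\ref{lem:psi}, $\psi$ is a nonnegative continuous supersolution of \eqref{eq:0}, and the lower estimate \eqref{eq:lower.behavior.psi} established inside its proof yields
$$\psi(x,t)\geq c_1 A\,\e^{\beta|x|\ln|x|}\qquad\text{for every }(x,t)\in\R^N\times[0,T],$$
so that $\psi(\cdot,t)\to+\infty$ as $|x|\to\infty$ uniformly in $t\in[0,T]$. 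Writing $C_T:=\sup_{[0,T]} C(t)$ and using $\beta>\alpha$, one obtains
$$|u_1(x,t)-u_2(x,t)|\leq 2C_T\,\e^{\alpha|x|\ln|x|}=\frac{2C_T}{c_1 A}\,\e^{-(\beta-\alpha)|x|\ln|x|}\cdot\bigl(c_1 A\,\e^{\beta|x|\ln|x|}\bigr),$$
which shows that $u_1-u_2\ll\psi$ and $u_2-u_1\ll\psi$ locally uniformly on $[0,T]$. Proposition~\ref{prop:comparison} applied on $[0,T]$ then forces $u_1\equiv u_2$ there, and letting $T\to\infty$ delivers the global uniqueness.

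The main subtlety, which is the step requiring the most care, is that Lemma~\ref{lem:psi} furnishes $\psi$ only as a supersolution on finite horizons (the constant $c_1$ in \eqref{eq:lower.behavior.psi} is obtained via the $c(T)|x|$-term coming from the lower bound on $\omega$), so $\psi$ does not strictly satisfy \eqref{eq:def.psi} uniformly on $[0,\infty)$. This is circumvented by noticing that the proof of Proposition~\ref{prop:comparison} only uses the supersolution property and the spatial coercivity of $\psi$ on the time-slab where one is comparing, so that running it on each $[0,T]$ and letting $T\to\infty$ still yields the desired global conclusion; the gap $\beta-\alpha>0$ built into the construction is what guarantees that the $\ll$ condition survives as $T$ grows, since the super-exponential factor $\e^{-(\beta-\alpha)|x|\ln|x|}$ beats any finite-in-time growth of $C_T$.
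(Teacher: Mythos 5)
Your argument is correct and follows essentially the same route as the paper: existence is inherited from the construction in the preceding results, and uniqueness comes from comparing against the supersolution $\psi=A\,(\omega(t+1)\ast f_\beta)$ with $\beta\in(\alpha,1/\rho)$, whose lower bound \eqref{eq:lower.behavior.psi} makes it dominate ($\ll$) anything growing like $\e^{\alpha|x|\ln|x|}$; the paper invokes Theorem~\ref{thm:uniqueness}, which is just the corollary of Proposition~\ref{prop:comparison} that you apply directly. Your explicit treatment of the finite-horizon issue (running the comparison on each slab $[0,T]$ and letting $T\to\infty$) is a point the paper leaves implicit, and is handled correctly.
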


\begin{proof}
	Notice first that by assumption, the initial data satifies $|u_0|\leq C(0)\e^{\alpha|x|\ln|x|}$
	so that we know how to construct a solution satisfying the assumptions of the theorem.
    Now, let us fix $\beta\in(\alpha,1/\rho)$ and consider the supersolution
    $$\psi(x,t):=A(\omega(t+1)\ast f_\beta)(x)\,,$$
    which grows at least as $\e^{\beta|x|\ln|x|}$ on $\R^N\times[0,T]$ -- see \eqref{eq:lower.behavior.psi}.
    Since this supersolution grows strictly faster than any solution satisfying $|u(x,t)|\leq C(t)\e^{\alpha|x|\ln|x|}$,
	we can apply Theorem \ref{thm:uniqueness} to get uniqueness.
\end{proof}

\subsection{Gaussian kernels}
\label{subsec:gauss}

We assume here that the $J$ is a centered gaussian with variance $\sigma^2=1/(2\gamma^2)$
(we write it under this form in order to simplify the statements), that is:
\begin{equation}\label{eq:def.gaussian}
    J(y)=c(\gamma)\e^{-\gamma^2|y|^2}\,,\ c(\gamma)=\Big(\int_{\R^N} \e^{-\gamma^2|y|^2}\d y\Big)^{-1}=
    \Big(\frac{\gamma^2}{\pi}\Big)^{N/2}\,.
\end{equation}
We use the same method as in the case of compactly supported kernels
to estimate the nonlocal heat kernel associated to the equation. Let us first give a slightly better estimate
than that of Lemma \ref{lem:J*.lower}:

\begin{lemma}\label{lem:J*.lower.gauss}
    There exist $c>0$ and $\mu\in(0,1)$ depending only on $J$ such that
    \begin{equation*}
        \forall n\geq1\,,\ \forall\,x\in B_{n}\,,\quad J^{\ast n}(x)\geq c\mu^n\,.
    \end{equation*}
\end{lemma}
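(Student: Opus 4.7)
The plan is to exploit the stability of Gaussians under convolution, which yields an explicit formula for $J^{\ast n}$ from which the lower bound drops out immediately. Concretely, I would first establish the closed form
$$
J^{\ast n}(x)=\Big(\frac{\gamma^2}{n\pi}\Big)^{N/2}\e^{-\gamma^2|x|^2/n},
$$
which can be obtained either from the Fourier transform $\hat J(\xi)=\e^{-|\xi|^2/(4\gamma^2)}$ (its $n$-th power $\e^{-n|\xi|^2/(4\gamma^2)}$ being the Fourier transform of a centered isotropic Gaussian of variance $n/(2\gamma^2)$ per coordinate), or by a one-line induction on $n$ using the fact that the convolution of two centered Gaussians is again a centered Gaussian whose variances add.

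With this closed form in hand, the constraint $x\in B_n$, i.e.\ $|x|\le n$, gives $\gamma^2|x|^2/n\le \gamma^2 n$, hence
$$
J^{\ast n}(x)\ge \Big(\frac{\gamma^2}{n\pi}\Big)^{N/2}\e^{-\gamma^2 n}.
$$
It only remains to absorb the polynomial prefactor $n^{-N/2}$ into a geometric sequence. To this end I would fix any $\mu\in(0,\e^{-\gamma^2})\subset(0,1)$, so that $\mu\e^{\gamma^2}<1$. Then
$$
\frac{n^{-N/2}\e^{-\gamma^2 n}}{\mu^n}=n^{-N/2}\big(\mu\e^{\gamma^2}\big)^{-n}\longrightarrow+\infty\quad\text{as }n\to\infty,
$$
so this quantity is bounded below by some positive constant uniformly in $n\ge 1$; multiplying by $(\gamma^2/\pi)^{N/2}$ then produces the desired $c>0$ with $J^{\ast n}(x)\ge c\mu^n$ for all $n\ge 1$ and all $x\in B_n$.

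No serious obstacle arises: the Gaussian stability under convolution trivialises the delicate inductive combinatorial estimate that was the heart of Lemma~\ref{lem:J*.lower}. The only mildly subtle point is the choice of $\mu$ strictly smaller than $\e^{-\gamma^2}$, which is precisely what ensures that the polynomial factor $n^{-N/2}$ is negligible in comparison with the geometric improvement $(\mu\e^{\gamma^2})^{-n}$ and, as a pleasant by-product, explains why the exponent $1/\sigma$ that appeared in the compactly supported case may be replaced here by the sharp value $1$.
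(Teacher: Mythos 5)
Your proof is correct, but it takes a genuinely different route from the paper. The paper proves this lemma by repeating the inductive scheme of Lemma~\ref{lem:J*.lower}: it sets $\mu:=\int_{\{3/2<y_N<2\}}J(y)\,\mathrm{d}y$ and shows that for $x\in B_{n+1}$ one can find a slab of $y$'s of mass $\mu$ with $x-y\in B_n$, propagating the bound $J^{\ast n}\geq c\mu^n$ from $n$ to $n+1$ (the unbounded support even makes the geometric step easier than in the compactly supported case). You instead bypass the induction entirely by invoking the closed form $J^{\ast n}(x)=(\gamma^2/(n\pi))^{N/2}\e^{-\gamma^2|x|^2/n}$, restricting to $|x|\leq n$ to get the bound $(\gamma^2/\pi)^{N/2}n^{-N/2}\e^{-\gamma^2 n}$, and absorbing the polynomial prefactor by taking any $\mu\in(0,\e^{-\gamma^2})$. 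All steps check out, including the normalization of the $n$-fold convolution (your formula is the correct one; the paper's statement of it in the proof of Proposition~\ref{prop:est.omega.gaussian} carries a spurious factor $c(\gamma)^n$) and the uniform positive lower bound on $n^{-N/2}(\e^{-\gamma^2}/\mu)^n$. Your argument is shorter and yields explicit constants, and it is consistent with the paper's own use of the explicit convolution formula for the upper bound in Proposition~\ref{prop:est.omega.gaussian}; what the paper's inductive argument buys in exchange is robustness, since it does not rely on the Gaussian's stability under convolution and so adapts to perturbed or merely Gaussian-like kernels. Your closing remark about the exponent $1$ versus $1/\sigma$ should be taken with a grain of salt — the radius $n$ in $B_n$ here is a convenient normalization rather than a sharp threshold (one could get the bound on $B_{Cn}$ for any $C$ at the price of a smaller $\mu$) — but this is a side comment and does not affect the proof.
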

\begin{proof}
	The method is exactly the same as for Lemma \ref{lem:J*.lower}, except that since $J$ is not compactly supported,
	it is even easier. In the present case we set
    $$\mu:=\int_{\{3/2<y_N<2\}}J(y)\d y>0\,,$$
	and it is enough to check that if $|x|<n$, and $3/2<y_N<2$, then
	$|x-y|^2\leq (n-1/2)^2+2\,.$
	Hence for $n$ big enough, we have for such $x,y$, that $|x-y|\leq n$ and we can proceed by induction as
	in Lemma \ref{lem:J*.lower}.
\end{proof}

\begin{proposition}\label{prop:est.omega.gaussian}
    Let $J$ be defined by \eqref{eq:def.gaussian} for some $\gamma>0$. Then there exist some positive
    constants $c_1,c_2,c_3,c_4>0$ depending only on $\gamma$ and $N$ such that for $|x|>1$,
    \begin{equation*}
    c_1\e^{-t}\e^{-\gamma|x|(\ln |x|)^{1/2}}\e^{(c_2+\ln t)|x|}\leq
    \omega(x,t)\leq c_3\e^{-\gamma|x|(\ln |x|)^{1/2}}\e^{(c_4+\ln t)|x|}\,.
    \end{equation*}
\end{proposition}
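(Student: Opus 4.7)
The strategy mirrors Proposition~\ref{prop:est.omega.compact}, but now exploits the explicit formula for convolution powers of a Gaussian. The crucial observation is that because $J$ is a centered Gaussian with variance $1/(2\gamma^2)$, its $n$-fold convolution power is again Gaussian, namely
\[
J^{\ast n}(x) = \Big(\frac{\gamma^2}{\pi n}\Big)^{N/2}\e^{-\gamma^2|x|^2/n}.
\]
Substituting into the series~\eqref{eq:def.omega} gives
\[
\omega(x,t)=\e^{-t}\sum_{n=1}^{\infty}a_n(x,t),\qquad a_n(x,t):=\frac{t^n}{n!}\Big(\frac{\gamma^2}{\pi n}\Big)^{N/2}\e^{-\gamma^2|x|^2/n}.
\]

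I would then perform a Laplace-type analysis of the series. Using Stirling, the logarithm of $a_n$ is, up to polynomial terms in $n$,
\[
\log a_n(x,t)\approx n(1+\ln t-\ln n)-\frac{\gamma^2|x|^2}{n}.
\]
Optimizing in (continuous) $n$ yields the critical equation $n^2\ln(n/t)=\gamma^2|x|^2$, with solution $n_*\sim \gamma|x|/(\ln|x|)^{1/2}$ for $|x|$ large. Substituting $n_*$ back in produces an exponent of the form $-c\,|x|(\ln|x|)^{1/2}$ together with a lower-order $(c_i+\ln t)|x|$ correction coming from the logarithmic terms in $\ln(n_*/t)$, which is exactly the shape claimed in the statement.

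For the upper bound I would bound the sum by $\e^{-t}\cdot W\cdot a_{n_*}$, where $W$ is the effective width of the peak. A short second-derivative computation shows $(\log a_n)''(n_*)$ is of order $-\ln|x|/n_*$, so the peak width is polynomial in $|x|$ and is harmlessly absorbed into the $\e^{c_4|x|}$ factor. For the lower bound I would simply retain the single term $\omega(x,t)\ge \e^{-t}a_{[n_*]}(x,t)$ and apply Stirling directly, which yields the matching exponential rate.

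The main obstacle, and the real departure from the compactly supported case, is that Lemma~\ref{lem:J*.lower.gauss} does not suffice to produce a sharp lower bound here: any choice $n\ge|x|$ forces $(t\mu)^n/n!$ to be at best of order $\e^{-|x|\ln|x|+O(|x|)}$, which is too small by a factor $(\ln|x|)^{1/2}$ in the exponent compared to the claim. One therefore has to exploit the explicit Gaussian expression and take the much smaller $n_*\sim \gamma|x|/(\ln|x|)^{1/2}$, for which the lemma does not apply. The remaining care is bookkeeping: tracking how the $\ln(n_*/t)$-factors split into a leading $(\ln|x|)^{1/2}$ contribution and the subleading $|x|$-term producing the $(\ln t)|x|$-correction stated in the proposition.
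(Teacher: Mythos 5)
Your upper bound is obtained by a different (sharper) route than the paper's, and that part is fine. The paper splits the series at a threshold $K$, bounds $\sum_{n\le K}$ crudely by $\mathrm{e}^{-\gamma^2|x|^2/K}\mathrm{e}^t$ and $\sum_{n>K}$ by the tail $\mathrm{e}^t t^K/K!$, then chooses $K$ so that both contributions balance; this produces the (valid but loose) exponent $-\gamma|x|(\ln|x|)^{1/2}$. Your Laplace-type analysis, if carried out, would instead locate the true peak term and give a strictly smaller exponent (see below) — which still implies the stated upper bound.

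Your remark about Lemma~\ref{lem:J*.lower.gauss} is exactly on target, and it points at a real gap. The lemma requires $x\in B_n$, i.e.\ $n\ge|x|$, yet the threshold $K(x)$ satisfying $K^2\ln K\sim\gamma^2|x|^2$ is $K\sim\gamma|x|/(\ln|x|)^{1/2}\ll|x|$ for $|x|$ large. The paper's lower-bound argument nevertheless applies Lemma~\ref{lem:J*.lower.gauss} with $n=[K]$, so the hypothesis $x\in B_{[K]}$ fails and the inequality $J^{\ast [K]}(x)\ge c\mu^{[K]}$ cannot be invoked. (In fact $J^{\ast[K]}(x)=(\gamma^2/\pi K)^{N/2}\mathrm{e}^{-\gamma^2|x|^2/K}\approx\mathrm{e}^{-\gamma|x|(\ln|x|)^{1/2}}$ at $n=[K]$, which is exponentially small, not $O(\mu^{[K]})$.) So you have put your finger on an actual flaw in the published lower-bound argument, not merely on an alternative approach the paper sidesteps.

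However, your closing sentence ("the remaining care is bookkeeping") hides a discrepancy, and I would not sign off on your lower bound as written. At the maximizer $n_*$ (where $n_*^2\ln(n_*/t)=\gamma^2|x|^2$), the two competing factors $t^{n_*}/n_*!$ and $\mathrm{e}^{-\gamma^2|x|^2/n_*}$ are each of size $\mathrm{e}^{-\gamma|x|(\ln|x|)^{1/2}+O(|x|/(\ln|x|)^{1/2})}$: indeed, by Stirling $\log(t^{n_*}/n_*!)=-\gamma^2|x|^2/n_*+n_*+O(\ln n_*)$, and $\gamma^2|x|^2/n_*\approx n_*\ln n_*\approx\gamma|x|(\ln|x|)^{1/2}$. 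Therefore the single retained term $\mathrm{e}^{-t}a_{[n_*]}$ carries exponent $-2\gamma|x|(\ln|x|)^{1/2}+O(|x|/(\ln|x|)^{1/2})$, not $-\gamma|x|(\ln|x|)^{1/2}$. This is a perfectly good, and in fact asymptotically sharp, lower bound — it matches the tighter upper bound that your Laplace method yields — but it is genuinely weaker than the inequality asserted in the proposition, whose stated lower bound is of order $\mathrm{e}^{-\gamma|x|(\ln|x|)^{1/2}}$ and cannot be reached this way. The correct constant in front of $|x|(\ln|x|)^{1/2}$ in a two-sided estimate of this type is $2\gamma$, and you should either derive that and record it explicitly, or note the discrepancy with the proposition, rather than wave it through as bookkeeping.
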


\begin{proof}
    Fixing first $|x|>1$, we begin by using the explicit formula for convolution of gaussian laws:
    $$J^{\ast n}(y)=c(\gamma)^n\Big(\frac{\gamma^2}{n\pi}\Big)^{N/2}\e^{-\gamma^2|y|^2/n}\,,$$
    where $c(\gamma)$ is defined in \eqref{eq:def.gaussian}.
    Then it follows that we can estimate from above
    $$\omega(x,t)=\e^{-t}\sum_{n=1}^\infty\frac{J^{\ast n}(x)t^n}{n!}\leq \Big(\frac{\gamma^2}{\pi}\Big)^{N/2}\e^{-t}
    \Big\{\e^{-\gamma^2|x|^2/K}\sum_{n=1}^K \frac{c(\gamma)^n t^n}{n!}
    + \sum_{n>K}\frac{c(\gamma)^n t^n}{n!}\Big\}\,,$$
    where $K>1$ is an integer to be fixed below, depending on $x$.
    Using \eqref{eq:f(x)} for the second term, we get that for some constant $C(\gamma,N)>0$ we have:
    $$\omega(x,t)\leq C(\gamma,N)\, (c(\gamma)t)^K\Big(\e^{-\gamma^2|x|^2/K}  + \frac{1}{K!}\Big)\,.$$

    Now, in order to optimize this estimate as $|x|\to\infty$, we choose $K$ such that both terms are comparable,
    which means as $|x|\to\infty$, using Stirling's formula:
    \begin{equation}\label{eq:gauss.K}
		\gamma^2|x|^2\sim K\ln (K!)\sim K^2\ln K\,.
	\end{equation}
	Taking logs, we get $\ln |x|\sim\ln K$ and then taking the square root in \eqref{eq:gauss.K} we obtain
    $$K\ln K\sim \gamma|x|(\ln|x|)^{1/2}\,.$$
    With this choice, the estimate becomes:
    $$
	\omega(x,t)\leq 2C(\gamma,N)\,(c(\gamma)t)^K\e^K\frac{1}{K!}\leq 2C(\gamma,N)\,(c(\gamma)t)^K
	\e^{- K\ln K + O(K)}\,,
	$$
    and we argue as in Lemma \ref{lem:omega.upper}. Notice that the coefficients $c_2$ and $c_4$
    include the constant $c(\gamma)$ since $\ln (c(\gamma)t)=\ln c(\gamma)+\ln t$.

	To get the lower bound, we first notice that for any fixed $n$,
	$$\omega(x,t)\geq \e^{-t}\frac{t^nJ^{\ast n}(x)}{n!}\,.$$
	Then we choose $n=[K(x)]$, the entire part of $K$ defined by \eqref{eq:gauss.K},
	we use Lemma \ref{lem:J*.lower.gauss}:
	$$\omega(x,t)\geq c\e^{-t}\frac{(\mu t)^{[K]}}{[K]!}\,,$$
	and we end up as in Proposition \ref{prop:est.omega.compact}.
\end{proof}

As a direct consequence we have:

\begin{theorem}\label{thm:gaussian.ex}
    Let $J$ be defined in \eqref{eq:def.gaussian} for some $\gamma>0$, let $u_0$ be
    a locally bounded initial data and $c_0>0$ arbitrary. Then the following holds {\rm (}we consider only $|x|>1${\rm):}\\[6pt]
    $(i)$ If $|u_0(x)|\leq c_0\e^{\alpha|x|(\ln|x|)^{1/2}}$ for some $\alpha<\gamma$, then there exists a global solution of \eqref{eq:0}.\\[6pt]
    $(ii)$ If $u_0(x)\geq c_0\e^{\beta|x|(\ln|x|)^{1/2}}$ for some $\beta>\gamma$, then there does not exist any nonnegative solution of~\eqref{eq:0}.\\[6pt]
    $(iii)$ If $u_0(x)=c_0\e^{\gamma|x|(\ln|x|)^{1/2}+f(|x|)}$ where for some $\alpha<0<\beta$, $\alpha s\leq f(s)\leq \beta s$,
    then the minimal solution of \eqref{eq:0} blows up in finite time.
\end{theorem}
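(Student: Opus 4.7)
All three parts are driven by the two-sided estimate on the regular part of the heat kernel $\omega$ from Proposition \ref{prop:est.omega.gaussian}. Writing $g(s):=s(\ln s)^{1/2}$ for brevity, the bounds read, for $|y|>1$,
$$c_1 \e^{-t}\e^{-\gamma g(|y|)+(c_2+\ln t)|y|}\;\leq\;\omega(y,t)\;\leq\; c_3 \e^{-\gamma g(|y|)+(c_4+\ln t)|y|}.$$
Throughout I would use the representation $u(x,t)=\e^{-t}u_0(x)+(\omega(t)\ast u_0)(x)$ and the fact, established inside the proof of Corollary \ref{cor:est.trace}, that any nonnegative solution forces $\omega(t)\ast u_0<\infty$ pointwise.

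For \emph{part (i)}, after splitting $u_0=(u_0)_+-(u_0)_-$ I would show that $\omega(t)\ast(u_0)_\pm(x)$ is finite for every $(x,t)$. Plugging the upper bound produces an integrand controlled by $\e^{-\gamma g(|y|)+(c_4+\ln t)|y|+\alpha g(|x-y|)}$, and the triangle-type inequality $g(|x-y|)\leq g(|y|)+O(|x|(\ln|y|)^{1/2})$ reduces the exponent to $(\alpha-\gamma)g(|y|)+O(|y|)$ as $|y|\to\infty$. Since $\alpha-\gamma<0$ and $g$ dominates any linear term, the integral converges, and Theorem \ref{thm:ex} then yields a global solution.

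For \emph{part (ii)}, I would evaluate the convolution at $x=0$. Using $\omega(-y,t)=\omega(y,t)$ together with the lower bound,
$$\omega(t)\ast u_0(0)\;\geq\; c_0c_1\e^{-t}\int_{|y|>1}\e^{(\beta-\gamma)g(|y|)+(c_2+\ln t)|y|}\,\d y.$$
Because $\beta>\gamma$ the integrand is unbounded as $|y|\to\infty$, so the integral is $+\infty$ for every $t>0$, which contradicts the pointwise finiteness obtained in Corollary \ref{cor:est.trace}; hence no nonnegative solution exists.

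\emph{Part (iii)} is the delicate step and the main obstacle. At $x=0$ the critical factor $\e^{-\gamma g(|y|)}$ from the lower bound of $\omega$ cancels exactly against $\e^{\gamma g(|y|)}$ from $u_0$, leaving only subleading linear terms:
$$\omega(t)\ast u_0(0)\;\geq\; c_0c_1\e^{-t}\int_{|y|>1}\e^{(c_2+\ln t)|y|+f(|y|)}\,\d y\;\geq\; c_0c_1\e^{-t}\int_{|y|>1}\e^{(c_2+\alpha+\ln t)|y|}\,\d y,$$
where I used $f(|y|)\geq\alpha|y|$. This integral is $+\infty$ as soon as $c_2+\alpha+\ln t>0$, i.e.~for $t>T^*:=\e^{-c_2-\alpha}$, which is finite. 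Consequently the minimal solution, obtained as the monotone limit of the solutions with truncated data, must be $+\infty$ at the origin by the finite time $T^*$: this is the finite-time blow-up. The role of the one-sided linear bound $f(|y|)\leq\beta|y|$ is the symmetric one, via the upper estimate on $\omega$: it ensures that the convolution is finite at the origin on some initial time interval, so that the blow-up is genuine and the statement non-vacuous. The whole argument hinges on balancing the linear corrections $(c_2+\ln t)|y|$, $(c_4+\ln t)|y|$ and $f(|y|)$ against each other, which is precisely why the hypothesis on $f$ has to be two-sided linear and why the explicit constants $c_2,c_4$ of Proposition \ref{prop:est.omega.gaussian} enter the argument.
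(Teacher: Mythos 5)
Your proof takes essentially the same route as the paper's: parts (i) and (ii) the paper dispatches by analogy with the compactly supported case, and your fill-in matches that template (finiteness of $\omega(t)\ast u_0$ via the upper kernel bound, contradiction with Corollary~\ref{cor:est.trace} via the lower bound); and part (iii) is exactly the paper's argument, balancing the exposed linear terms $f(|y|)$ against $(c_2+\ln t)|y|$ and $(c_4+\ln t)|y|$ to get a finite window where the convolution converges and a later time where it diverges. One small imprecision: in (i), Theorem~\ref{thm:ex} cannot be applied directly once you only know $\omega(t)\ast u_0<\infty$, since that theorem requires a supersolution $\psi$ satisfying~\eqref{eq:def.psi}; what the paper does (in the compactly supported template you are following) is rerun the \emph{argument} of Theorem~\ref{thm:ex}, using $\phi(x,t)=\e^{-t}u_0(x)+(\omega(t)\ast u_0)(x)$ as the dominating function for the truncated sequence. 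This is a cosmetic slip, not a gap.
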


\begin{proof}
	Points $(i)$ and $(ii)$ are done exactly as in the case of compactly supported kernels. It remains to show $(iii)$.
	Since $\alpha s\leq f(s)\leq \beta s$, then we know that at least, for $t>0$ small enough, the convolution $\omega(t)\ast u_0$ converges.
	More precisely, it is enough to choose $\beta + c_2 + \ln t <0$, hence $t<\e^{-\beta-c_4}$.
    On the other hand, if $t$ is big enough, using the estimate from below of $\omega(t)$, we know
    that the convolution blows up, at least for $t>\e^{-\alpha-c_2}$.
    Hence the solution is defined for a short time interval, but it eventually blows up in finite time.
\end{proof}

Concerning changing sign solutions and uniqueness, the results for the compactly supported case have a direct translation in the present situation.
The adaptations being straightforward, which essentially consist in changing $\ln(|x|)$ to $\ln(|x|)^{1/2}$ for $|x|>1$, we skip the details.

In order to avoid the problem with the log if $|x|<1$, we consider the function
\begin{equation*}
    g_\alpha(x):=\e^{\alpha|x|(\ln(|x|+1))^{1/2}}\,.
\end{equation*}

Everything follows from the estimates in Lemma \ref{lem:psi} which are valid in the following form here:
\begin{lemma}
    Let $\alpha\in(0,\gamma)$ and let $u_0$ be a nonnegative continuous
    function such that $$u_0(x)\leq c_0\e^{\alpha|x|(\ln(1+|x|))^{1/2}}\,.$$
    Then for any $T>0$, if $A>0$ is big enough the function $\psi(x,t):=A(\omega(t+1)\ast g_\alpha)(x)$ is a supersolution
    of \eqref{eq:0}--\,\eqref{eq:initial.data} on $\R^N\times[0,T]$.
    Moreover, for $t\in[0,T]$, there exists a constant $A'>0$ such that
    $$\psi(x,t)\leq A'\e^{\alpha|x|(\ln(1+|x|))^{1/2}+\alpha|x|}\,.$$
\end{lemma}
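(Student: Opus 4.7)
My plan is to adapt the proof of Lemma~\ref{lem:psi} from the compactly-supported case, using Proposition~\ref{prop:est.omega.gaussian} in place of Proposition~\ref{prop:est.omega.compact}. Writing $h(s):=s\sqrt{\ln(1+s)}$ so that $g_\alpha(x)=e^{\alpha h(|x|)}$, every occurrence of $|x|\ln|x|$ in the compact case is essentially replaced by $h(|x|)$; the overall structure of the argument carries over.

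The supersolution property is immediate from~\eqref{eq:omega}: convolving spatially with $Ag_\alpha\geq 0$ and shifting time by $1$ yields $\psi_t=J\ast\psi-\psi+Ae^{-(t+1)}(J\ast g_\alpha)\geq J\ast\psi-\psi$, since the extra term is nonnegative. For the initial comparison $\psi(x,0)\geq u_0(x)$, I would use the lower estimate on $\omega(\cdot,1)$ from Proposition~\ref{prop:est.omega.gaussian} together with the radial monotonicity of $g_\alpha$: restricting the integral to the orthant $\{y:x_iy_i\leq 0\ \forall i\}\subseteq\{|x-y|\geq|x|\}$ (on which $g_\alpha(x-y)\geq g_\alpha(x)$) and using radial symmetry of $\omega$ to pass to a fixed orthant, one factors $g_\alpha(x)$ out and bounds the remaining integral of $\omega(\cdot,1)$ from below by a strictly positive constant $c_1'$ independent of $x$; taking $A\geq c_0/c_1'$ then concludes this step.

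For the upper bound $\psi(x,t)\leq A'e^{\alpha h(|x|)+\alpha|x|}$, I would exploit the explicit Gaussian structure $J^{\ast n}(y)=(\gamma^2/(n\pi))^{N/2}e^{-\gamma^2|y|^2/n}$ in the expansion $\omega(t+1)=e^{-(t+1)}\sum_{n\geq 1}\frac{(t+1)^n}{n!}J^{\ast n}$ from Lemma~\ref{lem:omega}. A Laplace-method computation of the Gaussian integral $(J^{\ast n}\ast g_\alpha)(x)$ around its saddle point $y_\ast\sim -(n\alpha/(2\gamma^2))\, h'(|x|)\, x/|x|$ gives $(J^{\ast n}\ast g_\alpha)(x)\lesssim g_\alpha(x)\,|x|^{n(\alpha/(2\gamma))^2}$, valid in the regime where the saddle remains small relative to $|x|$ (the complementary range of $n$ is super-exponentially damped by $(t+1)^n/n!$). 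Summing the series then yields $(\omega(t+1)\ast g_\alpha)(x)\lesssim g_\alpha(x)\,e^{(t+1)|x|^{(\alpha/(2\gamma))^2}}$, and since $(\alpha/(2\gamma))^2<1$ the extra factor is at most $e^{\alpha|x|}$ for $|x|$ large (with $t$ bounded); continuity of $\psi$ handles bounded $|x|$ after enlarging $A'$.

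The main technical obstacle is the upper bound: a direct use of the pointwise upper estimate on $\omega$ from Proposition~\ref{prop:est.omega.gaussian} combined with triangle-inequality bounds on $h$ (such as $h(|x-y|)\leq 2h(|y|)+C|y|$, obtained via subadditivity of $\sqrt{\ln(1+\cdot)}$) produces an effective exponent of order $2\alpha h(|x|)$, which is strictly larger than the target $\alpha h(|x|)+\alpha|x|$ when $\alpha\geq\gamma/2$. One must therefore exploit the mass concentration of $\omega$ near the origin, through the Gaussian structure of each $J^{\ast n}$ and a careful Laplace-method calculation, in order to recover the correct effective coefficient $\alpha$ on $h(|x|)$.
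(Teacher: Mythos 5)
Your handling of the supersolution property and of the initial comparison (convolving \eqref{eq:omega} with $Ag_\alpha\geq 0$, then bounding $(\omega(1)\ast g_\alpha)(x)$ from below on the orthant $\{y: x_iy_i\leq 0\ \forall i\}\subset\{|x-y|\geq |x|\}$, where $g_\alpha(x-y)\geq g_\alpha(x)$ by radial monotonicity) coincides with the paper's proof, which simply transposes Lemma~\ref{lem:psi}. The divergence is in the upper bound, and your diagnosis there is partly correct: the termwise subadditivity $(\ln(1+|x|)+\ln(1+|y|))^{1/2}\leq(\ln(1+|x|))^{1/2}+(\ln(1+|y|))^{1/2}$ does generate the cross terms $|x|(\ln(1+|y|))^{1/2}$ and $|y|(\ln(1+|x|))^{1/2}$, and after integrating against $\omega(\cdot,t+1)$ (take $|y|\sim|x|^{\theta}$, $\theta$ close to $1$) these inflate the bound to essentially $\e^{2\alpha|x|(\ln(1+|x|))^{1/2}}$. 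Note, however, that this overshoots the target $\e^{\alpha h(|x|)+\alpha|x|}$, with $h(s)=s(\ln(1+s))^{1/2}$, for \emph{every} $\alpha\in(0,\gamma)$, not only for $\alpha\geq\gamma/2$: the stated conclusion requires the coefficient of $h(|x|)$ to be exactly $\alpha$.

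Where I disagree is with your conclusion that one must therefore resort to the explicit Gaussian form of $J^{\ast n}$ and a saddle-point analysis. The elementary inequality
\begin{equation*}
h(a+b)\leq h(a)+h(b)+a+b+C_0\,,
\end{equation*}
obtained from $\sqrt{A}-\sqrt{B}=(A-B)/(\sqrt A+\sqrt B)$ together with $\ln\tfrac{1+a+b}{1+a}\leq\tfrac{b}{1+a}$, contains no cross terms and is the exact analogue of $(a+b)\ln(a+b)\leq a\ln a+b\ln b+a+b$ used in the proposition preceding Lemma~\ref{lem:psi}. With it, $g_\alpha(x-y)\leq C\e^{\alpha h(|x|)+\alpha|x|}\,\e^{\alpha h(|y|)+\alpha|y|}$, and the remaining integral $\int\omega(y,t+1)\e^{\alpha h(|y|)+\alpha|y|}\d y$ converges, uniformly for $t\in[0,T]$, by the upper estimate of Proposition~\ref{prop:est.omega.gaussian} and $\alpha<\gamma$; this yields the stated bound in three lines and is what the paper's proof intends. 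Your Laplace-method alternative is plausible in principle --- the saddle does give $(J^{\ast n}\ast g_\alpha)(x)\approx g_\alpha(x)|x|^{n\alpha^2/(4\gamma^2)}$, and the series then sums to $g_\alpha(x)\exp\big((t+1)|x|^{\alpha^2/(4\gamma^2)}\big)$ --- but as written it is only a sketch: you still need constants uniform in $n$ in the announced regime, an actual estimate (not just the assertion of ``super-exponential damping'') for the range of $n$ where the saddle leaves the admissible region, and the whole device is tied to exactly Gaussian kernels, whereas the subadditivity argument is the one that transfers between the compactly supported and Gaussian cases.
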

\begin{proof}
    The proof follows exactly that of Lemma \ref{lem:psi}, the only modification we need
    is provided by the inequality $(a+b)^{1/2}\leq a^{1/2}+b^{1/2}$
    for the upper estimate, applied to $(\ln(1+|x|) + \ln(1+|y|))^{1/2}$.
\end{proof}

With this Lemma, Theorem \ref{thm:sop.compact.minimal} can be written here as follows, with obvious adaptations:
\begin{theorem}\label{thm:gaussian}
    Let $J$ be defined in \eqref{eq:def.gaussian} for some $\gamma>0$. Then for any
	$0<\alpha<\gamma$, there exists a unique solution of \eqref{eq:0}--\,\eqref{eq:initial.data}
	such that $|u(x,t)|\leq C(t)\e^{\alpha|x|\ln|x|}$, where $C(\cdot)$ is locally bounded in $[0,\infty)$.
\end{theorem}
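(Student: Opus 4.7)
The plan is to transpose the proof of Theorem \ref{thm:sop.compact.minimal} from the compactly-supported setting to the Gaussian setting, with Proposition \ref{prop:est.omega.gaussian} playing the role that Proposition \ref{prop:est.omega.compact} played there. The three ingredients needed are: (a) existence of a solution satisfying the required growth bound $|u(x,t)|\leq C(t)\e^{\alpha|x|\ln|x|}$; (b) an explicit continuous supersolution $\psi$ that strictly dominates every candidate in the uniqueness class; and (c) a direct appeal to Theorem \ref{thm:uniqueness}, the abstract uniqueness principle developed in Section \ref{sect:ex.un}.

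For ingredient (a), the hypothesis forces $|u_0(x)|\leq C(0)\e^{\alpha|x|\ln|x|}$, so Theorem \ref{thm:gaussian.ex}(i) applies and furnishes a global strong solution represented by $u(x,t)=\e^{-t}u_0(x)+(\omega(t)\ast u_0)(x)$. This solution is in fact the minimal one, obtained as the monotone limit of approximations with compactly supported truncations $u_0\chi_n$, exactly as in Theorem \ref{thm:ex}. For ingredient (b), I would fix an intermediate exponent $\alpha'\in(\alpha,\gamma)$ and apply the preceding Lemma with parameter $\alpha'$ to construct
$$\psi(x,t):=A(\omega(t+1)\ast g_{\alpha'})(x),$$
choosing $A$ large enough that $\psi(x,0)\geq C(0)\e^{\alpha|x|\ln|x|}$ on $\R^N$, so that in particular $\psi(\cdot,0)\geq |u_0|$. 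Because $\omega$ itself satisfies $\omega_t=J\ast\omega-\omega+\e^{-t}J\geq J\ast\omega-\omega$, the convolution $\psi$ inherits the supersolution property from Lemma \ref{lem:omega}.

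For ingredient (c), the essential point is that $\psi$ grows strictly faster than any element of the uniqueness class. From the lower estimate of the Gaussian heat kernel in Proposition \ref{prop:est.omega.gaussian}, restricting the convolution integral to the region $\{y:x_iy_i<0\text{ for all }i\}$ where $|x-y|\geq|x|$ gives a bound from below of the form
$$\psi(x,t)\geq c_1\e^{\alpha'|x|(\ln(1+|x|))^{1/2}},$$
uniformly for $t\in[0,T]$, exactly as in the argument leading to \eqref{eq:lower.behavior.psi}. Combined with the upper bound from the preceding Lemma, this two-sided control shows that any $u$ with $|u(x,t)|\leq C(t)\e^{\alpha|x|\ln|x|}$ satisfies $|u|\ll\psi$ locally uniformly on $[0,\infty)$: indeed, $\psi$ dominates through the strict ordering $\alpha<\alpha'<\gamma$ in the relevant Gaussian growth scale, while the solution decays relative to $\psi$ at infinity. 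Theorem \ref{thm:uniqueness} then concludes.

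The main obstacle is the verification that $|u|\ll\psi$ locally uniformly for every candidate solution in the class, since this requires calibrating the growth of the supersolution $\psi$ precisely against the pointwise growth allowed in the uniqueness class and ruling out only the weak inequality. This calibration rests squarely on the sharpness of the two-sided estimates in Proposition \ref{prop:est.omega.gaussian}: the upper bound guarantees that $\psi$ stays in a manageable class so the comparison principle (Proposition \ref{prop:comparison}) can be invoked, while the lower bound guarantees that $\psi$ dominates strictly, leaving the standard margin needed to pass from non-strict to strict comparison. Once this is in place, uniqueness within the stated class follows mechanically from Theorem \ref{thm:uniqueness}, and the existence part has already been produced in ingredient (a).
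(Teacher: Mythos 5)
Your overall strategy is exactly the paper's: the paper proves Theorem \ref{thm:gaussian} by declaring it a direct translation of Theorem \ref{thm:sop.compact.minimal}, with the Gaussian analogue of Lemma \ref{lem:psi} (the supersolution $\psi=A\,\omega(t+1)\ast g_{\beta}$, $\alpha<\beta<\gamma$, with the lower bound in the spirit of \eqref{eq:lower.behavior.psi} coming from Proposition \ref{prop:est.omega.gaussian}) feeding into Theorem \ref{thm:uniqueness}. So ingredients (a), (b), (c) are the right ones and in the right order.

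There is, however, one point where your write-up, read literally, does not close, and it is precisely the point the paper glosses over by saying the adaptation ``consists in changing $\ln|x|$ to $(\ln|x|)^{1/2}$.'' In step (c) you assert that any $u$ with $|u(x,t)|\leq C(t)\e^{\alpha|x|\ln|x|}$ satisfies $|u|\ll\psi$, where your $\psi$ is only bounded below by $c_1\e^{\alpha'|x|(\ln(1+|x|))^{1/2}}$. This is false at the stated scales: $\e^{\alpha|x|\ln|x|}$ grows strictly faster than $\e^{\alpha'|x|(\ln|x|)^{1/2}}$ for every $\alpha,\alpha'>0$, so no supersolution built from the Gaussian kernel can dominate such $u$ (indeed, by Theorem \ref{thm:gaussian.ex}(ii) no nonnegative solution with that growth exists, and your appeal to Theorem \ref{thm:gaussian.ex}(i) in step (a) likewise requires data bounded by $\e^{\alpha|x|(\ln|x|)^{1/2}}$, not $\e^{\alpha|x|\ln|x|}$). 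The uniqueness class in the Gaussian theorem must be read as $|u(x,t)|\leq C(t)\e^{\alpha|x|(\ln(1+|x|))^{1/2}}$ with $0<\alpha<\gamma$ (the $\ln|x|$ in the statement is inherited verbatim from the compactly supported case); once you state the class in this $g_\alpha$-scale, your calibration $\alpha<\alpha'<\gamma$ does give $|u|\ll\psi$ on $\R^N\times[0,T]$ for every $T$, the comparison of Proposition \ref{prop:comparison} applies, and Theorem \ref{thm:uniqueness} finishes the argument exactly as you describe. (Also note that for uniqueness you do not need $\psi(\cdot,0)\geq|u_0|$, only the $\ll$ relation for the two candidate solutions; the choice of $A$ is immaterial there.)
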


\section{Explicit solutions. Asymptotic behaviour}
\label{sect:explicit}
\setcounter{equation}{0}

Let $J$ be a kernel such that
\begin{equation}
\label{eq:j:momento2p}
\gamma_0:=\sup\big\{\gamma>0 :\int J(y)|y|^\gamma \d y<\infty\big\}\in(0,\infty)
\end{equation}
Since $J$ is symmetric it is easy to see that for any integer $p\in (0, \gamma_0)$ the following computation makes sense:
\begin{equation*}
   \begin{aligned}
      J*|x|^{2p}-|x|^{2p}&=\int J(y)(|x-y|^{2p}-|x|^{2p})\, \d y\\
                         &=\sum_{i=0}^{p-1}{{2p}\choose{2i}}|x|^{2i}\int J(y)|y|^{2(p-i)}\,\d y=\sum_{i=0}^{p-1}{{2p}\choose{2i}}m_{2(p-i)}|x|^{2i}.
    \end{aligned}
\end{equation*}
With this expression in mind we prove the following theorem:
\begin{theorem}
  \label{thm:explicit}
  Under hypothesis~\eqref{eq:j:momento2p}, for any integer $p\in (0, \gamma_0)$  the unique solution of~\eqref{eq:0} with initial data $u_0(x)=|x|^{2p}$ has the explicit form
    \begin{equation}
      \label{eq:u:explicit}
      u(x,t)=|x|^{2p}+\sum_{k=1}^p c_k(x)\frac{t^k}{k},
    \end{equation}
    where
    \begin{equation}
      \label{eq:recursive:coef}
        c_1(x)=J*|x|^{2p}-|x|^{2p},\qquad c_k(x)=\frac{1}{(k-1)}\big(J* c_{k-1}(x)-c_{k-1}(x)\big).
    \end{equation}
\end{theorem}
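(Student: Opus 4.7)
The approach is to verify directly that~\eqref{eq:u:explicit} is a classical solution with the prescribed datum, and then to deduce uniqueness from Theorem~\ref{thm:uniqueness}. Writing $Lu:=J\ast u-u$, the key preliminary fact I would establish is a \emph{degree-drop lemma}: if $P$ is a polynomial on $\R^N$ of degree $d$, then $LP$ is a polynomial of degree at most $d-2$. This follows from the multi-index expansion $P(x-y)=\sum_{\beta\le\alpha}\binom{\alpha}{\beta}x^\beta(-y)^{\alpha-\beta}$ combined with the symmetry identity $\int J(y)\,y^\gamma\d y=0$ whenever $|\gamma|$ is odd, which forces the surviving indices in $J\ast P-P$ to satisfy $|\beta|\le|\alpha|-2$. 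All moments entering the computation are of order at most $2p$ and therefore exist by hypothesis~\eqref{eq:j:momento2p}.

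From the recursion~\eqref{eq:recursive:coef} one reads off the closed form $c_k=\frac{1}{(k-1)!}L^k|x|^{2p}$, so that $Lc_k=k\,c_{k+1}$. The degree-drop lemma then yields $\deg c_k\le 2(p-k)$; in particular $c_{p+1}\equiv 0$, which is precisely why the finite sum in~\eqref{eq:u:explicit} can reproduce the full semigroup evolution. Each $c_k$ is then a well-defined polynomial in $x$.

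Setting $u(x,t):=|x|^{2p}+\sum_{k=1}^p c_k(x)\,t^k/k$, a direct differentiation gives
\[
u_t(x,t)=\sum_{k=1}^p c_k(x)\,t^{k-1},\qquad Lu(x,t)=c_1(x)+\sum_{k=1}^p \frac{t^k}{k}\,k\,c_{k+1}(x)=\sum_{k=1}^p c_k(x)\,t^{k-1},
\]
where the last equality uses $c_{p+1}\equiv 0$. Hence $u_t=Lu$, and $u(x,0)=|x|^{2p}$ is immediate. Since $u$, $u_t$ and $J\ast u$ are polynomial in $x$ with polynomial time-dependence, they are all continuous on $\R^N\times[0,\infty)$, so $u$ is a classical solution in the sense of Definition~\ref{def:strong}.

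For uniqueness I would invoke Theorem~\ref{thm:uniqueness}: choosing any $\gamma$ with $2p<\gamma<\gamma_0$, Lemma~\ref{lemma.super.power} supplies the supersolution $\psi(x,t)=\e^{\lambda t}(1+|x|^\gamma)$ satisfying~\eqref{eq:def.psi}. Since $u$ is a polynomial in $x$ of degree exactly $2p$ with time-bounded coefficients, $|u|\ll\psi$ locally uniformly in $[0,\infty)$, and Theorem~\ref{thm:uniqueness} forces $u$ to be the unique solution in that class. The one genuinely delicate step is the degree-drop lemma $\deg(LP)\le\deg(P)-2$; once it is in hand, the termination of the series and the verification $u_t=Lu$ are essentially bookkeeping with the recursion.
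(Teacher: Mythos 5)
Your proposal is correct and follows essentially the same route as the paper: a direct verification that the polynomial ansatz solves the equation via the recursion~\eqref{eq:recursive:coef}, with the sum terminating because each application of $J\ast\cdot-\mathrm{Id}$ drops the polynomial degree by two (the paper encodes this in the displayed moment computation preceding the theorem). You are in fact more explicit than the paper on two points it leaves implicit --- the degree-drop lemma giving $c_{p+1}\equiv 0$, and the uniqueness step via Theorem~\ref{thm:uniqueness} with the barrier of Lemma~\ref{lemma.super.power} --- though note that both your moment bounds and the choice $2p<\gamma<\gamma_0$ tacitly require $2p<\gamma_0$ rather than the stated $p<\gamma_0$, an imprecision already present in the theorem's hypothesis.
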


\begin{proof}
    We first check that $u$ given by~\eqref{eq:u:explicit} is a solution of~\eqref{eq:0} if the coefficients $c_k$ are given recursively by~\eqref{eq:recursive:coef}.
    Indeed,
    \[
    u_t=\sum_{k=1}^p c_k(x)t^{k-1},\qquad\mbox{and}\qquad J*u-u=J*u_0-u_0+\sum_{k=1}^p \frac{t^k}{k}(J*c_k(x)-c_k(x)),
    \]
    form where we get that $u$ satisfies the equation by replacing the coefficients by its recursive expression.
     Notice also that $u(x,0)=u_0(x)$.
\end{proof}

\noindent{\bf Examples.} Consider the initial data $u_0=|x|^2$. The solution $u$ of~\eqref{eq:0} has the explicit form
    \[
    u(x,t)=|x|^2+m_2t.
    \]
For $u_0=|x|^4$ we have
    \[
    u(x,t)=|x|^4+\frac{1}{2}m_2^2t^2+(m_4+2m_3|x|^2)t.
    \]

\begin{remark}
  {\rm It follows form~\eqref{eq:u:explicit} that $u(x,t)=|x|^{2p}+t^p+o(t^p)$ as $t\to\infty$ which is the same relation between space and time as for the heat equation.}
\end{remark}

\section{Extensions and comments}
 \label{sect:extension}
\setcounter{equation}{0}

\subsection{Back to the initial trace}

There is a huge difference between the nonlocal case treated here and the heat equation. Strong nonnegative solutions have an initial trace which is necessarily a locally integrable function, not a general measure. This has to do with the absence of regularization of this nonlocal equation.

However it is not clear what happens for changing sign solutions, since the monotonicity property that was used in Proposition~\ref{prop:trace} is not valid here. In the case of the usual heat equation, it is possible to construct a dipole solution with initial data $u_0=(\delta_0)'$. A similar construction can be done here,  but the associated solution maintains the derivative of the delta measure, as can be easily seen passing to the limit in the explicit formula for the solution. This is a hint that probably the initial trace of strong changing sign solutions is also a locally integrable function, but we are not able to prove it so far.

 \subsection{Singular kernels}\label{subsect:singular}

In \cite{AI}, the authors address the case of the fractional Laplacian, $J(z)=1/|z|^{N+\alpha}$, from the point of view of
unbounded solutions. To be precise, they consider the more general equation $u_t+H(t,x,u,Du)+g[u]=0$.
The Hamiltonian $H$ has to satisfy some structure assumption and the nonlocal term $g[u]$ is given by
\begin{equation}\label{eq:levy}
    g[u](x):=-\int_{\R^N}\Big\{u(x+z)-u(x)-\frac{Du(x)\cdot z}{1+|z|^2}\Big\}\d\mu(z)\,,
\end{equation}
where $\mu$ is a symmetric L\'{e}vy measure with density $J$ satisfying:
$\int \inf(|z|^2,1)J(z)\d z<\infty$.
Of course this equation has to be understood in the sense of viscosity solutions.

We already pointed out that, as far as finding the optimal behaviour of the initial data is
concerned, only the tail of $J$ plays an important role, not its singularity at the origin.
This is reflected in \cite{AI} in the fact that
the integrability condition only concerns the bounded part $\mu_b$ of the L\'evy measure (defined as the
restriction of $\mu$ to the complement of the unit ball). Thus, our various
estimates for the optimal data remain valid even for singular measures.

For instance, if we consider the equation
$$u_t(x,t)=\int_{\R^N}\big\{u(x+z)-u(x)\big\}J(z)\d z\,,$$
where $J$ may be singular at the origin and behaves like a power for large $|z|$,
then existence and uniqueness of solutions is based
on the existence of a function $f$ such that
$$
   \mathcal{L}f(x):=\int_{\R^N}\Big\{f(x+z)-f(x)\Big\}J(z)\d z\leq \lambda f(x)\,.
$$
This is what is done in the construction of barriers in \cite{AI}. Here is also another explicit situation:
following Section \ref{subsect:alpha.stable}, let the kernel $J$ be a tempered $\alpha$-stable law
with $0<\alpha<2$, often used in finance modeling:
$$J(x)=\frac{\e^{-|x|}}{|x|^{N+\alpha}}\,.$$
Then for any $\gamma<1$, the function $f(x)=\e^{\gamma |x|}$ is a barrier. Indeed,
$$
\begin{aligned}
 \mathcal{L}f(x) & =\int_{\R^N}\frac{\e^{\gamma|x+z|}-\e^{\gamma|x|}}{|z|^{N+\alpha}}\e^{-|z|}\d z=
\e^{\gamma |x|}\int_{\R^N} \frac{\e^{\gamma|x+z|-\gamma|x|}-1}{|z|^{N+\alpha}}\e^{-|z|}\d z\\
   &\leq\e^{\gamma |x|}\Big(\int_{\{|z|\leq \delta\}} \frac{\e^{\gamma |z|}-1}{|z|^{N+\alpha}}\e^{-|z|}\d z+
\int_{\{|z|> \delta\}} \frac{\e^{\gamma |z|}-1}{|z|^{N+\alpha}}\e^{-|z|}\d z\Big)
   \\
   &\leq\e^{\gamma |x|}(C_1+C_2)=\lambda f(x)\,.
\end{aligned}
$$

Notice that to give sense to those integrals near $z=0$, either one has to take $\alpha<1$, or to
understand the operator as the principal value of the integral if $1\leq\alpha<2$.

A similar computation shows that for any $0<\alpha'<\alpha$, the function $f(x)=\e^{|x|}(1+|x|)^{\alpha'}$ is also
a barrier. Then, existence and uniqueness results that complement those of \cite{AI}
can be obtained provided the initial data satisfies $|u_0(x)|\leq\e^{|x|}(1+|x|)^{\alpha'}$ for some $0<\alpha'<\alpha$.

In the introduction of \cite{AI}, it is conjectured that it may always be possible to construct some
barrier for any given L\'evy measure. This may be true, however we point out again that
in the case of fast decaying (or compactly supported) kernels, the elliptic barriers do not give the optimal behaviour.

Typically, if we think about the singular measure with density
$$\mu(z)=\frac{\mathbf{1}_{\{|z|<1\}}}{|z|^{N+\alpha}}\,,$$
then the barriers that can be constructed allow to get existence only up to an $\exp{(|x|)}$-behaviour, but they miss
the $\exp(|x|\ln|x|)$-behaviour.

We finish this section by mentioning that a way to get optimal existence for singular measures is to
to approach the singular measure with a monotone sequence of bounded measures. Passing to the limit has to be done in the
viscosity sense of course, and this allows to get existence of a solution for a general Lévy measure $\mu$ provided the initial data
satisfies the estimates for $\mu_b$, the nonsingular part of $\mu$.

\subsection{Large deviations}

In \cite{BrandleChasseigne09-1} and \cite{BrandleChasseigne09-2}, some bounds for
the same nonlocal linear equation were derived, but related to a different problem: estimating the error
when approaching the solution $u$ in all $\R^N$ from solutions $u_R$ of the  Dirichlet problem in
a ball $B_R$, as $R\to\infty$.

The two problems are somehow related in the sense that measuring the difference $(u-u_R)(T)$ amounts to
measuring the total amount of sample paths that can escape the ball $B_R$ between times $0$ and $T$.
And in some sense, this is another way to estimate the nonlocal heat kernel associated to the equation.

Thus, we recover for instance the typical $(\e^{-R\ln R})$-bounds for compactly supported kernels and
$(\e^{-R})$-bounds for exponentially decaying kernels. In the case when $J$ decays at most exponentially,
more bounds are to be found in
\cite{BrandleChasseigne09-1,BrandleChasseigne09-2} which should give the behaviour of the initial trace
for various kernels $J$, according to their decay.

\bigskip

\textbf{Acknowledgements --} The authors would like to warmly thank Emmanuel Lesigne (Tours University, France)
for his help in estimating convolutions of compactly supported kernels.

The authors are supported by project MTM2008-06326-C02-02 (Spain). R. F. is also supported by grant GR58/08-Grupo 920894.



\begin{thebibliography}{aa}

\bibitem{AI} \textsc{N. Alibaud, C. Imbert}, Fractional semi-linear parabolic equations with unbounded data, \textit{Trans. Amer. Math. Soc.}  \textbf{361}  (2009),  no. 5, 2527--2566.

\bibitem{BrandleChasseigne09-1} \textsc{C. Br\"andle, E. Chasseigne},  Large deviations estimates for some non-local equations.
Fast decaying kernels and explicit bounds, \textit{Nonlinear Analysis} \textbf{71} (2009) 5572--5586.

\bibitem{BrandleChasseigne09-2} \textsc{C. Br\"andle, E. Chasseigne},  Large Deviations estimates for some non-local equations. General bounds and applications, preprint  arXiv:0909.1467 (September 2009)

\bibitem{CarrGemanMadanYor2003} \textsc{P. Carr, H. Geman, D.B. Madan, M. Yor}, Stochastic volatility for Lévy processes,
\textit{Math. Finance} \textbf{13} (2003), no. 3, 345--382.

\bibitem{ChasseigneChavesRossi07} \textsc{E. Chasseigne, M. Chaves, J.D. Rossi}, Asymptotic behavior for nonlocal
diffusion equations, \textit{J. Math. Pures Appl.} (9) \textbf{86} (2006), no. 3, 271--291.

\bibitem{ChasseigneFerreira} \textsc{E. Chasseigne, R. Ferreira}, Isothermalization for a Non-local Heat
Equation, preprint  arXiv:0912.3332 (December 2009)

\bibitem{JohnBook} \textsc{F. John}, Partial differential equations. Fourth edition. Applied Mathematical Sciences, 1. Springer-Verlag, New York, 1982.

\end{thebibliography}
\end{document}